\numberwithin{equation}{section}
\theoremstyle{plain}
\newtheorem{theorem}{Theorem}[section]
\newtheorem{proposition}{Proposition}[section]
\newtheorem{lemma}{Lemma}[section]
\theoremstyle{definition}
\newtheorem{assumption}{Assumption}[section]
\newcommand{\e}{\mathrm{e}}
\renewcommand{\Pr}{\operatorname{P}}
\newcommand{\E}{\operatorname{E}}
\renewcommand{\Re}{\operatorname{Re}}
\newcommand{\diff}{\mathrm{d}}
\newcounter{counter:assumption}
\newcounter{counter:notation}
\begin{document}
	
\title[Reducible Markov modulation, pole order, and tail behavior]{Reducible Markov modulation, pole order, and tail behavior in random growth models}

\author{Brendan K.\ Beare}
\address{School of Economics,
	University of Sydney, City Road.
	Camperdown, NSW 2006, Australia.}
\email{brendan.beare@sydney.edu.au}

\author{Alexis Akira Toda}
\address{Department of Economics, 
	Emory University, 1602 Fishburne Drive. 
	Atlanta, GA 30322, USA.}
\email{alexis.akira.toda@emory.edu}

\date{\today}

\maketitle

\begin{center}
	Accepted for publication in the \emph{Journal of Applied Probability}.
\end{center}

\begin{abstract}
Recent work on random growth models with light-tailed Markov-modulated additive shocks has shown that irreducible modulation yields tail behavior resembling an exponential distribution. We show that with reducible modulation the tail behavior more generally resembles an Erlang distribution. Our main technical contribution is a theorem on the order of a real pole of the inverse of a holomorphic matrix-valued function with reducible Metzler structure. In a special affine case, the theorem recovers the Rothblum index theorem. Applying this result together with a Tauberian theorem, we characterize the Erlang shape parameter in two models of Markov-modulated random growth.
\end{abstract}

\section{Introduction}\label{sec:intro}

This article is motivated by recent economic literature concerning a class of Markov modulated random growth models. We refer in particular to the models studied in \cite{BeareSeoToda2022,BeareToda2022}. A precise description is deferred to Section \ref{sec:application}. For now it is enough to understand that the models concern a real-valued process $W=(W_t)$, perhaps representing the logarithm of the wealth of an economic agent, which is subject to light-tailed additive shocks whose distribution depends on a latent finite-state Markov process $J=(J_t)$ called a Markov modulator. A univariate distribution, interpreted as the cross-sectional distribution of log-wealth, is obtained either by stopping $W$ at a random time or by endowing $W$ with a resetting mechanism that produces a unique stationary distribution. A key finding is that, under suitable conditions including the irreducibility of Markov modulation, a random draw $X$ from the distribution of log-wealth satisfies
\begin{equation}\label{eq:exptail}
	0<\liminf_{x\to\infty}\e^{\alpha x}\Pr(X>x)\leq\limsup_{x\to\infty}\e^{\alpha x}\Pr(X>x)<\infty,
\end{equation}
where $\alpha>0$ is determined by structural parameters governing the evolution of wealth and the stopping time or resetting mechanism. The upper tail of the distribution of log-wealth thus resembles an exponential distribution. An analogous result can be obtained for the lower tail.
	
A simple example not involving Markov modulation is when $W$ is a Brownian motion stopped at an exponentially distributed time $T$ independent of $W$. In this case, studied in \cite{Reed2001} and used to explain power laws in city sizes and incomes, the distribution of $X\coloneqq W_T$ is Laplace and therefore has exponential tails. See also \cite{BeareToda2020} for a related application to the spread of coronavirus disease. Applications of \eqref{eq:exptail} where Markov modulation plays an important role include the analyses of wealth inequality in \cite{GomezGouinBonenfant2024,GouinBonenfant2022,GouinBonenfantToda2023}, and of taxation policy in \cite{BeareToda2025}.
	
This article makes two contributions. The first is a general matrix-analytic result. We study holomorphic matrix-valued functions $A(z)$ that are Metzler (i.e., real with nonnegative off-diagonal entries) when $z$ is real. Theorem \ref{thm:main}, stated in Section \ref{sec:mainresult}, gives conditions under which a real pole $\alpha$ of $A(z)^{-1}$ has order equal to the length of the longest chain of classes of $A(\alpha)$. When $A(\alpha)$ is irreducible the pole is simple. This case, treated in \cite{BeareSeoToda2022,BeareToda2022}, is discussed in Section \ref{sec:irreducible}. Our contribution is to treat the reducible case. When $A(z)$ takes the special affine form $A(z)=B-zI$ our result recovers the Rothblum index theorem \cite{Rothblum1975}, discussed in Section \ref{sec:Rothblum}. Graph-theoretic terminology, including classes, chains of classes, and their lengths, is reviewed in Section \ref{sec:terminology}.
	
Our second contribution is to apply this general theorem to the random growth models studied in \cite{BeareSeoToda2022,BeareToda2022}. This is done in Section \ref{sec:application}. We show that if Markov modulation is reducible then the upper tail of $X$ need not resemble an exponential distribution in the sense of \eqref{eq:exptail}, but will nevertheless resemble an Erlang distribution. The Erlang distribution with shape parameter $d\in\mathbb N$ and rate parameter $\alpha\in(0,\infty)$ is the distribution of the sum of $d$ independent exponentially distributed random variables, each with rate parameter $\alpha$. The corresponding probability density function is given for $x\geq0$ by
\begin{equation*}
	f_{d,\alpha}(x)=\frac{\alpha^d}{(d-1)!}x^{d-1}\e^{-\alpha x}.
\end{equation*}
By repeated integration by parts we obtain, for $x\geq0$,
\begin{equation*}
	\int_x^\infty f_{d,\alpha}(y)\diff y=\sum_{k=0}^{d-1}\frac{(\alpha x)^k}{k!}\e^{-\alpha x}.
\end{equation*}
Thus if $X$ has the Erlang distribution with parameters $d$ and $\alpha$ then
\begin{equation*}
	\lim_{x\to\infty}x^{-d+1}\e^{\alpha x}\Pr(X>x)=\frac{\alpha^{d-1}}{(d-1)!}\in(0,\infty).
\end{equation*}
Consequently, if $d>1$ then $X$ does \emph{not} satisfy \eqref{eq:exptail}, but instead satisfies
\begin{equation}\label{eq:Erlangtail}
	0<\liminf_{x\to\infty}x^{-d+1}\e^{\alpha x}\Pr(X>x)\leq\limsup_{x\to\infty}x^{-d+1}\e^{\alpha x}\Pr(X>x)<\infty,
\end{equation}
with equality of the limits inferior and superior. We show that the random growth models in \cite{BeareSeoToda2022,BeareToda2022} generate tail behavior resembling an Erlang distribution in the sense of \eqref{eq:Erlangtail} irrespective of whether Markov modulation is irreducible. The parameter $\alpha$ is determined in the same way as in the irreducible case, while $d$ is equal to the length of the longest chain of classes of a Metzler matrix determined by structural parameters.
	
The distinction between \eqref{eq:exptail} and \eqref{eq:Erlangtail} is somewhat subtle. If a random variable $X$ satisfies \eqref{eq:Erlangtail} then it must also satisfy
\begin{equation}\label{eq:weakexptail}
	\lim_{x\to\infty}\frac{1}{x}\log\Pr(X>x)=-\alpha.
\end{equation}
In this sense, which is weaker than the one in \eqref{eq:exptail}, the upper tail of the distribution of $X$ resembles an exponential distribution. Our results therefore demonstrate that the assumption of irreducible Markov modulation in \cite{BeareSeoToda2022,BeareToda2022} can be dropped if one seeks only to establish exponential tail behavior in the weak sense of \eqref{eq:weakexptail}.

Laplace transforms will play a central role in our analysis. The Laplace transform for a real random variable $X$ is a map $\varphi$ from a set $\Omega\subseteq\mathbb C$ into $\mathbb C$. The set $\Omega$, called the \emph{domain} or \emph{strip of convergence} of the Laplace transform, is defined by
\begin{equation*}
	\Omega\coloneqq\big\{z\in\mathbb C:\E\big(\e^{\Re(z)X}\big)<\infty\big\},
\end{equation*}
and the Laplace transform $\varphi:\Omega\to\mathbb C$ is defined by
\begin{equation*}
	\varphi(z)=\E(\e^{zX})=\int_{-\infty}^\infty\e^{zx}\,\mathrm{d}\Pr(X\leq x).
\end{equation*}
The set of all real elements of $\Omega$ always forms a convex subset of the real line containing zero. The set of all complex numbers whose real part belongs to this convex set is precisely $\Omega$. The right and left endpoints of the convex set are $\alpha$ and $-\beta$ respectively, where we define
\begin{equation*}
	\alpha\coloneqq\sup\big\{s\in\mathbb R:\E\big(\e^{sX}\big)<\infty\big\}\quad\text{and}\quad\beta\coloneqq-\inf\big\{s\in\mathbb R:\E\big(\e^{sX}\big)<\infty\big\}.
\end{equation*}
We call $\alpha$ and $-\beta$ the \emph{right} and \emph{left abscissae of convergence} of the Laplace transform for $X$. We will be concerned with cases where $\alpha>0$ or $\beta>0$. In such cases $\Omega$ has nonempty interior, which we denote by $\Omega^\circ$. We denote by $\varphi^\circ$ the restriction of $\varphi$ to $\Omega^\circ$. It can be shown by applying the dominated convergence theorem that $\varphi^\circ$ is holomorphic. See \cite{Widder1941} for further discussion of Laplace transforms, noting that we are concerned with the \emph{bilateral} Laplace transform defined therein.

It is straightforward to show that if $X$ is a random variable with the Erlang distribution with parameters $d$ and $\alpha$ then the Laplace transform for $X$ has open domain $\Omega_\alpha=\{z\in\mathbb C:\Re(z)<\alpha\}$ and is given on this domain by
\begin{equation*}
	\varphi_{d,\alpha}(z)=\left(\frac{\alpha}{\alpha-z}\right)^d.
\end{equation*}
The last formula defines a meromorphic extension of $\varphi_{d,\alpha}$ to all of $\mathbb C$. The right abscissa of convergence $\alpha$ is a pole of order $d$ of this extension. It is natural to wonder whether it is true in general that a distribution whose Laplace transform admits a meromorphic extension with a pole at the right abscissa of convergence will have an Erlang-like upper tail in the sense of \eqref{eq:Erlangtail}. The following result establishes that this is indeed the case.

\begin{theorem}[Graham-Vaaler-Nakagawa]\label{thm:tauberian}
	Let $X$ be a real random variable and $\varphi$ its Laplace transform, with right abscissa of convergence $\alpha\in(0,\infty)$. Suppose that $\varphi^\circ$ can be meromorphically extended to an open set containing $\alpha$ in such a way that $\alpha$ is a pole of this extension. Let $d$ be the order of the pole. Then $X$ satisfies \eqref{eq:Erlangtail}.
\end{theorem}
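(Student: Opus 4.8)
The plan is to pass from the two-sided transform $\varphi$ to the ordinary Laplace transform of the survival function, establish that it has a pole of order $d$, and then invoke a complex Tauberian theorem. Write $\bar F(x)=\Pr(X>x)$, which is non-increasing, and set $\psi(z)=\int_0^\infty e^{zx}\bar F(x)\,\diff x$; since $\E(e^{sX})<\infty$ for every $s<\alpha$, Markov's inequality forces $\bar F(x)$ to decay faster than $e^{-sx}$ for each such $s$, so this integral converges and is holomorphic on $\{\Re z<\alpha\}$. Integration by parts yields $\varphi(z)=z\psi(z)+\chi(z)$, where $\chi(z)$ (a constant plus $\int_{(-\infty,0]}e^{zx}\,\diff F(x)$) is holomorphic in a neighbourhood of $\alpha$; since $z$ does not vanish at $\alpha$, $\psi$ inherits from $\varphi$ a pole of order $d$ there. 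Substituting $z=\alpha-w$ and writing $g(x)=e^{\alpha x}\bar F(x)$, the function $h(w):=\psi(\alpha-w)=\int_0^\infty e^{-wx}g(x)\,\diff x$ is holomorphic on $\{\Re w>0\}$ with a pole of order $d$ at the origin; denote its principal part there by $\sum_{k=1}^d c_k w^{-k}$. Because $g\geq0$ we have $h(w)\geq0$ for real $w>0$, and since $h(w)\sim c_d w^{-d}$ as $w\downarrow0$ the leading coefficient $c_d$ is strictly positive. Note that \eqref{eq:Erlangtail} is equivalent to $0<\liminf_{x\to\infty}x^{-d+1}g(x)\leq\limsup_{x\to\infty}x^{-d+1}g(x)<\infty$.

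A crude bound is immediate: $g\geq0$ and $x\mapsto g(x)e^{-\alpha x}=\bar F(x)$ is non-increasing, so the Karamata Tauberian theorem applied to $h(w)\sim c_d w^{-d}$ gives $\int_0^x g\sim\frac{c_d}{d!}x^d$, whence \eqref{eq:weakexptail} and, using the monotonicity of $\bar F$ once more, $g(x)=O(x^d)$. Closing the gap between this $x^d$ and the sharp $x^{d-1}$ demanded by \eqref{eq:Erlangtail} is the real content, and it requires the holomorphy of $h$ (equivalently of $\varphi$) on a full complex neighbourhood of the origin (of $\alpha$), not merely along the positive real axis.

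To use that holomorphy I would subtract the anticipated main term. Put $P(x)=\sum_{k=1}^d\frac{c_k}{(k-1)!}x^{k-1}$, a polynomial of degree $d-1$ with leading coefficient $c_d/(d-1)!>0$, normalised so that $\int_0^\infty e^{-wx}P(x)\,\diff x$ equals exactly the principal part $\sum_{k=1}^d c_k w^{-k}$ of $h$; then $H(w):=h(w)-\sum_{k=1}^d c_k w^{-k}=\int_0^\infty e^{-wx}(g(x)-P(x))\,\diff x$ extends holomorphically from $\{\Re w>0\}$ to a neighbourhood of $0$. One now wants two-sided control of $g-P$, hence of $g$. Because the continuation is only local, one cannot hope for $g(x)-P(x)=o(x^{d-1})$: that would force a genuine limit in \eqref{eq:Erlangtail}, which is false in general, since $\varphi$ may have further poles on the line $\Re z=\alpha$ outside the neighbourhood and these produce honest oscillation of $x^{-d+1}e^{\alpha x}\bar F(x)$ --- exactly the reason the statement is phrased with $\liminf$ and $\limsup$. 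Instead I would run a one-sided argument of Beurling--Selberg type, following Graham and Vaaler: majorise and minorise the relevant exponentially tilted indicator by entire functions whose Fourier transforms are supported in an interval short enough that the Laplace inversion contour can be deformed, staying inside the local domain of holomorphy of $\varphi$, across the pole at $\alpha$. The residue at $\alpha$ contributes $\frac{c_d}{(d-1)!}x^{d-1}(1+o(1))$ to both the resulting majorising and minorising bounds, while the remaining deformed contour pieces lie where $\varphi$ is holomorphic and contribute $o(x^{d-1})e^{-\alpha x}$ by Riemann--Lebesgue estimates and the decay of the extremal functions. Using the monotonicity of $\bar F$ to transfer these smoothed estimates back to $\bar F$ itself, with a multiplicative loss that tends to $1$ as the band-limit is relaxed, then yields $0<\liminf_{x\to\infty}x^{-d+1}e^{\alpha x}\bar F(x)$ and $\limsup_{x\to\infty}x^{-d+1}e^{\alpha x}\bar F(x)<\infty$, i.e.\ \eqref{eq:Erlangtail}.

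I expect the complex Tauberian step to be the main obstacle. Under the purely local hypothesis there is no control of $\varphi$ on the part of the line $\Re z=\alpha$ away from $\alpha$, so a direct contour shift of the entire inversion integral is unavailable and one is driven to the extremal-function machinery, trading an exact asymptotic for the two-sided bounds of \eqref{eq:Erlangtail}; the delicate points are calibrating the supports of the Graham--Vaaler majorant and minorant against a domain of holomorphy that may be small, and organising the error terms so that precisely the power $x^{d-1}$ survives rather than the $x^d$ of the naive real-variable estimate. Alternatively the result can be quoted directly from Nakagawa, whose argument follows this template.
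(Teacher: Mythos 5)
The paper offers no proof of this theorem: it is quoted from Graham--Vaaler (the case $d=1$) and Nakagawa (general $d$), precisely the sources your sketch follows and ultimately defers to. Your outline --- passing to the Laplace transform of $e^{\alpha x}\Pr(X>x)$, subtracting the polynomial matching the principal part of the pole, and compensating for the merely local holomorphy via Beurling--Selberg/Graham--Vaaler band-limited majorants and minorants, which is exactly why only $\liminf$/$\limsup$ bounds survive --- is a faithful description of that cited argument, so your proposal takes essentially the same approach as the paper.
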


Theorem \ref{thm:tauberian} was proved by Graham and Vaaler \cite{GrahamVaaler1981} for the case where $X$ is nonnegative and $d=1$. See \cite[pp.~128-33]{Korevaar2004} for a very helpful discussion of this case. The arguments given in \cite{GrahamVaaler1981} were adapted to arbitrary $d\in\mathbb N$ by Nakagawa \cite{Nakagawa2007}. See, in particular, Theorem 5* therein, and see also \cite{Nakagawa2004,Nakagawa2005,Nakagawa2015} for closely related results. Explicit sharp bounds for the limits inferior and superior in \eqref{eq:Erlangtail} are provided in \cite{GrahamVaaler1981,Nakagawa2007}, but these are complicated in form for general $d$ and are not required for the present discussion. For $d=1$, Theorem \ref{thm:tauberian} is sometimes called a finite form of the Wiener-Ikehara Tauberian theorem. The latter result supplies the stronger conclusion that $\e^{\alpha x}\Pr(X>x)$ converges to a positive and finite limit as $x\to\infty$, but requires that no point other than $\alpha$ on the axis $\{z\in\mathbb C:\Re(z)=\alpha\}$ is a pole of $\varphi$, thus ruling out simple examples such as the geometric distribution. See \cite{Korevaar2004} for further discussion.

The preceding references demonstrate the validity of Theorem \ref{thm:tauberian} for nonnegative $X$. The generalization to real $X$ is straightforward. Let $X$ be a real random variable whose Laplace transform has right abscissa of convergence $\alpha\in(0,\infty)$, and let $X_+=\max\{X,0\}$. Then the Laplace transform for $X_+$ also has right abscissa of convergence $\alpha$. For each $z\in\Omega^\circ$ the law of total probability shows that
\begin{equation*}
	\varphi^\circ(z)=\E\big(\e^{zX}\big)=\E\big(\e^{zX_+}\big)+\Pr(X\leq0)\big[\E\big(\e^{zX}\mid X\leq0\big)-1\big].
\end{equation*}
The final term can be holomorphically extended to an open set containing $\alpha$ because $\E(\e^{zX}\mid X\leq 0)$ is the Laplace transform for a nonpositive random variable, and any such Laplace transform is holomorphic on $\{z\in\mathbb C:\Re(z)>0\}$. Thus if $\varphi^\circ$ admits a meromorphic extension with a pole of order $d$ at $\alpha$ then the same is true of the Laplace transform for $X_+$, and consequently \eqref{eq:Erlangtail} follows from an application of Theorem \ref{thm:tauberian} using the nonnegative random variable $X_+$. Note that $\Pr(X>x)=\Pr(X_+>x)$ for all $x>0$.

The relevance of Theorem \ref{thm:tauberian} to the random growth models studied in \cite{BeareSeoToda2022,BeareToda2022} comes from the fact that the tail probabilities of interest are associated with Laplace transforms of the form
\begin{equation}\label{eq:Laplaceform}
	\varphi(z)=-v^\top A(z)^{-1}w,
\end{equation}
where $v$ and $w$ are semipositive vectors and $A(z)$ is Metzler when $z$ is real. In the irreducible case it is shown in \cite{BeareSeoToda2022,BeareToda2022}, by applying the Perron--Frobenius theorem and Keldysh's theorem for simple eigenvalues, that the relevant singularity $\alpha$ of $A(z)^{-1}$ is a simple pole with a positive residue, allowing Theorem \ref{thm:tauberian} to be applied with $d=1$. See Section \ref{sec:irreducible} for details. Theorem \ref{thm:main} facilitates the extension of this argument to the reducible case by determining the order of the pole of $A(z)^{-1}$ at $\alpha$ from the directed class structure of $A(\alpha)$; part (b) of the theorem then identifies the order of the corresponding pole of $v^\top A(z)^{-1}w$ relevant for the Laplace transform.
	
Our results are related to the literature on phase-type distributions initiated by Neuts \cite{Neuts1975}; see also \cite{Neuts1981}. Phase-type distributions are the distributions of absorption times for finite-state continuous-time Markov chains. Erlang distributions form a basic subclass. Every phase-type distribution is a mixture of a point mass at zero and a continuous distribution on $(0,\infty)$ whose Laplace transform takes the form \eqref{eq:Laplaceform} with $A(z)=B-zI$, where $B$ is called the phase-type generator \cite[pp.~2--3]{OCinneide1990}. It is proved in \cite{OCinneide1990} that a nondegenerate distribution on $[0,\infty)$ with rational Laplace transform is phase-type if and only if it has a continuous positive density on $(0,\infty)$ and its Laplace transform has a unique pole of maximal real part. See also \cite{OCinneide1991} for a result concerning the extremal role of the Erlang distribution within the phase-type class.
	
Especially relevant in this context is \cite{FackrellHeTaylorZhang2010}, which uses Perron--Frobenius theory and the Rothblum index theorem to bound the minimum algebraic degree of irreducible representations of phase-type distributions. The relation to the present article is that both works connect reducible matrix structure with pole order and Erlang-type behavior. However, we are concerned here not with the absorption time itself, but rather with a size variable, e.g.\ log-wealth, evaluated at a stopping or reset time. Only in special affine cases can the associated Laplace transform fall within the phase-type class. In general, the holomorphic matrix-valued function $A(z)$ need not be affine, and the corresponding Laplace transform need not be rational. This is precisely what necessitates extending the Rothblum index theorem from the affine setting $A(z)=B-zI$ to the more general holomorphic matrix-valued setting studied here.

\section{The irreducible case}\label{sec:irreducible}

We now explain the core argument used in \cite{BeareSeoToda2022,BeareToda2022} to establish, under an irreducibility condition, that the right abscissa of convergence of a Laplace transform of the form \eqref{eq:Laplaceform} is a simple pole. The purpose is to set the scene for our extension to the reducible case in Section \ref{sec:reducible}.

First we lay out some terminology and notation. We say that a real matrix $A$ is
\begin{enumerate}[label=\upshape(\alph*)]
	\item \emph{nonnegative}, and write $A\geq0$, if all of the entries of $A$ are nonnegative;
	\item \emph{semipositive}, and write $A>0$, if $A\geq0$ and at least one entry of $A$ is positive;
	\item \emph{positive}, and write $A\gg0$, if all of the entries of $A$ are positive;
	\item \emph{Metzler} if $A$ is square and all of the off-diagonal entries of $A$ are nonnegative.
\end{enumerate}
When any of these terms is applied to a complex matrix it should be understood that all entries of the matrix are real. We use the symbols $\leq$, $<$ and $\ll$ analogously.

Every $N\times N$ matrix $A$ generates a directed graph $G(A)=(V(A),E(A))$ in the following way. The set of vertices $V(A)$ is $\{1,\dots,N\}$. The set of directed edges $E(A)$ is comprised of all ordered pairs $(m,n)\in V(A)^2$ with $m\neq n$ such that the $(m,n)$-entry of $A$ is nonzero. An $\ell$-tuple of directed edges $((m_1,n_1),\dots,(m_\ell,n_\ell))\in E(A)^\ell$ is called a path from $m_1$ to $n_\ell$ if $\ell=1$ or if $n_k=m_{k+1}$ for all $k\in\{1,\dots,\ell-1\}$. We say that the matrix $A$ is \emph{irreducible} if, for every ordered pair $(m,n)\in V(A)^2$ with $m\neq n$, there exists a path of directed edges in $E(A)$ from $m$ to $n$. Otherwise we say that $A$ is \emph{reducible}. Note that the diagonal entries of $A$ do not affect $G(A)$ and are therefore irrelevant for determining whether $A$ is reducible or irreducible.

The \emph{spectral abscissa} of a complex square matrix $A$ is the maximum of the real parts of all eigenvalues of $A$. The \emph{spectral radius} of a complex square matrix $A$ is the maximum of the absolute values of all eigenvalues of $A$. We denote the spectral abscissa by $\zeta(A)$ and the spectral radius by $\rho(A)$.

As discussed in ch.~1 of \cite{GohbergLeiterer2009}, many of the basic definitions and facts concerning functions of a complex variable taking values in $\mathbb C$ extend in a natural way to functions taking values in $\mathbb C^{N\times N}$ or, more generally, taking values in a complex Banach space. For our purposes it is enough to understand that an $N\times N$ complex matrix-valued function $A(z)$ is \emph{holomorphic} on an open set $\Omega\subseteq\mathbb C$ if each of its entries $A_{m,n}(z)$ is holomorphic on $\Omega$; is \emph{meromorphic} on $\Omega$ if each of its entries is meromorphic on $\Omega$; and that a point $z_0$ in an open set on which $A(z)$ is meromorphic is a \emph{pole of order} $d$ of $A(z)$ if $d$ is the highest order of any pole at $z_0$ among the entries of $A(z)$, with at least one entry having a pole of order $d$. A pole of order one is called a \emph{simple} pole. If $A(z)$ is holomorphic on a neighborhood of a point $z_0\in\mathbb C$ then we write $A'(z_0)$ for the matrix of entry-wise complex derivatives at $z_0$. See \cite{GohbergLeiterer2009} for a more detailed discussion.

The following result is the main tool we need to handle cases where Markov modulation is irreducible. It will be extended in Section \ref{sec:reducible} to cover reducible cases.

\begin{proposition}\label{thm:irreducible}
	Let $\Omega$ be an open and connected subset of the complex plane. Let $N$ be a natural number. Let $A:\Omega\to\mathbb C^{N\times N}$ be a holomorphic matrix-valued function, nonsingular somewhere on $\Omega$. Then $A(z)^{-1}$ is meromorphic on $\Omega$. Let $\alpha$ be a real element of $\Omega$. Assume:
	\begin{enumerate}[label=\upshape(\roman*)]
		\item $A(s)$ is Metzler for all real $s\in\Omega$. \label{en:Metzler}
		\item $\zeta(A(\alpha))=0$. \label{en:zero}
		\item $A(s)$ is irreducible for all real $s\in\Omega$.
		\item $\zeta(A(s))$ has a nonzero left- or right-derivative at $\alpha$ as a function of real $s\in\Omega$. \label{en:neg}\setcounter{counter:notation}{\value{enumi}}
	\end{enumerate}
	Then $\alpha$ is a simple pole of $A(z)^{-1}$. Moreover, $\zeta(A(s))$ is differentiable at $\alpha$ as a function of real $s\in\Omega$, and we have $\lim_{z\to\alpha}(z-\alpha)A(z)^{-1}\gg0$ if the derivative at $\alpha$ is positive, or $\lim_{z\to\alpha}(z-\alpha)A(z)^{-1}\ll0$ if the derivative at $\alpha$ is negative.
\end{proposition}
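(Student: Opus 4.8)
The plan is to reduce everything to the classical Perron--Frobenius theory for the irreducible Metzler matrix $A(\alpha)$, then transport the resulting spectral information to the meromorphic inverse $A(z)^{-1}$ via the Laurent expansion. First I would record the basic facts about $A(z)^{-1}$: since $A$ is holomorphic and nonsingular somewhere on the connected set $\Omega$, the determinant $\det A(z)$ is a nonzero holomorphic function, hence its zeros are isolated, and Cramer's rule shows $A(z)^{-1}$ is meromorphic with poles confined to the zero set of $\det A(z)$. In particular $\alpha$ is either a regular point or a pole of $A(z)^{-1}$, of some finite order.

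Next I would extract the Perron--Frobenius structure at $\alpha$. Because $A(\alpha)$ is Metzler and irreducible with $\zeta(A(\alpha))=0$, the eigenvalue $0$ is the Perron root: it is algebraically (hence geometrically) simple, and there exist strictly positive left and right eigenvectors $u,w\gg 0$ with $u^\top A(\alpha)=0$ and $A(\alpha)w=0$, normalized so that $u^\top w=1$. Writing $P=wu^\top\gg 0$ for the corresponding spectral (Riesz) projection onto $\ker A(\alpha)$, algebraic simplicity of the eigenvalue $0$ gives the standard fact that $A(\alpha)$ restricted to the complementary invariant subspace $\operatorname{ran}(I-P)$ is invertible; let $D$ denote its group inverse (the reduced resolvent), so that $A(\alpha)D=DA(\alpha)=I-P$ and $DP=PD=0$. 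These are the ingredients needed to analyze the Laurent coefficients.

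Now I would show the pole is simple and identify the residue. Write the Laurent expansion $A(z)^{-1}=\sum_{k\ge -m}(z-\alpha)^k C_k$ near $\alpha$, with $C_{-m}\ne 0$ if $m\ge 1$. Multiplying the identity $A(z)A(z)^{-1}=I$ and expanding $A(z)=A(\alpha)+(z-\alpha)A'(\alpha)+\cdots$ in powers of $(z-\alpha)$, the lowest-order terms yield $A(\alpha)C_{-m}=0$ when $m\ge 2$, forcing $C_{-m}=P X$ for some matrix $X$ (its columns lie in $\ker A(\alpha)$), and then the next order gives $A(\alpha)C_{-m+1}+A'(\alpha)C_{-m}=0$; premultiplying by $u^\top$ and using $u^\top A(\alpha)=0$ gives $u^\top A'(\alpha)C_{-m}=0$, i.e. $(u^\top A'(\alpha)w)\,u^\top X=0$. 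This is where I would invoke hypothesis~\ref{en:neg}: by the standard first-order perturbation formula for a simple eigenvalue, $\zeta(A(s))$ is differentiable at $\alpha$ with derivative $u^\top A'(\alpha)w$, which by assumption is nonzero. Hence $u^\top X=0$, so $C_{-m}=wu^\top X=0$, contradicting $C_{-m}\ne 0$ unless $m\le 1$; this proves $\alpha$ is at worst a simple pole, and the hypothesis that the singularity is genuine (the derivative being nonzero will in fact force $\det A$ to vanish at $\alpha$ to first order) gives $m=1$. Finally, from $A(\alpha)C_{-1}=0$ and $A(\alpha)C_{0}+A'(\alpha)C_{-1}=I$, the same argument gives $C_{-1}=wu^\top X$ with $u^\top A'(\alpha)w\cdot u^\top X=u^\top$, so $C_{-1}=\dfrac{wu^\top}{u^\top A'(\alpha)w}=\dfrac{P}{\zeta'(\alpha)}$. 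Since $P=wu^\top\gg 0$, the residue $C_{-1}$ is $\gg 0$ when $\zeta'(\alpha)>0$ and $\ll 0$ when $\zeta'(\alpha)<0$, which is the claim.

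The main obstacle I anticipate is bookkeeping the Laurent-coefficient recursion cleanly enough to conclude $m\le 1$ without separately treating each order — the key algebraic leverage is always the same (left-multiply by $u^\top$, kill the $A(\alpha)$ term, and exploit the nonvanishing scalar $u^\top A'(\alpha)w$), but stating it once as a lemma about Laurent expansions of $A(z)^{-1}$ at a point where $A(\alpha)$ has an algebraically simple zero eigenvalue will streamline the argument. A secondary point requiring a little care is the connection between hypothesis~\ref{en:neg} (merely a one-sided derivative of $\zeta(A(s))$ along the real axis) and the analytic perturbation statement: since $0$ is an algebraically simple eigenvalue of $A(\alpha)$, analytic perturbation theory guarantees a genuine two-sided analytic branch of eigenvalues through $0$, so the one-sided derivative automatically coincides with the two-sided derivative $u^\top A'(\alpha)w$, upgrading \ref{en:neg} to full differentiability as asserted in the conclusion.
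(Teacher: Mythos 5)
Your argument is correct in substance but follows a genuinely different route from the paper. Where the paper cites Keldysh's theorem for simple eigenvalues (Lemma \ref{lem:Keldysh}) to obtain the simple pole and the residue formula, you re-derive that content by hand from the Laurent recursion for $A(z)A(z)^{-1}=I$: the step ``$A(\alpha)C_{-m}=0$ forces $C_{-m}=wc^\top$, then premultiply the next coefficient identity by $u^\top$ and use $u^\top A'(\alpha)w\neq0$'' is exactly the mechanism behind Keldysh's theorem, and your computation of $C_{-1}=(u^\top A'(\alpha)w)^{-1}wu^\top$ reproduces the paper's residue, so the sign conclusions follow identically from $wu^\top\gg0$. (Note that $m\geq1$ needs no appeal to hypothesis \ref{en:neg} or to first-order vanishing of $\det A$: since $0$ is an eigenvalue of $A(\alpha)$, the matrix $A(\alpha)$ is singular, and a removable singularity of $A(z)^{-1}$ at $\alpha$ would give a two-sided inverse of $A(\alpha)$ in the limit.) The second difference is how differentiability of $\zeta(A(s))$ at $\alpha$ and the formula $\zeta'(\alpha)=u^\top A'(\alpha)w$ are obtained: the paper runs a difference-quotient argument using continuity of the Perron vector (Lemma \ref{lem:Ortega}), whereas you invoke analytic perturbation theory for the algebraically simple eigenvalue $0$. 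Your route is fine, but it has one step you should make explicit: hypothesis \ref{en:neg} concerns $\zeta(A(s))$, while the analytic branch $\lambda(s)$ with $\lambda(\alpha)=0$ and $\lambda'(\alpha)=u^\top A'(\alpha)w$ is a priori just some eigenvalue of $A(s)$; you must identify $\zeta(A(s))=\lambda(s)$ for real $s$ near $\alpha$. This follows because $0$ is an isolated simple eigenvalue of $A(\alpha)$, so for $s$ close to $\alpha$ the only eigenvalue of $A(s)$ in a small disk about $0$ is $\lambda(s)$, while the Perron root $\zeta(A(s))$ of the irreducible Metzler matrix $A(s)$ is an eigenvalue converging to $\zeta(A(\alpha))=0$ by continuity of the spectral abscissa, hence must coincide with $\lambda(s)$ (and in particular $\lambda(s)$ is real). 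With that sentence added, your argument upgrades the assumed one-sided derivative to the two-sided derivative $u^\top A'(\alpha)w\neq0$, and the proof is complete. The trade-off between the two approaches: yours is self-contained at the level of Laurent coefficients and standard perturbation theory, while the paper's leans on the cited Keldysh and Ortega results and thereby avoids any discussion of spectral projections, group inverses, or eigenvalue branches.
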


Proposition \ref{thm:irreducible} is partly contained in the proofs of Theorem 3.1 in \cite{BeareSeoToda2022} and Theorem 3 in \cite{BeareToda2022}. However, the arguments given there contained a flaw, corrected in \cite{BeareSeoToda2025}. We provide here a complete proof of Proposition \ref{thm:irreducible}.

The first part of Proposition \ref{thm:irreducible}, i.e.\ the assertion that $A(z)^{-1}$ is meromorphic when $A(z)$ is holomorphic and somewhere nonsingular, is well-known. See, for instance, Theorem 1 in \cite{Steinberg1968}, which deals with a more general Banach space setting. To establish the remainder of Proposition \ref{thm:irreducible} we require three ingredients from the mathematical literature. The first is a version of the Perron-Frobenius theorem, which is most commonly stated for nonnegative square matrices but extends readily to Metzler matrices as follows.

\begin{lemma}[Perron-Frobenius]\label{lem:PF}
	Let $A$ be a Metzler matrix. Then $\zeta(A)$ is an eigenvalue of $A$, and there are nonnegative left and right eigenvectors associated with this eigenvalue. If $A$ is irreducible, then $\zeta(A)$ is an algebraically simple eigenvalue of $A$, and there are positive left and right eigenvectors associated with this eigenvalue. If $A$ is nonnegative, then $\zeta(A)=\rho(A)$.
\end{lemma}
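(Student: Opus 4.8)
The plan is to deduce the lemma from the classical Perron-Frobenius theorem for nonnegative matrices by a diagonal shift. Write $N$ for the size of $A$ and pick a real number $c>0$ large enough that $B\coloneqq A+cI$ is nonnegative; this is possible because the off-diagonal entries of the Metzler matrix $A$ are already nonnegative, so it suffices to take $c$ larger than $-a_{ii}$ for every diagonal entry $a_{ii}$ of $A$. The shift $A\mapsto A+cI$ does not touch the off-diagonal entries, so $G(B)=G(A)$; in particular $B$ is irreducible if and only if $A$ is. It also preserves eigenvectors and algebraic multiplicities while translating the spectrum: $\lambda$ is an eigenvalue of $A$ if and only if $\lambda+c$ is an eigenvalue of $B$, with the same left and right eigenspaces and the same algebraic multiplicity. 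Hence $\zeta(B)=\zeta(A)+c$.

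Next I would apply the classical Perron-Frobenius theorem to the nonnegative matrix $B$. In its general (not necessarily irreducible) form it gives that $\rho(B)$ is an eigenvalue of $B$ possessing nonnegative left and right eigenvectors. Since every eigenvalue of a nonnegative matrix has modulus at most $\rho(B)$ and $\rho(B)$ is itself a real eigenvalue, we get $\zeta(B)=\rho(B)$. Translating back through the shift, $\zeta(A)=\zeta(B)-c=\rho(B)-c$ is an eigenvalue of $A$, and the nonnegative Perron eigenvectors of $B$ are nonnegative eigenvectors of $A$ associated with $\zeta(A)$. If in addition $A$ is irreducible then so is $B$, and the Perron-Frobenius theorem for irreducible nonnegative matrices further gives that $\rho(B)$ is an algebraically simple eigenvalue of $B$ with strictly positive left and right eigenvectors; because the shift preserves algebraic multiplicity and eigenvectors, the same conclusions hold for $\zeta(A)$ as an eigenvalue of $A$.

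For the final claim, suppose $A$ itself is nonnegative. Then either one may take $c=0$ above, or apply the classical theorem directly to $A$: $\rho(A)$ is an eigenvalue, so $\zeta(A)\geq\rho(A)$, while $\zeta(A)\leq\rho(A)$ holds for every square matrix; hence $\zeta(A)=\rho(A)$.

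There is no real obstacle in this argument, since its content is entirely classical. The only point needing care is the bookkeeping around the shift $A\mapsto A+cI$: one must check that it leaves the directed graph, the eigenvectors, and the algebraic multiplicities unchanged, so that the irreducible-case conclusions, namely algebraic simplicity of $\zeta(A)$ and strict positivity of its eigenvectors, genuinely descend from $B$ back to $A$. It is also worth being explicit about which formulation of the classical Perron-Frobenius theorem is invoked in the reducible case, namely the one asserting only the existence of nonnegative (possibly with some zero entries) Perron eigenvectors.
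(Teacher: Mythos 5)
Your proof is correct and follows exactly the route the paper takes: shift $A$ to the nonnegative matrix $B=A+cI$, apply the classical Perron-Frobenius theorem to $B$, and translate the spectrum, eigenvectors, and multiplicities back through the shift. The paper simply states this reduction in one sentence with a reference, so your more detailed bookkeeping is a faithful expansion of the same argument.
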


Lemma \ref{lem:PF} can be obtained from the usual statement of the Perron-Frobenius theorem by applying the latter to $B\coloneqq A+cI$, where $c$ is a real number chosen large enough to make $B$ nonnegative. See, for instance, \cite[p.~60]{Smith1995}.

The spectral abscissa of an irreducible Metzler matrix $A$, which is an algebraically simple eigenvalue by Lemma \ref{lem:PF}, is called the Perron root of $A$. Associated with this eigenvalue is a unique positive right eigenvector with entries summing to one, called the Perron vector of $A$. (Different normalizations can be used to uniquely define the Perron vector.) The second ingredient we require concerns the stability of the Perron vector under perturbations to $A$.

\begin{lemma}\label{lem:Ortega}
	Let $(A_n)$ be a sequence of irreducible Metzler matrices converging to an irreducible Metzler matrix $A$. Then the sequence of Perron vectors corresponding to $(A_n)$ converges to the Perron vector of $A$.
\end{lemma}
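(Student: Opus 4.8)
The plan is to show that the Perron vector depends continuously on an irreducible Metzler matrix, which is a standard perturbation fact but one we should prove cleanly here. First I would reduce to the nonnegative case: choose a real constant $c$ large enough that $A_n+cI\geq0$ for all $n$ and $A+cI\geq0$ (possible since $A_n\to A$, so the diagonal entries are uniformly bounded below), noting that $B_n\coloneqq A_n+cI$ and $B\coloneqq A+cI$ are irreducible nonnegative matrices with $B_n\to B$, that $\rho(B_n)=\zeta(A_n)+c$ and $\rho(B)=\zeta(A)+c$ by Lemma \ref{lem:PF}, and that the Perron vectors (the unique positive right eigenvectors with entries summing to one) of $B_n$ and $B$ coincide with those of $A_n$ and $A$. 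So it suffices to prove the statement for nonnegative matrices.

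Next I would establish convergence of the Perron roots. Since the eigenvalues of a matrix depend continuously on its entries (the roots of the characteristic polynomial vary continuously with the coefficients), and $\rho(B_n)$ is an eigenvalue of $B_n$ lying in the closed disc of radius $\sup_n\|B_n\|<\infty$, every subsequential limit $\lambda$ of $(\rho(B_n))$ is an eigenvalue of $B$, hence $|\lambda|\leq\rho(B)$. For the reverse inequality I would use that $\rho(B)$ is an eigenvalue of $B$ and that eigenvalues are limits of eigenvalues of $B_n$, together with upper semicontinuity: pick eigenvalues $\mu_n$ of $B_n$ with $\mu_n\to\rho(B)$; then $|\mu_n|\leq\rho(B_n)$, so $\rho(B)=\lim|\mu_n|\leq\liminf\rho(B_n)$. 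Combining the two bounds gives $\rho(B_n)\to\rho(B)$.

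Now let $p_n$ denote the Perron vector of $B_n$, so $p_n\gg0$, $\mathbf 1^\top p_n=1$, and $B_np_n=\rho(B_n)p_n$. Because the $p_n$ lie in the simplex $\{x\geq0:\mathbf 1^\top x=1\}$, which is compact, any subsequence has a further subsequence along which $p_n\to p$ for some $p\geq0$ with $\mathbf 1^\top p=1$; in particular $p\neq0$. Passing to the limit in $B_np_n=\rho(B_n)p_n$ and using $\rho(B_n)\to\rho(B)$ gives $Bp=\rho(B)p$. Thus $p$ is a nonnegative, nonzero right eigenvector of the irreducible nonnegative matrix $B$ for the eigenvalue $\rho(B)$; by Lemma \ref{lem:PF} this eigenvalue is algebraically simple with a one-dimensional eigenspace spanned by the Perron vector, and since the Perron vector is the unique such eigenvector with $\mathbf 1^\top(\cdot)=1$, we conclude $p$ equals the Perron vector of $B$. (Alternatively, irreducibility forces any nonnegative eigenvector for $\rho(B)$ to be strictly positive, and positivity plus the normalization pins it down.) Since every subsequence of $(p_n)$ has a further subsequence converging to the Perron vector of $B$, the whole sequence converges to it, which is the claim.

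The main obstacle is the final identification step: ensuring the subsequential limit $p$ is genuinely the Perron vector rather than some other nonnegative eigenvector. This is where irreducibility is essential — it is what guarantees (via Lemma \ref{lem:PF}) that the $\rho(B)$-eigenspace is one-dimensional, so that the normalization $\mathbf 1^\top p=1$ determines $p$ uniquely. The preliminary reductions and the convergence of Perron roots are routine consequences of continuity of eigenvalues and compactness of the simplex.
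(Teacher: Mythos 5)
Your argument is correct, but it takes a different route from the paper, which does not prove Lemma \ref{lem:Ortega} directly at all: it simply invokes the discussion in Ortega's book (pp.~45--46), i.e.\ general perturbation facts about continuity of eigenvalues and of eigenvectors associated with simple eigenvalues, applied to the Perron root. Your proof is instead self-contained: you reduce to the nonnegative case by adding $cI$ (the same device the paper uses to deduce Lemma \ref{lem:PF}), obtain $\rho(B_n)\to\rho(B)$ from continuity of the characteristic roots, and then replace the general eigenvector-perturbation machinery by a compactness argument on the simplex combined with the uniqueness, guaranteed by Lemma \ref{lem:PF} via algebraic (hence geometric) simplicity of $\rho(B)$ for the irreducible limit $B$, of the normalized eigenvector; the usual subsequence-of-a-subsequence step then upgrades subsequential convergence to convergence of the whole sequence. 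Each step you use is sound: the uniform choice of $c$ is available because $(A_n)$ converges, the limit $p$ is nonzero because it lies in the unit simplex, and irreducibility of $B$ is exactly what pins $p$ down as the Perron vector. What your approach buys is an elementary, citation-free proof tailored to the Perron setting, needing only Lemma \ref{lem:PF}; what the paper's citation buys is brevity and access to stronger perturbation statements (e.g.\ differentiability of the Perron root and vector, as in the Deutsch--Neumann references), which are not needed here since continuity suffices.
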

Lemma \ref{lem:Ortega} follows from the discussion on pp.~45--46 of \cite{Ortega1972}. Further results on the derivatives of the Perron root and vector can be found in \cite{DeutschNeumann1984,DeutschNeumann1985}. Continuity of the Perron vector is sufficient for our purposes.

The third ingredient we require is a version of Keldysh's theorem. The following result is the first part of Theorem 2.4 in \cite{Beyn2012}, referred to there as Keldysh's theorem for simple eigenvalues. The asterisks used in its statement signify Hermitian transposition. Elsewhere we use $\top$ to signify the transposition of a real matrix.

\begin{lemma}[Keldysh]\label{lem:Keldysh}
	Let $\Omega$ be an open and connected subset of the complex plane. Let $N$ be a natural number. Let $A:\Omega\to\mathbb C^{N\times N}$ be a holomorphic matrix-valued function, nonsingular somewhere on $\Omega$. If $z_0$ is a point in $\Omega$ such that the null spaces of $A(z_0)$ and $A(z_0)^\ast$ have dimension one and respectively contain vectors $x$ and $y$ satisfying $y^\ast A'(z_0)x\neq0$, then there exists a neighborhood $U\subseteq\Omega$ of $z_0$ and a holomorphic function $R:U\to\mathbb C^{N\times N}$ such that
	\begin{equation*}
		A(z)^{-1}=\frac{1}{z-z_0}(y^\ast A'(z_0)x)^{-1}xy^\ast+R(z),\quad z\in U\setminus\{z_0\}.
	\end{equation*}
\end{lemma}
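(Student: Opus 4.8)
The plan is to replace the singular matrix $A(z_0)$ by a nonsingular bordered matrix, invert that matrix on a neighborhood of $z_0$ by ordinary means, and then read off the Laurent expansion of $A(z)^{-1}$ from the block structure of the inverse. By hypothesis $A(z_0)$ has rank $N-1$; write $x$ for a nonzero vector spanning $\ker A(z_0)$ and $y$ for a nonzero vector spanning $\ker A(z_0)^\ast$, so that $y^\ast A(z_0)=0$ and every vector in $\operatorname{ran} A(z_0)$ is orthogonal to $y$. Consider the holomorphic $(N+1)\times(N+1)$ matrix-valued function
\begin{equation*}
	M(z)\coloneqq\begin{pmatrix}A(z)&y\\ x^\ast&0\end{pmatrix}.
\end{equation*}
I would first check that $M(z_0)$ is nonsingular: if $M(z_0)\begin{pmatrix}u\\ t\end{pmatrix}=0$ with $u\in\mathbb C^N$ and $t\in\mathbb C$, then $A(z_0)u=-ty\in\operatorname{ran} A(z_0)$ is orthogonal to $y$, so $t\,y^\ast y=0$ and hence $t=0$; then $u\in\ker A(z_0)=\operatorname{span}x$ and $x^\ast u=0$ force $u=0$. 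Since $A$ is nonsingular somewhere on the connected set $\Omega$, $\det A$ is holomorphic and not identically zero, so its zeros are isolated; I would therefore fix a neighborhood $U\subseteq\Omega$ of $z_0$ on which $\det M$ is nowhere zero and $z_0$ is the only zero of $\det A$. Then $M(z)^{-1}$ is holomorphic on $U$ and $A(z)^{-1}$ is holomorphic on $U\setminus\{z_0\}$.

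Write the block decomposition $M(z)^{-1}=\begin{pmatrix}E(z)&f(z)\\ k(z)&h(z)\end{pmatrix}$, with $E(z)$ an $N\times N$ block, $f(z)$ a column, $k(z)$ a row and $h(z)$ a scalar, all holomorphic on $U$. From $M(z)M(z)^{-1}=I$ and $M(z)^{-1}M(z)=I$ I would extract the identities
\begin{equation*}
	A(z)f(z)+yh(z)=0,\qquad x^\ast f(z)=1,\qquad k(z)A(z)+h(z)x^\ast=0,\qquad k(z)y=1,\qquad A(z)E(z)+yk(z)=I.
\end{equation*}
Applying $y^\ast$ to the first of these and using $y^\ast A(z_0)=0$ gives $h(z_0)\,y^\ast y=0$, hence $h(z_0)=0$; then $A(z_0)f(z_0)=0$ together with $x^\ast f(z_0)=1$ forces $f(z_0)=x/(x^\ast x)$, and likewise $k(z_0)A(z_0)=0$ with $k(z_0)y=1$ forces $k(z_0)=y^\ast/(y^\ast y)$. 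Differentiating $A(z)f(z)+yh(z)=0$ at $z_0$, applying $y^\ast$, and again using $y^\ast A(z_0)=0$ yields
\begin{equation*}
	h'(z_0)=-\frac{y^\ast A'(z_0)x}{(x^\ast x)(y^\ast y)},
\end{equation*}
which is nonzero precisely by the hypothesis $y^\ast A'(z_0)x\neq0$. Hence $z_0$ is a simple zero of the holomorphic function $h$, so $h(z)\neq0$ on $U\setminus\{z_0\}$ (after shrinking $U$ if necessary) and $h(z)^{-1}=\big(h'(z_0)(z-z_0)\big)^{-1}+(\text{holomorphic near }z_0)$.

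To conclude, for $z\in U\setminus\{z_0\}$ the invertibility of $A(z)$ turns $A(z)f(z)+yh(z)=0$ into $A(z)^{-1}y=-h(z)^{-1}f(z)$, and substituting this into $A(z)^{-1}\big(A(z)E(z)+yk(z)\big)=A(z)^{-1}$ gives
\begin{equation*}
	A(z)^{-1}=E(z)-h(z)^{-1}f(z)k(z),\qquad z\in U\setminus\{z_0\}.
\end{equation*}
Since $E$, $f$, $k$ are holomorphic on $U$, the right-hand side is holomorphic on $U\setminus\{z_0\}$ with a simple pole at $z_0$ whose principal part is $-\big(h'(z_0)(z-z_0)\big)^{-1}f(z_0)k(z_0)$; subtracting this principal part from $A(z)^{-1}$ therefore defines a function that extends holomorphically to all of $U$, which I would take to be $R$. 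Finally I would substitute the values found above, using $f(z_0)k(z_0)=xy^\ast/\big((x^\ast x)(y^\ast y)\big)$: the scalar factors $x^\ast x$ and $y^\ast y$ cancel against those in $h'(z_0)$, leaving $-f(z_0)k(z_0)/h'(z_0)=(y^\ast A'(z_0)x)^{-1}xy^\ast$, so that $A(z)^{-1}=\frac1{z-z_0}(y^\ast A'(z_0)x)^{-1}xy^\ast+R(z)$ on $U\setminus\{z_0\}$, as asserted.

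The calculations are all routine; the one point that needs genuine care is the verification that the bordered matrix $M(z_0)$ is nonsingular, since this is exactly where both one-dimensionality hypotheses — on $\ker A(z_0)$ and on $\ker A(z_0)^\ast$ — are used. The remaining hypothesis $y^\ast A'(z_0)x\neq0$ enters only afterwards, to ensure that $h$ has a simple rather than higher-order zero at $z_0$. A secondary point is bookkeeping: one must intersect the finitely many neighborhoods arising in the argument (isolating the zero of $\det A$, keeping $\det M$ nonzero, keeping $h$ nonzero away from $z_0$) into a single $U$ on which everything holds simultaneously.
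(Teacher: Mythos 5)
Your proof is correct, but it is worth noting that the paper does not prove this lemma at all: it is quoted verbatim as the first part of Theorem 2.4 in Beyn (2012), where it is obtained within the general framework of Keldysh's theorem (root functions / local Smith form for holomorphic matrix functions). Your argument is a self-contained alternative based on the classical bordering trick: augmenting $A(z)$ by the border vectors $y$ and $x^\ast$ to get an invertible $M(z)$, and reading the simple pole of $A(z)^{-1}$ off the scalar entry $h(z)$ of $M(z)^{-1}$, whose simple zero at $z_0$ is exactly governed by $y^\ast A'(z_0)x\neq0$. All the steps check out: the injectivity of $M(z_0)$, the identities extracted from $M M^{-1}=I$ and $M^{-1}M=I$, the values $f(z_0)=x/(x^\ast x)$, $k(z_0)=y^\ast/(y^\ast y)$, $h(z_0)=0$, $h'(z_0)=-y^\ast A'(z_0)x/\big((x^\ast x)(y^\ast y)\big)$, the formula $A(z)^{-1}=E(z)-h(z)^{-1}f(z)k(z)$, and the cancellation of the normalizing factors in the residue. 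Two small remarks: the one-dimensionality of $\ker A(z_0)^\ast$ is not really needed for the nonsingularity of $M(z_0)$ (only $y\neq0$ with $y^\ast A(z_0)=0$ enters there); it is genuinely used when you identify $k(z_0)$ as a multiple of $y^\ast$, and in any case it is automatic from $\dim\ker A(z_0)=1$ since a square matrix and its adjoint have equal rank. Also, shrinking $U$ to keep $h$ nonzero away from $z_0$ is unnecessary: wherever $A(z)$ is invertible, $h(z)=0$ would force $f(z)=-h(z)A(z)^{-1}y=0$, contradicting $x^\ast f(z)=1$. What your route buys is elementarity and explicitness (everything reduces to inverting one holomorphic matrix and a scalar residue computation); what the citation to Beyn buys the paper is access to the full Keldysh machinery for higher-order eigenvalues, which the authors mention but deliberately avoid using.
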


In applications of Keldysh's theorem a point $z_0$ at which $A(z)$ is singular is commonly referred to as an eigenvalue of $A(z)$. This use of the term eigenvalue is intended for settings where $A(z)$ need not be of the form $A(z)=B-zI$ for some matrix $B$. If $A(z)=B-zI$ then $z_0$ is an eigenvalue of $A(z)$ in the Keldysh sense if and only if $z_0$ is an eigenvalue of $B$ in the usual sense. The term simple also has a special meaning in applications of Keldysh's theorem: an eigenvalue $z_0$ of $A(z)$ is said to be simple if the null spaces of $A(z_0)$ and $A(z_0)^\ast$ have dimension one and respectively contain vectors $x$ and $y$ satisfying $y^\ast A'(z_0)x\neq0$. Thus Lemma \ref{lem:Keldysh} requires $z_0$ to be a simple eigenvalue of $A(z)$. If $A(z)=B-zI$ then $z_0$ is a simple eigenvalue of $A(z)$ in the Keldysh sense if and only if $z_0$ is an algebraically simple eigenvalue of $B$ in the usual sense.

While Keldysh's theorem is not explicitly mentioned in \cite{BeareSeoToda2022,BeareToda2022}, use is made of a result in \cite{Schumacher1986} that closely resembles Keldysh's theorem for simple eigenvalues. More general versions of Keldysh's theorem deal with cases where $z_0$ is a higher order pole of $A(z)^{-1}$. Although we focus on such cases in the following sections, we will only make use of Keldysh's theorem for simple eigenvalues. See \cite{Schumacher2026} for a recent discussion of Keldysh's theorem, including some historical remarks.

Now we use Lemmas \ref{lem:PF}, \ref{lem:Ortega} and \ref{lem:Keldysh} to prove Proposition \ref{thm:irreducible}.

\begin{proof}[Proof of Proposition \ref{thm:irreducible}]
	Since $A(\alpha)$ is irreducible Metzler and $\zeta(A(\alpha))=0$, Lemma \ref{lem:PF} implies that zero is an algebraically simple eigenvalue of $A(\alpha)$, and that associated to this eigenvalue are unique positive right and left eigenvectors $x$ and $y$ with entries summing to one. Let $(\alpha_n)$ be a sequence of real numbers in $\Omega$ converging to $\alpha$ but never equal to $\alpha$. Since $A(\alpha_n)$ is irreducible Metzler for each $n$, Lemma \ref{lem:PF} allows us to uniquely define for each $n$ a Perron vector $x_n$ for $A(\alpha_n)$. Using the fact that $A(\alpha)x=0$, $A(\alpha_n)x_n=\zeta(A(\alpha_n))x_n$ and $\zeta(A(\alpha))=0$, we obtain
	\begin{align}
		(A(\alpha_n)-A(\alpha))x&=A(\alpha_n)x_n-A(\alpha_n)(x_n-x)\notag\\
		&=\zeta(A(\alpha_n))x_n-A(\alpha_n)(x_n-x)\notag\\
		&=(\zeta(A(\alpha_n))-\zeta(A(\alpha)))x_n-A(\alpha_n)(x_n-x).\notag\label{eq:irreducibleproof1}
	\end{align}
	Premultiplying both sides by $y^\top$, dividing by $\alpha_n-\alpha$, and using the fact that $y^\top A(\alpha)=0$, we obtain
	\begin{equation*}
		y^\top\frac{A(\alpha_n)-A(\alpha)}{\alpha_n-\alpha}x=\frac{\zeta(A(\alpha_n))-\zeta(A(\alpha))}{\alpha_n-\alpha}y^\top x_n-y^\top\frac{A(\alpha_n)-A(\alpha)}{\alpha_n-\alpha}(x_n-x).
	\end{equation*}
	Now we let $n\to\infty$. Lemma \ref{lem:Ortega} implies that $x_n\to x$, and $A(z)$ is holomorphic, so
	\begin{equation*}
		\lim_{n\to\infty}y^\top\frac{A(\alpha_n)-A(\alpha)}{\alpha_n-\alpha}x=y^\top A'(\alpha)x\,\,\,\text{and}\,\,\,\lim_{n\to\infty}y^\top\frac{A(\alpha_n)-A(\alpha)}{\alpha_n-\alpha}(x_n-x)=0.
	\end{equation*}
	Consequently,
	\begin{equation*}
		\lim_{n\to\infty}\frac{\zeta(A(\alpha_n))-\zeta(A(\alpha))}{\alpha_n-\alpha}y^\top x_n=y^\top A'(\alpha)x.
	\end{equation*}
	Since $x$ and $y$ are positive, we have $\lim_{n\to\infty}y^\top x_n=y^\top x>0$. Thus $\zeta(A(s))$ is differentiable at $\alpha$ as a function of real $s\in\Omega$. Its derivative at $\alpha$ is a nonzero real number $c$ under condition \ref{en:neg}. Thus $y^\top A'(\alpha)x>0$ if $c>0$, or $y^\top A'(\alpha)x<0$ if $c<0$. Lemma \ref{lem:Keldysh} therefore implies that $\alpha$ is a simple pole of $A(z)^{-1}$, and that $\lim_{z\to\alpha}(z-\alpha)A(z)^{-1}=(y^\top A'(\alpha)x)^{-1}xy^\top$. The limit is positive if $c>0$, or negative if $c<0$. Note that the requirement in Lemma \ref{lem:Keldysh} that the null spaces of $A(\alpha)$ and $A(\alpha)^\ast$ have dimension one is met because the zero eigenvalue of $A(\alpha)$ is algebraically simple, hence geometrically simple.
\end{proof}

\section{The general case}\label{sec:reducible}

Our goal in this section is to suitably modify Proposition \ref{thm:irreducible} so that it may be applied in settings where $A(\alpha)$ need not be irreducible. This is achieved in Theorem \ref{thm:main}.

\subsection{Classes and chains of classes}\label{sec:terminology}

We will require some concepts from combinatorial spectral theory. Much of this subsection is adapted from \cite{Rothblum1975}. Similar treatments can be found in \cite{BermanPlemmons1994,Tam2001}.

Let $A$ be a square matrix with directed graph $G(A)=(V(A),E(A))$. We say that vertex $m\in V(A)$ has \emph{access} to vertex $n\in V(A)$ if $m=n$ or if there exists a path of directed edges in $E(A)$ from $m$ to $n$. We say that vertices $m,n\in V(A)$ \emph{communicate} if they have access to one another. Communication defines an equivalence relation on $V(A)$, and thus partitions $V(A)$ into disjoint equivalence classes $\gamma_1,\dots,\gamma_M\subseteq V(A)$. Any two vertices within the same equivalence class communicate. The sets of vertices $\gamma_1,\dots,\gamma_M$ are called the \emph{classes} of $A$. They are the vertex sets of the strongly connected components of $G(A)$. The matrix $A$ is irreducible if and only if $V(A)$ is the single class for $A$, i.e.\ $M=1$.

If $\gamma$ and $\delta$ are classes of $A$, then we say that $\gamma$ has access to $\delta$ if there exists a vertex $m\in\gamma$ with access to some vertex $n\in\delta$. In this case we write $\gamma\preceq\delta$. If $\gamma\preceq\delta$ and $\gamma\neq\delta$ then we write $\gamma\prec\delta$. The binary relation $\preceq$ defines a partial order on the set of all classes of $A$, while $\prec$ defines a strict partial order. If $\gamma$ is a class of $A$ such that $\delta\nprec\gamma$ for every class $\delta$ of $A$, then $\gamma$ is said to be \emph{initial}. If $\gamma$ is a class of $A$ such that $\gamma\nprec\delta$ for every class $\delta$ of $A$, then $\gamma$ is said to be \emph{final}. Every square matrix has at least one initial class and at least one final class.

To illustrate the preceding concepts we consider the $10\times10$ Metzler matrix defined by
\setlength{\arraycolsep}{4pt}
\begin{equation}\label{eq:exampleA}
	A=\begin{bmatrix}
		-3&0&0&1&0&3&0&0&0&0\\
		0&1&0&0&0&0&0&0&0&4\\
		0&1&0&0&0&0&1&0&0&0\\
		12&0&0&1&0&0&4&0&0&0\\
		0&0&0&0&1&0&0&1&0&0\\
		0&0&0&0&0&0&1&0&7&0\\
		0&0&1&0&0&1&0&0&0&0\\
		0&0&0&0&5&0&0&-3&0&0\\
		0&0&0&0&0&0&0&0&3&0\\
		0&2&0&0&0&0&0&2&0&-1
	\end{bmatrix}.
\end{equation}
The pattern of zero and nonzero off-diagonal entries of $A$ generates the directed graph $G(A)$ displayed in Figure \ref{fig:digraph}. There are exactly five classes of $A$, which we label by
\begin{equation*}
	\gamma_1=\{1,4\},\quad\gamma_2=\{3,6,7\},\quad\gamma_3=\{2,10\},\quad\gamma_4=\{9\},\quad\gamma_5=\{5,8\}.
\end{equation*}
The five classes of $A$ satisfy the four strict partial orderings
\begin{equation*}
	\gamma_1\prec\gamma_2,\quad\gamma_2\prec\gamma_3,\quad\gamma_2\prec\gamma_4,\quad\gamma_3\prec\gamma_5,
\end{equation*}
as well as the other strict partial orderings implied by those four via transitivity. Note however that $\gamma_4$ is incomparable to $\gamma_3$ and to $\gamma_5$. In Figure \ref{fig:digraph}, $\gamma_4$ is located horizontally between $\gamma_3$ and $\gamma_5$, but we may equally well have located it to the left or right of both. There is a unique initial class of $A$, this being $\gamma_1$, and exactly two final classes of $A$, these being $\gamma_4$ and $\gamma_5$.

\begin{figure}
	\begin{center}
		\begin{tikzpicture}[scale=1.3,auto,swap]
			
			\tikzstyle{vertex}=[circle,draw,fill=yellow!20]
			\tikzstyle{edge} = [->,> = latex']
			
			\tikzset{
				mystyle/.style={circle,draw,fill=yellow!20,inner sep=0pt,text width=6mm,align=center}
			}
			
			\node[vertex] (4) at (0,1) [mystyle] {$4$};
			\node[vertex] (1) at (0,2) [mystyle] {$1$};
			\draw[edge] (1) to[bend left] (4);
			\draw[edge] (4) to[bend left] (1);
			
			\node[vertex] (6) at (1.5,2) [mystyle] {$6$};
			\node[vertex] (7) at (1.5,1) [mystyle] {$7$};
			\node[vertex] (3) at (1.5,0) [mystyle] {$3$};
			\draw[edge] (1) to (6);
			\draw[edge] (4) to (7);
			\draw[edge] (6) to[bend left] (7);
			\draw[edge] (7) to[bend left] (6);
			\draw[edge] (7) to[bend left] (3);
			\draw[edge] (3) to[bend left] (7);
			
			\node[vertex] (10) at (3,1) [mystyle] {$10$};
			\node[vertex] (2) at (3,0) [mystyle] {$2$};
			\draw[edge] (3) to (2);
			\draw[edge] (2) to[bend left] (10);
			\draw[edge] (10) to[bend left] (2);
			
			\node[vertex] (9) at (4.5,2) [mystyle] {$9$};
			\draw[edge] (6) to (9);
			
			\node[vertex] (8) at (6,1) [mystyle] {$8$};
			\node[vertex] (5) at (6,0) [mystyle] {$5$};
			\draw[edge] (10) to (8);
			\draw[edge] (8) to[bend left] (5);
			\draw[edge] (5) to[bend left] (8);
			
			\draw[red, dotted, thick] (0,1) circle [x radius = .5, y radius = 1.5];
			\node[text=red] at (0,-.8) {$\gamma_1$};
			\draw[olive, dotted, thick] (1.5,1) circle [x radius = .5, y radius = 1.5];
			\node[text=olive] at (1.5,-.8) {$\gamma_2$};
			\draw[blue, dotted, thick] (3,1) circle [x radius = .5, y radius = 1.5];
			\node[text=blue] at (3,-.8) {$\gamma_3$};
			\draw[cyan, dotted, thick] (4.5,1) circle [x radius = .5, y radius = 1.5];
			\node[text=cyan] at (4.5,-.8) {$\gamma_4$};
			\draw[magenta, dotted, thick] (6,1) circle [x radius = .5, y radius = 1.5];
			\node[text=magenta] at (6,-.8) {$\gamma_5$};

		\end{tikzpicture}
	\end{center}
	\caption{The directed graph $G(A)$ for the matrix $A$ in \eqref{eq:exampleA}.}
	\label{fig:digraph}
\end{figure}

It is always possible to relabel the rows and columns of a square matrix $A$ in such a way that the resulting matrix is block upper triangular with blocks corresponding to pairs of classes of $A$. Specifically, given a square matrix $A$ whose classes are $\gamma_1,\dots,\gamma_M$, we may define
\begin{equation}\label{eq:permuteA}
	\tilde{A}
	=\begin{bmatrix}
		A_{\gamma_1,\gamma_1}&A_{\gamma_1,\gamma_2}&\cdots&A_{\gamma_1,\gamma_M}\\
		A_{\gamma_2,\gamma_1}&A_{\gamma_2,\gamma_2}&\cdots&A_{\gamma_2,\gamma_M}\\
		\vdots&\vdots&\ddots&\vdots\\
		A_{\gamma_M,\gamma_1}&A_{\gamma_M,\gamma_2}&\cdots&A_{\gamma_M,\gamma_M}
	\end{bmatrix},
\end{equation}
where $A_{\gamma_j,\gamma_k}$ is the matrix obtained by deleting those rows of $A$ whose indices do not belong to $\gamma_j$ and those columns of $A$ whose indices do not belong to $\gamma_k$. If the notation $\gamma_1,\dots,\gamma_M$ is assigned to the classes of $A$ in such a way that $\gamma_k\nprec\gamma_j$ whenever $j<k$, then the blocks in \eqref{eq:permuteA} below the diagonal are guaranteed to be zero. This is always possible because $\prec$ is a strict partial order. The diagonal blocks in \eqref{eq:permuteA} are irreducible because all pairs of vertices in a class communicate. The construction of $\tilde{A}$ is equivalent to applying a permutation to the rows and columns of $A$. Thus $A$ and $\tilde{A}$ are similar and share many spectral properties.

In our example, we may transform $A$ to the form \eqref{eq:permuteA} by applying the permutation
\begin{equation*}
	(1,2,3,4,5,6,7,8,9,10)\mapsto(1,6,3,2,9,4,5,10,8,7)
\end{equation*}
to the rows and columns of $A$. This produces the matrix
\begin{equation*}
	\tilde{A}=\begin{bmatrix}
		A_{\gamma_1,\gamma_1}&A_{\gamma_1,\gamma_2}&\cdots&A_{\gamma_1,\gamma_5}\\
		A_{\gamma_2,\gamma_1}&A_{\gamma_2,\gamma_2}&\cdots&A_{\gamma_2,\gamma_5}\\
		\vdots&\vdots&\ddots&\vdots\\
		A_{\gamma_5,\gamma_1}&A_{\gamma_5,\gamma_2}&\cdots&A_{\gamma_5,\gamma_5}
	\end{bmatrix}
	=\left[ \begin{array}{cc:ccc:cc:c:cc}
		-3&1&0&3&0&0&0&0&0&0\\
		12&1&0&0&4&0&0&0&0&0\\ \hdashline
		0&0&0&0&1&1&0&0&0&0\\
		0&0&0&0&1&0&0&7&0&0\\
		0&0&1&1&0&0&0&0&0&0\\ \hdashline
		0&0&0&0&0&1&4&0&0&0\\
		0&0&0&0&0&2&-1&0&0&2\\ \hdashline
		0&0&0&0&0&0&0&3&0&0\\ \hdashline
		0&0&0&0&0&0&0&0&1&1\\
		0&0&0&0&0&0&0&0&5&-3
	\end{array} \right],
\end{equation*}
which is block upper triangular with irreducible diagonal blocks.

In cases where $A$ is Metzler it can be useful to distinguish between basic and nonbasic classes of $A$. Let $A$ be a Metzler matrix with classes $\gamma_1,\dots,\gamma_M$. Since $A$ is similar to the block matrix $\tilde{A}$ obtained by permuting rows and columns as in \eqref{eq:permuteA}, the spectral abscissae of $A$ and $\tilde{A}$ are equal. A suitable assignment of the notation $\gamma_1,\dots,\gamma_M$ to the classes of $A$ guarantees that $\tilde{A}$ is block upper triangular, so the spectral abscissa of $\tilde{A}$ is the maximum of the spectral abscissae of its diagonal blocks $A_{\gamma_k,\gamma_k}$. Thus
\begin{equation*}
	\zeta(A)=\max_{k\in\{1,\dots,M\}}\zeta(A_{\gamma_k,\gamma_k}).
\end{equation*}
We say that the class $\gamma_k$ is \emph{basic} if $\zeta(A_{\gamma_k,\gamma_k})=\zeta(A)$. Otherwise we say that $\gamma_k$ is \emph{nonbasic}. Every Metzler matrix has at least one basic class.

In our example, straightforward calculations show that
\begin{equation*}
	\zeta\left(\begin{matrix}-3&1\\12&1\end{matrix}\right)=3,\,\,\,\zeta\left(\begin{matrix}0&0&1\\0&0&1\\1&1&0\end{matrix}\right)=\sqrt{2},\,\,\,\zeta\left(\begin{matrix}1&4\\2&-1\end{matrix}\right)=3,\,\,\,\zeta\left(\begin{matrix}1&1\\5&-3\end{matrix}\right)=2.
\end{equation*}
Thus the basic classes of $A$ are $\gamma_1$, $\gamma_3$ and $\gamma_4$, and the nonbasic classes are $\gamma_2$ and $\gamma_5$.

We will require the concept of a chain of classes. Given a square matrix $A$ with classes $\gamma_1,\dots,\gamma_M$, an $\ell$-tuple of classes $(\gamma_{i_1},\dots,\gamma_{i_\ell})$ is called a \emph{chain from }$\gamma_{i_1}$ \emph{to} $\gamma_{i_\ell}$, or simply a \emph{chain}, if $\ell=1$ or if $\gamma_{i_k}\prec\gamma_{i_{k+1}}$ for each $k\in\{1,\dots,\ell-1\}$. In our example, valid chains include, for instance,
\begin{equation}\label{eq:chains}
	(\gamma_1,\gamma_2,\gamma_4),\quad(\gamma_2,\gamma_5),\quad(\gamma_1),\quad(\gamma_1,\gamma_2,\gamma_3,\gamma_5),\quad(\gamma_1,\gamma_3),\quad(\gamma_1,\gamma_4).
\end{equation}
Tuples of classes which are \emph{not} valid chains include, for instance,
\begin{equation*}
	(\gamma_3,\gamma_4),\quad(\gamma_1,\gamma_2,\gamma_3,\gamma_4,\gamma_5),\quad(\gamma_5,\gamma_2),\quad(\gamma_1,\gamma_1),\quad(\gamma_4,\gamma_5),\quad(\gamma_1,\gamma_3,\gamma_4).
\end{equation*}

If $A$ is Metzler then to each chain of classes of $A$ we assign a nonnegative integer called its length. The \emph{length} of a chain is the number of basic classes it contains. In our example, the chains listed in \eqref{eq:chains} have lengths $2$, $0$, $1$, $2$, $2$ and $2$, respectively.

In Theorem \ref{thm:main}, to be stated in Section \ref{sec:mainresult}, and also in the Rothblum index theorem, to be stated in Section \ref{sec:Rothblum}, a key role is played by the length of the longest chain of classes, by which we mean the maximum length of all chains. In our example there are exactly three basic classes, these being $\gamma_1$, $\gamma_3$ and $\gamma_4$. The length of the longest chain of classes is $2$. There is no chain of classes of length $3$ because $\gamma_3$ and $\gamma_4$ cannot both be included in a single chain due to the fact that $\gamma_3\nprec\gamma_4$ and $\gamma_4\nprec\gamma_3$.

We require one further concept related to chains, to be used only in the proof of Theorem \ref{thm:main}. A chain of classes $(\gamma_{i_1},\dots,\gamma_{i_\ell})$ will be called a \emph{direct chain} if $\ell=1$ or if, for each $k\in\{1,\dots,\ell-1\}$, there exist vertices $m\in\gamma_{i_k}$ and $n\in\gamma_{i_{k+1}}$ such that $(m,n)$ is a directed edge in $E(A)$. If $A$ is Metzler then the latter requirement is equivalent to requiring that $A_{\gamma_{i_k},\gamma_{i_{k+1}}}$ is semipositive for each $k\in\{1,\dots,\ell-1\}$. In our example, valid direct chains include, for instance,
\begin{equation*}
	(\gamma_1,\gamma_2,\gamma_4),\quad(\gamma_1),\quad(\gamma_1,\gamma_2,\gamma_3,\gamma_5).
\end{equation*}
Chains which are \emph{not} direct chains include, for instance,
\begin{equation*}
	\quad(\gamma_2,\gamma_5),\quad(\gamma_1,\gamma_3),\quad(\gamma_1,\gamma_4).
\end{equation*}
Every chain of classes is nested within some direct chain of classes. For instance, in our example, the chain $(\gamma_1,\gamma_4)$ is not a direct chain but is nested within the direct chain $(\gamma_1,\gamma_2,\gamma_4)$. Both of these chains have length $2$.

\subsection{Main result}\label{sec:mainresult}

Our primary technical contribution is the following modification of Proposition \ref{thm:irreducible} applicable in settings where $A(\alpha)$ need not be irreducible. It will be used in Section \ref{sec:application}, in conjunction with Theorem \ref{thm:tauberian}, to show that the tail probabilities generated by two models of random growth resemble those of an Erlang distribution.

\begin{theorem}\label{thm:main}
	Let $\Omega$ be an open and connected subset of the complex plane. Let $N$ be a natural number. Let $A:\Omega\to\mathbb C^{N\times N}$ be a holomorphic matrix-valued function, nonsingular somewhere on $\Omega$. Let $v$ and $w$ be nonnegative $N\times1$ vectors. Let $\alpha$ be a real element of $\Omega$. Assume that conditions \ref{en:Metzler} and \ref{en:zero} of Proposition \ref{thm:irreducible} are satisfied. Assume further that:
	\begin{enumerate}[label=\upshape(\roman*)]
		\setcounter{enumi}{\value{counter:notation}}
		\item If $\gamma$ and $\delta$ are classes of $A(\alpha)$ with $\gamma\npreceq\delta$, then $A_{\gamma,\delta}(z)=0$ for all $z\in\Omega$. \label{en:classes}
		\item Either $\zeta(A_{\gamma,\gamma}(s))$ has a positive left- or right-derivative at $\alpha$ as a function of real $s\in\Omega$ for every basic class $\gamma$ of $A(\alpha)$, or $\zeta(A_{\gamma,\gamma}(s))$ has a negative left- or right-derivative at $\alpha$ as a function of real $s\in\Omega$ for every basic class $\gamma$ of $A(\alpha)$. \label{en:basic}
	\end{enumerate}
	Then:
	\begin{enumerate}[label=\upshape(\alph*)]
		\item $\alpha$ is a pole of $A(z)^{-1}$ with order equal to the length of the longest chain of classes of $A(\alpha)$. \label{en:1}
		\item If there exists a positive length chain of classes $(\gamma_{1},\dots,\gamma_{\ell})$ of $A(\alpha)$ such that $\max_{m\in\gamma_{1}}v_m>0$ and $\max_{n\in\gamma_{\ell}}w_n>0$, then $\alpha$ is a pole of $v^\top A(z)^{-1}w$ with order equal to the length of the longest such chain. \label{en:2}
		\item If no such chain exists, then $\alpha$ is a removable singularity of $v^\top A(z)^{-1}w$. \label{en:3}
	\end{enumerate}
\end{theorem}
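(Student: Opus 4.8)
The plan is to permute $A(z)$ into block upper triangular form, expand $A(z)^{-1}$ as a finite sum indexed by chains of classes, and control the signs of the leading Laurent coefficients so that the contributions of the longest chains reinforce rather than cancel.

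\emph{Reduction and local analysis of the diagonal blocks.} First I would relabel the rows and columns of $A(z)$ so that it becomes block upper triangular with diagonal blocks $A_{\gamma_1,\gamma_1}(z),\dots,A_{\gamma_M,\gamma_M}(z)$ indexed by the classes of $A(\alpha)$, ordered compatibly with $\prec$. Condition \ref{en:classes} guarantees that \emph{every} block strictly below the diagonal vanishes identically on $\Omega$, not merely at $\alpha$. Applying the same permutation to $v$ and $w$ keeps them nonnegative, leaves $v^\top A(z)^{-1}w$ unchanged, and relabels classes and chains consistently, so this is harmless. Since the statement is local at $\alpha$, I would then pass to a connected open neighborhood $\Omega'\subseteq\Omega$ of $\alpha$ small enough that every entry of $A(z)$ that is nonzero at $\alpha$ stays nonzero on $\Omega'$, and that $A_{\gamma,\gamma}(z)$ is nonsingular on $\Omega'$ whenever $\gamma$ is nonbasic (which is possible since $\zeta(A_{\gamma,\gamma}(\alpha))<0$ forces $A_{\gamma,\gamma}(\alpha)$ to be nonsingular). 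On $\Omega'$, each basic diagonal block $A_{\gamma,\gamma}$ satisfies the hypotheses of Proposition \ref{thm:irreducible}: it is holomorphic and Metzler; $\zeta(A_{\gamma,\gamma}(\alpha))=0$ since $\gamma$ is basic; $A_{\gamma,\gamma}(\alpha)$ is irreducible, so $A_{\gamma,\gamma}(s)$ is irreducible for all real $s\in\Omega'$; and condition \ref{en:basic} supplies a nonzero one-sided derivative of $\zeta(A_{\gamma,\gamma}(s))$ at $\alpha$. Hence $\alpha$ is a simple pole of $A_{\gamma,\gamma}(z)^{-1}$, and $P_\gamma\coloneqq\lim_{z\to\alpha}(z-\alpha)A_{\gamma,\gamma}(z)^{-1}$ satisfies $P_\gamma\gg0$ or $P_\gamma\ll0$; by \ref{en:basic} the sign is the same, say $\sigma\in\{+1,-1\}$, for every basic $\gamma$. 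For nonbasic $\gamma$, $A_{\gamma,\gamma}(z)^{-1}$ is holomorphic at $\alpha$, and since $-A_{\gamma,\gamma}(\alpha)$ is a nonsingular irreducible M-matrix, $P_\gamma\coloneqq A_{\gamma,\gamma}(\alpha)^{-1}\ll0$.

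\emph{The chain expansion and the upper bounds.} Writing $A(z)=D(z)+N(z)$ with $D$ block diagonal and $N$ strictly block upper triangular, nilpotency of $D(z)^{-1}N(z)$ gives $A(z)^{-1}=\sum_{r\geq0}(-1)^r(D(z)^{-1}N(z))^rD(z)^{-1}$ wherever $D(z)$ is nonsingular, hence as meromorphic functions on $\Omega'$. Expanding and discarding the identically vanishing blocks permitted by \ref{en:classes}, the $(\gamma,\delta)$ block of $A(z)^{-1}$ becomes a finite sum, indexed by chains $\gamma=\delta_0\prec\delta_1\prec\cdots\prec\delta_r=\delta$ of classes of $A(\alpha)$, of the products $(-1)^rA_{\delta_0,\delta_0}(z)^{-1}A_{\delta_0,\delta_1}(z)A_{\delta_1,\delta_1}(z)^{-1}\cdots A_{\delta_r,\delta_r}(z)^{-1}$. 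Sandwiching between $v$ and $w$ and summing over all chains expresses $v^\top A(z)^{-1}w$ as a sum in which the chain $(\delta_0,\dots,\delta_r)$ contributes $\pm\,v_{\delta_0}^\top A_{\delta_0,\delta_0}(z)^{-1}A_{\delta_0,\delta_1}(z)\cdots A_{\delta_r,\delta_r}(z)^{-1}w_{\delta_r}$, where $v_{\delta_0}$ and $w_{\delta_r}$ are the corresponding subvectors. Because the off-diagonal factors are holomorphic and each $A_{\delta_k,\delta_k}(z)^{-1}$ has a pole of order $1$ at $\alpha$ if $\delta_k$ is basic and none if $\delta_k$ is nonbasic, each such term has a pole at $\alpha$ of order at most the number of basic classes in the chain, i.e.\ at most its length; and the term vanishes identically unless $v_{\delta_0}>0$ and $w_{\delta_r}>0$. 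This gives the upper bounds in \ref{en:1} and \ref{en:2} at once, and it gives \ref{en:3}: if there is no positive-length chain with $v_{\delta_0}>0$ and $w_{\delta_r}>0$, then every term that is not identically zero comes from a length-zero chain and is therefore holomorphic at $\alpha$.

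\emph{The matching lower bound, which is the crux.} Here the difficulty is to rule out cancellation among the terms coming from the chains of maximal length $d$ (for \ref{en:2}, $d$ is the largest length of a chain with $v$ positive on its first class and $w$ positive on its last; for \ref{en:1}, $d$ is the largest length of any chain). Given a chain $C=(\delta_0,\dots,\delta_r)$ of length $d$, its leading Laurent coefficient at $\alpha$ equals, up to the factor $(-1)^r$, the product $v_{\delta_0}^\top P_{\delta_0}A_{\delta_0,\delta_1}(\alpha)P_{\delta_1}\cdots P_{\delta_r}w_{\delta_r}$. If $C$ is a \emph{direct} chain then each $A_{\delta_k,\delta_{k+1}}(\alpha)$ is semipositive, while each $P_{\delta_k}$ is either $\gg0$ or $\ll0$; since $v_{\delta_0}$, $w_{\delta_r}$ and every $A_{\delta_k,\delta_{k+1}}(\alpha)$ are nonnegative and nonzero while every $P_{\delta_k}$ is strictly signed, a short induction shows this product is a nonzero scalar of sign $(-1)^r\prod_k\mathrm{sgn}(P_{\delta_k})=(-1)^r\sigma^{d}(-1)^{(r+1)-d}=(-1)^{d+1}\sigma^{d}$. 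The decisive point is that this sign does not depend on $r$, i.e.\ on how many nonbasic classes $C$ contains. Consequently the length-$d$ direct chains meeting the endpoint conditions imposed by $v$ and $w$ all contribute leading coefficients of one and the same sign and therefore add; the remaining length-$d$ chains with a nonzero term are those for which some $A_{\delta_k,\delta_{k+1}}(\alpha)$ vanishes, and these contribute poles of order strictly less than $d$. Finally, at least one admissible length-$d$ direct chain exists: any chain of length $d$ satisfying the endpoint conditions is nested in a direct chain with the same first and last classes, whose length is at least $d$ (it contains the original chain) and at most $d$ by maximality, hence exactly $d$. This proves \ref{en:2}, and \ref{en:1} follows by applying \ref{en:2} with $v$ and $w$ running over the standard basis vectors and taking the maximum of the resulting pole orders over all pairs of classes.
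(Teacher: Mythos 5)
Your proof is correct and follows essentially the same route as the paper: block-triangularize by classes, expand the inverse as a sum over chains (your nilpotent Neumann series reproduces exactly the block-inverse formula of Lemma \ref{lem:triangularinverse}), apply Proposition \ref{thm:irreducible} to the basic diagonal blocks and Lemma \ref{lem:Metzlerinverse} to the nonbasic ones, and use the common sign of the leading coefficients of maximal-length direct chains, together with the nesting of chains inside direct chains with the same endpoints, to rule out cancellation. The only cosmetic differences are that the paper packages the chain expansion as a separate lemma and proves part \ref{en:1} directly from the block analysis rather than deducing it from part \ref{en:2} with standard basis vectors.
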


In condition \ref{en:classes} the notation $A_{\gamma,\delta}(z)$ refers to the matrix obtained by deleting the rows and columns of $A(z)$ whose indices do not belong to $\gamma$ and $\delta$ respectively. This condition prevents the formation of connections to previously inaccessible classes as $z$ moves away from $\alpha$. Examples provided at the end of this subsection illustrate what can go wrong if conditions \ref{en:classes} or \ref{en:basic} are not satisfied.

We will appeal to two lemmas on matrix algebra in our proof of Theorem \ref{thm:main}. The first is contained in Theorems 2.3 (see condition $\text{N}_{38}$) and 2.7 in ch.~6 of \cite{BermanPlemmons1994}.
\begin{lemma}\label{lem:Metzlerinverse}
	Let $A$ be a Metzler matrix with $\zeta(A)<0$. Then $A^{-1}\leq0$, and if $A$ is irreducible then $A^{-1}\ll0$.
\end{lemma}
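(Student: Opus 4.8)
The plan is to reduce Lemma~\ref{lem:Metzlerinverse} to the familiar nonnegative case via a diagonal shift, and then read off the sign of $A^{-1}$ from a convergent Neumann series. First I would observe that $\zeta(A)<0$ forces every eigenvalue of $A$ to have strictly negative real part, so $0$ is not an eigenvalue and $A^{-1}$ is well defined. Next I would pick a real number $c>0$ large enough that $B\coloneqq A+cI\geq0$; such a $c$ exists precisely because $A$ is Metzler, so only the diagonal entries can be negative and they are pushed up by $c$. The eigenvalues of $B$ are those of $A$ translated by $c$, so $\zeta(B)=\zeta(A)+c<c$; since $B$ is nonnegative, Lemma~\ref{lem:PF} gives $\rho(B)=\zeta(B)<c$, hence $\rho(c^{-1}B)<1$. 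The Neumann series $\sum_{k\geq0}c^{-k}B^k$ therefore converges to $(I-c^{-1}B)^{-1}$, and from $-A=c(I-c^{-1}B)$ I would obtain
\[
	(-A)^{-1}=\sum_{k=0}^\infty c^{-(k+1)}B^k.
\]
Every term on the right is a positive scalar times the nonnegative matrix $B^k$, so $(-A)^{-1}\geq0$, and consequently $A^{-1}\leq0$, which is the first assertion.

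For the irreducible case I would upgrade this to strict entrywise negativity. The diagonal shift does not change the directed graph, so $B$ is irreducible whenever $A$ is. Fix an ordered pair of vertices $(i,j)$. When $i=j$, the $k=0$ term already contributes $c^{-1}>0$ to the $(i,i)$-entry of the series. When $i\neq j$, irreducibility of $B$ supplies a path from $i$ to $j$ in $G(B)$; if that path consists of $k\geq1$ directed edges, then $(B^k)_{ij}$ dominates the product of the entries of $B$ along it, each of which is positive since $B\geq0$, so $(B^k)_{ij}>0$. In both cases the $(i,j)$-entry of $\sum_{k\geq0}c^{-(k+1)}B^k$ is a sum of nonnegative numbers with at least one strictly positive, hence is positive; thus $(-A)^{-1}\gg0$ and $A^{-1}\ll0$.

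No step presents a genuine difficulty. The only point needing a little care is the twofold requirement on $c$ — large enough to make $B$ nonnegative, yet still small relative to $\rho(B)^{-1}$ so that $\rho(c^{-1}B)<1$ — which is exactly where the hypothesis $\zeta(A)<0$ together with the nonnegative Perron–Frobenius identity $\rho=\zeta$ from Lemma~\ref{lem:PF} is used; everything else is the standard combinatorial fact that a path in the digraph of an irreducible nonnegative matrix witnesses a positive entry in the corresponding matrix power.
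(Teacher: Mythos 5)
Your proof is correct, and it takes a different (more self-contained) route than the paper, which does not prove the lemma at all but instead cites the M-matrix characterizations in Berman and Plemmons (Theorem 2.3, condition $\mathrm{N}_{38}$, and Theorem 2.7 of ch.~6). Your Neumann-series argument is essentially the elementary proof underlying those cited results: shift to $B\coloneqq A+cI\geq0$, use $\rho(B)=\zeta(B)=\zeta(A)+c<c$ from Lemma~\ref{lem:PF}, expand $(-A)^{-1}=\sum_{k\geq0}c^{-(k+1)}B^k\geq0$ to get $A^{-1}\leq0$, and in the irreducible case note that $G(B)=G(A)$, so for $i\neq j$ a path of some length $k\geq1$ from $i$ to $j$ makes $(B^k)_{ij}$ at least the product of the (positive) entries along that path, while the $k=0$ term handles the diagonal; hence every entry of the series is strictly positive and $A^{-1}\ll0$. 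What your route buys is a complete argument from the Perron--Frobenius lemma already stated in the paper, at the cost of a page of routine computation that the paper avoids by citation; the cited route buys brevity and situates the lemma within standard M-matrix theory. One small remark: the ``twofold requirement'' on $c$ that you flag is not a genuine tension --- once $c$ is large enough that $A+cI\geq0$, the bound $\rho(B)<c$ is automatic from $\zeta(A)<0$, so there is no upper constraint on $c$ to verify, and your own resolution of the point already says as much.
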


The second lemma we require extends the well-known inverse formula for $2\times2$ upper triangular block matrices with square diagonal blocks, i.e.
\begin{equation*}
	\begin{bmatrix}A_{1,1}&A_{1,2}\\0&A_{2,2}\end{bmatrix}^{-1}=\begin{bmatrix}A_{1,1}^{-1}&-A_{1,1}^{-1}A_{1,2}A_{2,2}^{-1}\\0&A_{2,2}^{-1}\end{bmatrix},
\end{equation*}
to the $N\times N$ case. We omit the proof as it is a straightforward application of induction.

\begin{lemma}\label{lem:triangularinverse}
	Let $A$ be a nonsingular block matrix with $(j,k)$-th block denoted by $A_{j,k}$. Assume that $A$ is block upper triangular with square diagonal blocks. Let $B=A^{-1}$, and partition $B$ into blocks $B_{j,k}$ conformably with the blocks of $A$. Then $B$ is block upper triangular and for $j\leq k$ we have
	\begin{align}
		B_{j,k}=\sum(-1)^{\ell-1}A_{i_1,i_1}^{-1}A_{i_1,i_2}A^{-1}_{i_2,i_2}\cdots A_{i_{\ell-1},i_\ell}A^{-1}_{i_\ell,i_\ell},\label{eq:blockinverse}
	\end{align}
	where the sum runs over all integer tuples $(i_1,\dots,i_\ell)$ with $\ell\in\{1,\dots,k-j+1\}$ and $j=i_1<\cdots<i_\ell=k$.
\end{lemma}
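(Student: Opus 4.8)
The plan is to prove the block formula \eqref{eq:blockinverse} by induction on the number $M$ of diagonal blocks of $A$, using the $2\times2$ block inverse formula displayed immediately before the lemma as the engine of the inductive step. Before beginning I would record the preliminary fact that every diagonal block $A_{i,i}$ is invertible: since $A$ is block upper triangular with square diagonal blocks, $\det A=\prod_i\det A_{i,i}$, so invertibility of $A$ forces $\det A_{i,i}\neq0$ for every $i$. In particular every matrix product appearing in \eqref{eq:blockinverse} is well-defined, and every trailing block-diagonal submatrix of $A$ is again invertible and block upper triangular.

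The base case $M=1$ is immediate: $B=A_{1,1}^{-1}$, and the only tuple with $1=i_1<\cdots<i_\ell=1$ is the singleton $(1)$, whose term is $A_{1,1}^{-1}$. For the inductive step, partition $A$ into $2\times2$ block form with $(1,1)$-block $A_{1,1}$, first block-row remainder $C=\begin{bmatrix}A_{1,2}&\cdots&A_{1,M}\end{bmatrix}$, zero lower-left corner, and bottom-right corner $D$ the $(M-1)\times(M-1)$ block matrix with blocks $D_{p,q}=A_{p+1,q+1}$. Then $D$ is invertible, block upper triangular, with square diagonal blocks, so the inductive hypothesis applies to $D$. The $2\times2$ inverse formula then yields $B_{1,1}=A_{1,1}^{-1}$; $B_{j,k}=0$ for $j>k$ (combining the zero lower-left corner with the block triangularity of $D^{-1}$ supplied by the inductive hypothesis); $B_{j,k}=(D^{-1})_{j-1,k-1}$ for $2\le j\le k$; and, for $k\ge2$, $B_{1,k}$ equal to the $(k-1)$-st block of $-A_{1,1}^{-1}CD^{-1}$, which — using block triangularity of $D^{-1}$ to truncate the sum and the already-established identity $B_{m,k}=(D^{-1})_{m-1,k-1}$ — equals $-A_{1,1}^{-1}\sum_{m=2}^{k}A_{1,m}B_{m,k}$.

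It then remains to match these expressions against \eqref{eq:blockinverse}, which is pure bookkeeping with chains. For $2\le j\le k$, re-indexing the tuples in $(D^{-1})_{j-1,k-1}$ via $i_t\mapsto i_t+1$ reproduces exactly the sum over $j=i_1<\cdots<i_\ell=k$ (all indices automatically lie in $\{2,\dots,M\}$, and no shorter tuple intrudes). For $B_{1,1}$ the single tuple $(1)$ is correct. For $B_{1,k}$ with $k\ge2$, I would substitute the already-verified formula for each $B_{m,k}$ into $-A_{1,1}^{-1}\sum_{m=2}^{k}A_{1,m}B_{m,k}$ and observe that prepending the index $1$ to a tuple $m=i_1<\cdots<i_\ell=k$ converts its term $(-1)^{\ell-1}A_{i_1,i_1}^{-1}\cdots A_{i_\ell,i_\ell}^{-1}$ into $(-1)^{\ell}A_{1,1}^{-1}A_{1,m}A_{i_1,i_1}^{-1}\cdots A_{i_\ell,i_\ell}^{-1}$, which is precisely the term of \eqref{eq:blockinverse} attached to the lengthened tuple $1<i_1<\cdots<i_\ell=k$. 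This prepending is a bijection between tuples from $1$ to $k$ of length at least $2$ and pairs consisting of a second index $m\in\{2,\dots,k\}$ together with a tuple from $m$ to $k$; since $k\ge2$ there is no length-one tuple from $1$ to $k$, so the two sides agree and the induction closes.

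There is no genuine obstacle here — this is the "straightforward induction" to which the lemma statement alludes — so the only point requiring care is the index bookkeeping in the last step: keeping the sign $(-1)^{\ell-1}$ consistent as a chain is lengthened, and confirming that the prepending map really is a bijection onto the set of tuples of length at least $2$.
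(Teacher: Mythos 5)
Your proof is correct, and it is precisely the ``straightforward application of induction'' that the paper alludes to when it omits the proof: an induction on the number of diagonal blocks driven by the $2\times2$ block inverse formula displayed just before the lemma, with the chain-prepending bijection handling the bookkeeping for the first block row. Nothing further is needed.
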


Now we use Proposition \ref{thm:irreducible} and Lemmas \ref{lem:Metzlerinverse} and \ref{lem:triangularinverse} to prove Theorem \ref{thm:main}.

\begin{proof}[Proof of Theorem \ref{thm:main}]
	Let $M$ be the number of classes of $A(\alpha)$, so that $M=1$ if $A(\alpha)$ is irreducible and $M\geq2$ if $A(\alpha)$ is reducible. Denote the classes of $A(\alpha)$ by $\gamma_1,\dots,\gamma_M$. In view of the discussion in Section \ref{sec:terminology} we may assume without loss of generality that $A(\alpha)$ is an $M\times M$ upper triangular block matrix with irreducible diagonal blocks. Under condition \ref{en:classes} the matrix $A(z)$ is then an $M\times M$ upper triangular block matrix with irreducible diagonal blocks for all $z\in\Omega$, and we write
	\begin{align*}
		A(z)&=\begin{bmatrix}A_{1,1}(z)&A_{1,2}(z)&\cdots&A_{1,M}(z)\\0&A_{2,2}(z)&\cdots&A_{2,M}(z)\\\vdots&\vdots&\ddots&\vdots\\0&0&\cdots&A_{M,M}(z)\end{bmatrix}.
	\end{align*}
	Lemma \ref{lem:triangularinverse} implies that when $A(z)$ is nonsingular its inverse is an $M\times M$ upper triangular block matrix of the form
	\begin{align*}
		A(z)^{-1}&=\begin{bmatrix}B_{1,1}(z)&B_{1,2}(z)&\cdots&B_{1,M}(z)\\0&B_{2,2}(z)&\cdots&B_{2,M}(z)\\\vdots&\vdots&\ddots&\vdots\\0&0&\cdots&B_{M,M}(z)\end{bmatrix},
	\end{align*}
	where the blocks $B_{j,k}(z)$ on and above the diagonal are given by \eqref{eq:blockinverse}. In particular, the diagonal blocks satisfy $B_{k,k}(z)=A_{k,k}(z)^{-1}$ for all $z$ such that $A(z)$ is nonsingular.
	
	Each block $B_{j,k}(z)$ should be understood to be a holomorphic function defined on the set obtained by removing from $\Omega$ the discrete set of points at which $A(z)$ is singular. Since $A(z)^{-1}$ is meromorphic on $\Omega$, each block $B_{j,k}(z)$ is meromorphic on $\Omega$, with a pole or removable singularity at each point at which $A(z)$ is singular. To determine the order of the pole of $A(z)^{-1}$ at $\alpha$ it suffices to identify the blocks $B_{j,k}(z)$ which have a pole at $\alpha$ and find the maximum order of the pole among those blocks.
	
	We begin by studying the behavior of the inverse submatrices $A_{k,k}(z)^{-1}$. Since $\zeta(A(\alpha))=0$, for each $k$ we have $\zeta(A_{k,k}(\alpha))=0$ if $\gamma_k$ is basic and $\zeta(A_{k,k}(\alpha))<0$ if $\gamma_k$ is nonbasic. If $\zeta(A_{k,k}(\alpha))<0$ then $A_{k,k}(\alpha)$ is nonsingular and so $\alpha$ cannot be a pole of $A_{k,k}(z)^{-1}$. Therefore, since $A_{k,k}(z)^{-1}$ is meromorphic,
	\begin{equation}
		A_{k,k}(z)^{-1}\text{ is holomorphic at }\alpha\text{ for each }k\text{ such that }\gamma_k\text{ is nonbasic.}\label{eq:holo}
	\end{equation}
	Moreover, by Lemma \ref{lem:Metzlerinverse},
	\begin{equation}
		A_{k,k}(\alpha)^{-1}\ll0\text{ for each }k\text{ such that }\gamma_k\text{ is nonbasic.}\label{eq:pos2}
	\end{equation}
	If $\zeta(A_{k,k}(\alpha))=0$ then condition \ref{en:basic} allows us to use Proposition \ref{thm:irreducible} to deduce that $\alpha$ is a simple pole of $A_{k,k}(z)^{-1}$ and that $\eta\lim_{z\to\alpha}(z-\alpha)A_{k,k}(z)^{-1}\gg0$, where $\eta\in\{-1,1\}$ does not depend on $k$. Thus,
	\begin{equation}
		A_{k,k}(z)^{-1}\text{ has a simple pole at }\alpha\text{ for each }k\text{ such that }\gamma_k\text{ is basic,}\label{eq:simple}
	\end{equation}
	and
	\begin{equation}
		\eta\lim_{z\to\alpha}(z-\alpha)A_{k,k}(z)^{-1}\gg0\text{ for each }k\text{ such that }\gamma_k\text{ is basic.}\label{eq:pos1}
	\end{equation}
	
	We now turn to determining the order of any poles at $\alpha$ of the blocks $B_{j,k}(z)$, $j\leq k$. The following discussion includes the case $j=k$, though the behavior of the diagonal blocks may already be understood from \eqref{eq:holo}--\eqref{eq:pos1}. Let $\mathcal I(j,k)$ be the collection of all integer tuples $i=(i_1,\dots,i_\ell)$ with $\ell\in\{1,\dots,k-j+1\}$ and $j=i_1<\cdots<i_\ell=k$. Lemma \ref{lem:triangularinverse} establishes that $B_{j,k}(z)=\sum_{i\in\mathcal I(j,k)}D_i(z)$ for all $z$ such that $A(z)$ is nonsingular, where
	\begin{align}
		D_i(z)&\coloneqq(-1)^{\ell-1}A_{i_1,i_1}(z)^{-1}A_{i_1,i_2}(z)A_{i_2,i_2}(z)^{-1}\cdots A_{i_{\ell-1},i_\ell}(z)A_{i_\ell,i_\ell}(z)^{-1}.\label{eq:Di}
	\end{align}
	For each $i\in\mathcal I(j,k)$, if $(\gamma_{i_1},\dots,\gamma_{i_\ell})$ is not a chain of classes of $A(\alpha)$ then $\gamma_{i_h}\npreceq\gamma_{i_{h+1}}$ for some $h$. Under condition \ref{en:classes}, we therefore have $A_{i_h,i_{h+1}}(z)=0$ for all $z\in\Omega$. Consequently,
	\begin{equation}
		D_i(z)=0\text{ if }(\gamma_{i_1},\gamma_{i_2},\dots,\gamma_{i_\ell})\text{ is not a chain.}\label{eq:notchain}
	\end{equation}
	If $(\gamma_{i_1},\dots,\gamma_{i_\ell})$ is a length zero chain of classes of $A(\alpha)$ then none of the classes $\gamma_{i_1},\dots,\gamma_{i_\ell}$ are basic. We therefore deduce from \eqref{eq:holo} that $A_{i_h,i_h}(z)^{-1}$ is holomorphic at $\alpha$ for each $h$. Consequently,
	\begin{equation}
		D_i(z)\text{ is holomorphic at }\alpha\text{ if }(\gamma_{i_1},\dots,\gamma_{i_\ell})\text{ is a chain of length zero}.\label{eq:zerochain}
	\end{equation}
	Since $B_{j,k}(z)=\sum_{i\in\mathcal I(j,k)}D_i(z)$ for all $z$ such that $A(z)$ is nonsingular, we deduce from \eqref{eq:notchain} and \eqref{eq:zerochain} that
	\begin{equation}
		\begin{minipage}{0.8\textwidth}
			$B_{j,k}(z)$ has a removable singularity at $\alpha$ if no chain from $\gamma_j$ to $\gamma_k$ has positive length.
		\end{minipage}\label{eq:holoB}
	\end{equation}
	
	It remains to consider the blocks $B_{j,k}(z)$ for which there is a positive length chain from $\gamma_j$ to $\gamma_k$. We therefore suppose that $i\in\mathcal I(j,k)$ is such that $(\gamma_{i_1},\dots,\gamma_{i_\ell})$ is a chain from $\gamma_j$ to $\gamma_k$ of length $d\geq1$. In view of \eqref{eq:holo} and \eqref{eq:simple} we may define
	\begin{equation*}
		F_k\coloneqq\begin{cases}\eta\lim_{z\to\alpha}(z-\alpha)A_{k,k}(z)^{-1}&\text{if }\gamma_k\text{ is basic}\\-A_{k,k}(\alpha)^{-1}&\text{if }\gamma_k\text{ is nonbasic.}\end{cases}
	\end{equation*}
	Among $\gamma_{i_1},\dots,\gamma_{i_\ell}$ there are exactly $d$ basic classes and $\ell-d$ nonbasic classes. Therefore, if we multiply both sides of \eqref{eq:Di} by $(-1)^{\ell-d}\eta^d(z-\alpha)^d$ and take the limit as $z\to\alpha$ then we obtain
	\begin{align}
		(-1)^{\ell-d}\eta^d\lim_{z\to\alpha}(z-\alpha)^d D_i(z)&=(-1)^{\ell-1}F_{i_1}A_{i_1,i_2}(\alpha)F_{i_2}\cdots A_{i_{\ell-1},i_\ell}(\alpha)F_{i_\ell}.\label{eq:Dilim}
	\end{align}
	In general the right-hand side of \eqref{eq:Dilim} may be zero because if $(\gamma_{i_1},,\dots,\gamma_{i_\ell})$ is not a direct chain then $A_{i_h,i_{h+1}}(\alpha)=0$ for some $h$. We may nevertheless deduce from \eqref{eq:Dilim} that
	\begin{equation}
		\begin{minipage}{0.8\textwidth}
			$D_{i}(z)$ is holomorphic at $\alpha$ or has a pole of order less than $d$ at $\alpha$ if $(\gamma_{i_1},\dots,\gamma_{i_\ell})$ is a chain of length $d\geq1$ but not a direct chain.
		\end{minipage}\label{eq:indirectchain}
	\end{equation}
	If $(\gamma_{i_1},\dots,\gamma_{i_\ell})$ is a direct chain then $A_{i_h,i_{h+1}}$ is semipositive for each $h$. From \eqref{eq:pos1} and \eqref{eq:pos2} we obtain $F_k\gg0$ for each $k$. Thus, for each $h$, every column of $A_{i_h,i_{h+1}}(\alpha)F_{i_{h+1}}$ is semipositive. The product of two square matrices with semipositive columns has semipositive columns, so $A_{i_1,i_2}(\alpha)F_{i_2}\cdots A_{i_{\ell-1},i_\ell}(\alpha)F_{i_\ell}$ has semipositive columns. Since $F_{i_1}\gg0$, we obtain
	\begin{equation*}
		F_{i_1}A_{i_1,i_2}(\alpha)F_{i_2}\cdots A_{i_{\ell-1},i_\ell}(\alpha)F_{i_\ell}\gg0.
	\end{equation*}
	Thus we deduce from \eqref{eq:Dilim} that
	\begin{equation}
		\begin{minipage}{0.8\textwidth}
			$D_i(z)$ has a pole of order $d$ at $\alpha$ if $(\gamma_{i_1},\dots,\gamma_{i_\ell})$ is a direct chain of length $d\geq1$. Moreover, $\eta^d\lim_{z\to\alpha}(\alpha-z)^dD_i(z)\ll0$.
		\end{minipage}\label{eq:poschain}
	\end{equation}
	
	Let $d_{j,k}$ be the length of the longest chain of classes from $\gamma_j$ to $\gamma_k$, or let $d_{j,k}=0$ if no such chain exists. We noted at the end of Section \ref{sec:terminology} that every chain is nested within a direct chain. Thus if $d_{j,k}\geq1$ then $d_{j,k}$ is equal to the length of the longest direct chain of classes from $\gamma_j$ to $\gamma_k$. Let $\mathcal J(j,k)$ be the collection of all integer tuples $i=(i_1,\dots,i_\ell)\in\mathcal I(j,k)$ such that $(\gamma_{i_1},\dots,\gamma_{i_\ell})$ is a direct chain of length $d_{j,k}$. Since $B_{j,k}(z)=\sum_{i\in\mathcal I(j,k)}D_i(z)$, we deduce from \eqref{eq:notchain}, \eqref{eq:zerochain}, \eqref{eq:indirectchain} and \eqref{eq:poschain} that if $d_{j,k}\geq1$ then
	\begin{equation*}
		\eta^{d_{j,k}}\lim_{z\to\alpha}(\alpha-z)^{d_{j,k}}B_{j,k}(z)=\sum_{i\in\mathcal J(j,k)}\eta^{d_{j,k}}\lim_{z\to\alpha}(\alpha-z)^{d_{j,k}}D_i(z)\ll0.
	\end{equation*}
	Consequently, we find that
	\begin{equation}
		\begin{minipage}{0.8\textwidth}
			$B_{j,k}(z)$ has a pole at $\alpha$ if there is a positive length chain from $\gamma_{j}$ to $\gamma_{k}$, and the order $d_{j,k}$ of the pole is the length of the longest such chain. Moreover, $\eta^{d_{j,k}}\lim_{z\to\alpha}(\alpha-z)^{d_{j,k}}B_{j,k}\ll0$.
		\end{minipage}\label{eq:poleB}
	\end{equation}
	The order of the pole of $A(z)^{-1}$ at $\alpha$ is equal to the maximum value attained by $d_{j,k}$ over all $j,k$ with $j\leq k$. By combining \eqref{eq:holoB} and \eqref{eq:poleB}, we find that this maximum is equal to the length of the longest chain of classes of $A(\alpha)$. This proves claim \ref{en:1}.
	
	It remains to prove claims \ref{en:2} and \ref{en:3}. With a slight abuse of notation, we write $v=[v_1^\top,\dots,v_M^\top]^\top$ and $w=[w_1^\top,\dots,w_M^\top]^\top$ conformably with the $M\times M$ upper triangular block matrix form of $A(z)^{-1}$. We then have
	\begin{align*}
		v^\top A(z)^{-1}w&=\sum_{j=1}^M\sum_{k=j}^Mv_j^\top B_{j,k}(z)w_k.
	\end{align*}
	For each $j,k$ with $j\leq k$, the function $v_j^\top B_{j,k}(z)w_k$ is zero if $v_j=0$ or $w_k=0$. Suppose instead that $v_j,w_k>0$. If there is no positive length chain of classes from $\gamma_j$ to $\gamma_k$ then we deduce from \eqref{eq:holoB} that $\alpha$ is a removable singularity of $v_j^\top B_{j,k}(z)w_k$. If there is a positive length chain of classes from $\gamma_j$ to $\gamma_k$ then we deduce from \eqref{eq:poleB} that $\alpha$ is a pole of $v_j^\top B_{j,k}(z)w_k$ with order $d_{j,k}$ equal to the length of the longest such chain, and
	\begin{align*}
		\eta^{d_{j,k}}\lim_{z\to\alpha}(\alpha-z)^{d_{j,k}}v_j^\top B_{j,k}(z)w_k&<0.
	\end{align*}
	Consequently, $\alpha$ is a pole of $v^\top A(z)^{-1}w$ with order equal to the length of the longest chain $(\gamma_{i_1},\dots,\gamma_{i_\ell})$ such that $v_{i_1},w_{i_\ell}>0$, provided that there exists such a chain with positive length; otherwise, $\alpha$ is a removable singularity of $v^\top A(z)^{-1}w$. This proves \ref{en:2} and \ref{en:3}.	
\end{proof}

To illustrate the role played by conditions \ref{en:classes} and \ref{en:basic} in Theorem \ref{thm:main}, focusing on the key assertion \ref{en:1}, consider the following example with $\Omega=\mathbb C$, $N=3$, $\alpha=0$ and
\begin{equation*}
	A(z)=\begin{bmatrix}z&1&z^2\cr0&z&1\cr0&0&z\end{bmatrix}.
\end{equation*}
The matrix $A(s)$ is Metzler for all real $s$, and the sole eigenvalue of $A(0)$ is $0$, so conditions \ref{en:Metzler} and \ref{en:zero} of Proposition \ref{thm:irreducible} are satisfied. The directed graph for $A(0)$ is simply $1\to2\to3$, and thus the classes of $A(0)$ are $\{1\}$, $\{2\}$ and $\{3\}$. All three classes are basic because $A_{1,1}(0)=A_{2,2}(0)=A_{3,3}(0)=0$. Thus the length of the longest chain of classes of $A(0)$ is $3$. Theorem \ref{thm:main} asserts that $A(z)^{-1}$ has a pole of order $3$ at $0$ if conditions \ref{en:classes} and \ref{en:basic} are satisfied. Condition \ref{en:classes} is satisfied because each entry of $A(z)$ below the diagonal is identically equal to $0$. Condition \ref{en:basic} is satisfied because each diagonal entry of $A'(z)$ is positive. Direct calculation shows that
\begin{equation*}
	A(z)^{-1}=\frac{1}{z^3}\begin{bmatrix}z^2&-z&1-z^3\cr0&z^2&-z\cr0&0&z^2\end{bmatrix},
\end{equation*}
confirming that $A(z)^{-1}$ has a pole of order $3$ at $0$.

Suppose we modify the bottom-left entry of $A(z)$ so that
\begin{equation*}
	A(z)=\begin{bmatrix}z&1&z^2\cr0&z&1\cr z^2&0&z\end{bmatrix}.
\end{equation*}
This modification of $A(z)$ satisfies conditions \ref{en:Metzler} and \ref{en:zero} of Proposition \ref{thm:irreducible} and leaves $A(0)$ unaffected. It satisfies condition \ref{en:basic} of Theorem \ref{thm:main} but violates condition \ref{en:classes}. The violation occurs because the bottom-left entry of $A(z)$ is nonzero for $z\neq0$, generating a connection from $3$ to $1$ that unifies the three classes of $A(0)$ into the single class $\{1,2,3\}$. Direct calculation shows that
\begin{equation*}
	A(z)^{-1}=\frac{-1}{z^2(z^3-z-1)}\begin{bmatrix}z^2&-z&1-z^3\cr z^2&z^2-z^4&z\cr -z^3&z^2&z^2\end{bmatrix}.
\end{equation*}
Thus $A(z)^{-1}$ has a pole of order $2$ at $0$, while the length of the longest chain of classes of $A(0)$ is $3$. This shows that Theorem \ref{thm:main} is not valid if condition \ref{en:classes} is dropped.

Next suppose that instead of modifying the bottom-left entry of $A(z)$ we modify the top-left entry so that
\begin{equation*}
	A(z)=\begin{bmatrix}z^2&1&z^2\cr0&z&1\cr0&0&z\end{bmatrix}.
\end{equation*}
As with the first modification, conditions \ref{en:Metzler} and \ref{en:zero} of Proposition \ref{thm:irreducible} continue to be satisfied and $A(0)$ is unaffected. The second modification satisfies condition \ref{en:classes} of Theorem \ref{thm:main} but violates condition \ref{en:basic}. The violation occurs because $A'_{1,1}(0)=0$. Direct calculation shows that
\begin{equation*}
	A(z)^{-1}=\frac{1}{z^4}\begin{bmatrix}z^2&-z&1-z^3\cr 0&z^3&-z^2\cr0&0&z^3\end{bmatrix}.
\end{equation*}
Thus $A(z)^{-1}$ has a pole of order $4$ at $0$, while the length of the longest chain of classes of $A(0)$ is $3$. This shows that Theorem \ref{thm:main} is not valid if condition \ref{en:basic} is dropped.

The last example leaves open the question of whether condition \ref{en:basic} can be weakened to require only that $\zeta(A_{\gamma,\gamma}(s))$ has a nonzero left- or right-derivative at $\alpha$ as a function of real $s\in\Omega$ for each basic class $\gamma$ of $A(\alpha)$. Note that the condition requires the one-sided derivatives to all have the same sign. To illustrate why the same-sign requirement cannot be dropped, consider the map $A:\mathbb C\to\mathbb C^{4\times4}$ defined by
\begin{equation*}
	A(z)=\begin{bmatrix}z&1&1&0\cr0&z&0&1\cr0&0&-z&1\cr0&0&0&z\end{bmatrix}.
\end{equation*}
This choice of $A(z)$ satisfies conditions \ref{en:Metzler} and \ref{en:zero} of Proposition \ref{thm:irreducible} with $\alpha=0$, and satisfies condition \ref{en:classes} of Theorem \ref{thm:main}. The classes of $A(0)$ are $\{1\}$, $\{2\}$, $\{3\}$ and $\{4\}$, all of which are basic. The length of the longest chain of classes is $3$, achieved by the state transitions $1\to2\to4$ and $1\to3\to4$. Direct calculation shows that
\begin{equation*}
	A(z)^{-1}=\frac{1}{z^2}\begin{bmatrix}z&-1&1&0\cr 0&z&0&-1\cr0&0&-z&1\cr0&0&0&z\end{bmatrix}.
\end{equation*}
Thus $A(z)^{-1}$ has a pole of order $2$ at $0$, not a pole of order $3$. The discrepancy arises due to the differing signs of the diagonal entries of $A'(0)$, which violates condition \ref{en:basic}.

\subsection{The Rothblum index theorem}\label{sec:Rothblum}

Theorem \ref{thm:main} bears a striking resemblance to the Rothblum index theorem, a central result in combinatorial spectral theory established in \cite{Rothblum1975}. We now elaborate upon the connection between the two results.

Let $B$ be a complex square matrix and $\lambda$ an eigenvalue of $B$. The \emph{index} of $\lambda$ is defined to be the smallest nonnegative integer $d$ such that the null space of $(B-\lambda I)^d$ is equal to the null space of $(B-\lambda I)^{d+1}$. It is necessarily a positive integer no greater than the number of rows or columns of $B$. As is well-known, $\lambda$ is a pole of $(B-z I)^{-1}$ with order equal to the index of $\lambda$; see, for instance, Theorem 3.1 and Remark 3.2 in \cite{CampbellDaners2013}. We may therefore equivalently define the index of $\lambda$ to be the order of $\lambda$ as a pole of $(B-zI)^{-1}$.

The following result, known as the Rothblum index theorem, is part 2 of Theorem 3.1 in \cite{Rothblum1975}. It is stated there for a nonnegative matrix $B$ but the extension to Metzler $B$ is immediate.

\begin{theorem}[Rothblum]\label{thm:Rothblum}
	The spectral abscissa of a Metzler matrix $B$ is an eigenvalue with index equal to the length of the longest chain of classes of $B$.
\end{theorem}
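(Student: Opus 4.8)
The plan is to derive Theorem \ref{thm:Rothblum} as an immediate consequence of Theorem \ref{thm:main}, by specializing the holomorphic matrix-valued function there to the affine family $A(z)=B-zI$. Concretely, I would set $\Omega=\mathbb{C}$ (open and connected), let $N$ be the order of $B$, and define $A\colon\mathbb{C}\to\mathbb{C}^{N\times N}$ by $A(z)=B-zI$. This map is holomorphic and is nonsingular except at the finitely many eigenvalues of $B$, so the standing hypotheses of Theorem \ref{thm:main} are met; the vectors $v,w$ (take $v=w=0$) and parts \ref{en:2} and \ref{en:3} play no role here. I would take $\alpha=\zeta(B)$, which is a real element of $\Omega$.

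Next I would check conditions \ref{en:Metzler}, \ref{en:zero}, \ref{en:classes} and \ref{en:basic}. Since subtracting $sI$ changes only the diagonal, $A(s)=B-sI$ is Metzler for every real $s$, giving \ref{en:Metzler}; and $\zeta(A(\alpha))=\zeta(B)-\zeta(B)=0$, giving \ref{en:zero}. Because diagonal entries do not affect the directed graph, $A(s)$ and $B$ have the same digraph for every $s$, hence the same classes and the same access relation $\preceq$. For any class $\gamma$ the diagonal block is $A_{\gamma,\gamma}(s)=B_{\gamma,\gamma}-sI$, so $\zeta(A_{\gamma,\gamma}(\alpha))=\zeta(B_{\gamma,\gamma})-\zeta(B)$; thus $\gamma$ is a basic class of $A(\alpha)$ if and only if $\zeta(B_{\gamma,\gamma})=\zeta(B)$, i.e.\ if and only if $\gamma$ is a basic class of $B$. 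Consequently $A(\alpha)$ and $B$ have the same basic classes, and therefore the same value of the length of the longest chain of classes. For condition \ref{en:classes}, note that for $\gamma\neq\delta$ the off-diagonal block $A_{\gamma,\delta}(z)=B_{\gamma,\delta}$ does not depend on $z$, and $\gamma\npreceq\delta$ forces $B_{\gamma,\delta}=0$, since a nonzero entry of $B_{\gamma,\delta}$ would be an off-diagonal nonzero entry of $B$ (the classes are disjoint) and would give $\gamma$ access to $\delta$. For condition \ref{en:basic}, if $\gamma$ is a basic class of $A(\alpha)$ then $\zeta(A_{\gamma,\gamma}(s))=\zeta(B_{\gamma,\gamma})-s=\zeta(B)-s$, an everywhere differentiable function of $s$ with derivative $-1<0$; so the alternative in \ref{en:basic} involving negative derivatives holds for every basic class.

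With the hypotheses verified, Theorem \ref{thm:main}\ref{en:1} gives that $\alpha=\zeta(B)$ is a pole of $A(z)^{-1}=(B-zI)^{-1}$ whose order equals the length of the longest chain of classes of $A(\alpha)$, which by the preceding paragraph is the length of the longest chain of classes of $B$. Finally, Lemma \ref{lem:PF} guarantees that $\zeta(B)$ is an eigenvalue of $B$, and the standard fact recalled immediately before the statement of the theorem (see \cite{CampbellDaners2013}) identifies the order of $\zeta(B)$ as a pole of $(B-zI)^{-1}$ with the index of the eigenvalue $\zeta(B)$. Combining these statements yields the theorem. I do not anticipate a genuine obstacle: all the analytic work is already contained in Theorem \ref{thm:main}, and what remains is the bookkeeping observation that translation by $\zeta(B)I$ preserves the classes, the access relation, and the set of basic classes, together with the trivial fact that the diagonal-block spectral abscissae of this affine family are exactly affine in $s$, so condition \ref{en:basic} holds automatically with slope $-1$.
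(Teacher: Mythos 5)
Your proposal is correct and follows essentially the same route as the paper's own proof: specialize Theorem \ref{thm:main} to $A(z)=B-zI$ on $\Omega=\mathbb{C}$ with $\alpha=\zeta(B)$, note that adding a multiple of the identity preserves the classes, chains, and basic/nonbasic classification, verify condition \ref{en:basic} via the constant slope $-1$, and translate the pole order of $(B-zI)^{-1}$ into the index using Lemma \ref{lem:PF} and the standard fact from \cite{CampbellDaners2013}. The only cosmetic difference is your explicit choice $v=w=0$, which the paper leaves implicit since parts \ref{en:2} and \ref{en:3} are not used.
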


We now show that Theorem \ref{thm:Rothblum} may be obtained as a corollary to Theorem \ref{thm:main} and the Perron-Frobenius theorem.
\begin{proof}[Proof of Theorem \ref{thm:Rothblum}]
	Lemma \ref{lem:PF}, i.e.\ the Perron-Frobenius theorem, establishes that $\zeta(B)$ is an eigenvalue of $B$. To determine the index of this eigenvalue we apply Theorem \ref{thm:main} with $\Omega=\mathbb C$ and $A(z)=B-zI$. Note that $A(z)$ is somewhere nonsingular because $B$ has finitely many eigenvalues. Also note that adding a real multiple of the identity to a Metzler matrix does not affect the classes of the matrix, the chains of classes of the matrix, or the classification of classes as basic or nonbasic. Thus, for all real $s\in\Omega$, $A(s)$ and $B$ have the same basic and nonbasic classes and the same chains of classes.
	
	To apply Theorem \ref{thm:main} we need to verify conditions \ref{en:Metzler} and \ref{en:zero} of Proposition \ref{thm:irreducible} and conditions \ref{en:classes} and \ref{en:basic} of Theorem \ref{thm:main}. Conditions \ref{en:Metzler} and \ref{en:classes} are plainly satisfied. Condition \ref{en:zero} is satisfied for $\alpha=\zeta(B)$. Condition \ref{en:basic} is satisfied because if $\gamma$ is a basic class of $A(\alpha)$, and thus of $B$, then for every real $s\in\Omega$ with $s\neq\alpha$ we have
	\begin{equation*}
		\frac{\zeta(A_{\gamma,\gamma}(s))-\zeta(A_{\gamma,\gamma}(\alpha))}{s-\alpha}=\frac{\zeta(B_{\gamma,\gamma})-s-\zeta(A(\alpha))}{s-\alpha}=\frac{\zeta(B)-s-0}{s-\alpha}=-1.
	\end{equation*}
	Thus all assumptions of Theorem \ref{thm:main} are satisfied. Conclusion \ref{en:1} of Theorem \ref{thm:main} asserts that $\alpha$ is a pole of $A(z)^{-1}$ with order $d$ equal to the length of the longest chain of classes of $A(\alpha)$. That is, $\zeta(B)$ is a pole of $(B-zI)^{-1}$ with order $d$, meaning that $\zeta(B)$ has index $d$ as an eigenvalue of $B$. Since $A(\alpha)$ and $B$ have the same basic and nonbasic classes and the same chains of classes, $d$ is equal to the length of the longest chain of classes of $B$.
\end{proof}

The foregoing argument reveals that, insofar as it extends the Perron-Frobenius theorem, the Rothblum index theorem is essentially a special case of Theorem \ref{thm:main} which arises when $A(z)$ is of the affine form $B-zI$. Keldysh's theorem for simple eigenvalues provides the bridge which has allowed us to extend the Rothblum index theorem to a more general setting with $A(z)$ holomorphic.

\section{Applications to random growth models}\label{sec:application}

We use Theorems \ref{thm:tauberian} and \ref{thm:main} to characterize the tail probabilities generated by two models of random growth.

\subsection{Stopped Markov additive process}\label{sec:continuous}

The first model of random growth we consider is the one studied in \cite{BeareSeoToda2022}. Time is indexed by $t\in[0,\infty)$. We are concerned with the growth over time of some real-valued stochastic process $W\coloneqq(W_t)_{t\geq 0}$, say the log-wealth of an economic agent. Growth is random and depends on another stochastic process $J\coloneqq(J_t)_{t\geq0}$ taking values in a finite set. The process $J$ may represent, for instance, the time-varying productivity of an economic agent.

We will assume that the bivariate stochastic process $(W,J)$ is a \emph{Markov additive process}. By this, we mean that the following conditions are satisfied.
\begin{enumerate}[label=\upshape(\roman*)]
	\item $\mathrm{P}(W_0=0)=1$.
	\item $W$ and $J$ have c\`{a}dl\`{a}g paths with probability one.
	\item $J$ is a Markov process with state space $\{1,\ldots,N\}$ for some $N\in\mathbb N$.
	\item $(W,J)$ is a Markov process with state space $\mathbb R\times\{1,\ldots,N\}$.
	\item $\mathrm E[f(W_{t+s}-W_t)g(J_{t+s})\mid(W_r,J_r)_{0\leq r\leq t}]=\mathrm E_{J_t}[f(W_s)g(J_s)]$ for each $s,t>0$, each Borel measurable function $f:\mathbb R\to\mathbb R$, and each function $g:\{1,\ldots,N\}\to\mathbb R$. Here $\mathrm E_n$ is the expected value conditional on $J_0=n$.
\end{enumerate}
This definition of a continuous-time Markov additive process is the same as in \cite[p.~309]{Asmussen2003} except that we have further required that $W$ is initialized at zero, that paths are c\`{a}dl\`{a}g, and that $J$ has finite state space. If $N=1$ then requiring that $(W,J)$ be a Markov additive process is equivalent to requiring that $W$ be a L\'{e}vy process. More generally, $W$ resembles a L\'{e}vy process subject to Markov switching. While $J$ remains in state $n$, $W$ evolves as though it were a L\'{e}vy process with L\'{e}vy exponent $\varphi_n$. As discussed in \cite[p.~309--10]{Asmussen2003}, state transitions trigger a random jump (possibly of negative or zero size) in $W$. We denote the Laplace transform for the jump when transitioning from state $m$ to state $n$ by $\psi_{m,n}$. The L\'{e}vy exponents $\varphi_n$ and Laplace transforms $\psi_{m,n}$ governing the behavior of $W$ conditional on $J$, together with the initial state probabilities and infinitesimal generator matrix governing the behavior of $J$, together determine the joint law of $(W,J)$. See \cite{Asmussen2003,BeareSeoToda2022} for further discussion. We call the process $J$ a Markov modulator.

Each L\'{e}vy exponent $\varphi_n$ and Laplace transform $\psi_{m,n}$ is defined on a subset of $\mathbb C$ called its domain. As in Section \ref{sec:intro}, we define the domain of each Laplace transform $\psi_{m,n}$ to be the set of all $z\in\mathbb C$ such that $\mathrm{E}(\e^{\Re(z)X})<\infty$, where $X$ is the random jump in $W$ triggered by a transition from state $m$ to state $n$. Similarly, we define the domain of each L\'{e}vy exponent $\varphi_n$ to be the set of all $z\in\mathbb C$ such that $\mathrm{E}(\e^{\Re(z)L_1})<\infty$, where $L=(L_t)_{t\geq0}$ is a L\'{e}vy process with L\'{e}vy exponent $\varphi_n$. The statements about tail probabilities made in the main result of this section, Theorem \ref{thm:continuous}, are vacuous if the intersection of the domains of all $\varphi_n$ and $\psi_{m,n}$ is the imaginary line. Thus we effectively require that either all the L\'{e}vy exponents $\varphi_n$ and Laplace transforms $\psi_{m,n}$ correspond to probability distributions with a light upper tail, or all correspond to probability distributions with a light lower tail. We return to this point later.

In economic applications the L\'{e}vy exponents $\varphi_n$ and Laplace transforms $\psi_{m,n}$ may be determined by lower-level model parameters. For instance, in the very simple economic model considered in \cite{BeareSeoToda2022}, $\varphi_n$ and $\psi_{m,n}$ are determined by primitive conditions placed upon the preferences of agents and by an equilibrium condition ensuring that bond markets clear. The manner in which $\varphi_n$ and $\psi_{m,n}$ are determined is immaterial to the present discussion.

The tail probabilities we seek to characterize are those of the random variable $W_T$, where $T$ is a random time. To explain how $T$ is chosen we need to introduce a subclass of Markov additive processes. Again following \cite{Asmussen2003}, we say that a Markov additive process $(V,J)\coloneqq(V_t,J_t)_{t\geq0}$ is a \emph{Markov modulated Poisson process} if it satisfies the following conditions.
\begin{enumerate}[label=\upshape(\roman*)]
	\item For each state $n$ there exists $\lambda_n\geq0$ such that, for each $s,t>0$, the distribution of $W_{s+t}-W_s$ conditional on having $J_r=n$ for all $r\in(s,s+t]$ is Poisson with expected value $\lambda_nt$.
	\item With probability one, $V$ is continuous at each point in time at which $J$ transitions between states.
\end{enumerate}
Thus a Markov modulated Poisson process is simply a Markov additive process for which each state-dependent L\'{e}vy exponent is that of a homogeneous Poisson process, and for which state transitions do not trigger jumps. The parameters $\lambda_n$ are referred to as Poisson intensities. We permit Poisson intensities of zero, but a condition to be introduced shortly will require the Poisson intensity to be positive in certain states.

To obtain the stopped process $W_T$ we suppose that in addition to our Markov additive process $(W,J)$ we have a Markov modulated Poisson process $(V,J)$, with $W$ and $V$ sharing the same Markov modulator $J$, and with $W$ and $V$ conditionally independent given $J$. We define $T\coloneqq\inf\{t\geq0:V_t>0\}$. Thus the effect is to stop $W$ at a heterogeneous Poisson rate depending on the latent Markov state.

The economic rationale for studying the tail probabilities of $W_T$ can be understood from the following heuristic observation made in \cite{Reed2001} for the case $N=1$: If the wealth of each of a continuum of independent agents in an economy has evolved according to the law of $W$ since the time they were born, and if the distribution of the ages of agents at the present moment is that of $T$, then the present cross-sectional distribution of wealth is equal to the distribution of $W_T$. The tail probabilities of $W_T$ may therefore be understood to describe the upper and lower extremes of wealth inequality.

Assumption \ref{ass:continuous} summarizes the model just described and introduces further notation and technical conditions. Within it we itemize assumptions with Roman numerals and notation with English letters.

\begin{assumption}\label{ass:continuous}
	We require that $(W,V,J)\coloneqq(W_t,V_t,J_t)_{t\geq0}$ is a stochastic process satisfying the following conditions.
	\begin{enumerate}[label=\upshape(\roman*)]
		\item $(W,J)$ is a Markov additive process.
		\item $(V,J)$ is a Markov modulated Poisson process.
		\item $W$ and $V$ are conditionally independent given $J$.
		\setcounter{counter:assumption}{\value{enumi}}
	\end{enumerate}
	We use the following notation to parametrize the law of $(W,V,J)$.
	\begin{enumerate}[label=\upshape(\alph*)]
		\item $\{1,\dots,N\}$ is the finite state space for $J$, where $N\in\mathbb N$.
		\item $\Pi\coloneqq(\pi_{m,n})$ is the $N\times N$ infinitesimal generator matrix for $J$.
		\item $\varpi\coloneqq(\varpi_{n})$ is the $N\times 1$ vector of initial state probabilities for $J_0$.
		\item $\varphi_n$ is the L\'{e}vy exponent for $W$ in state $n$.
		\item $\psi_{m,n}$ is the Laplace transform for the jump in $W$ upon transitioning from state $m$ to state $n$. \textup{(}If $\pi_{m,n}=0$ then we define $\psi_{m,n}(z)=1$ for all $z\in\mathbb C$.\textup{)}
		\item $\lambda\coloneqq(\lambda_n)$ is the $N\times1$ vector of state-dependent Poisson intensities for $V$.
		\setcounter{counter:notation}{\value{enumi}}
	\end{enumerate}
	We further require that the following conditions are satisfied.
	\begin{enumerate}[label=\upshape(\roman*)]
		\setcounter{enumi}{\value{counter:assumption}}
		\item Each initial class of $\Pi$ contains a state $n$ such that $\varpi_n>0$.\label{en:initial-continuous}
		\item Each final class of $\Pi$ contains a state $n$ such that $\lambda_n>0$.\label{en:final-continuous}
	\end{enumerate}
	We further introduce the following notation.
	\begin{enumerate}[label=\upshape(\alph*)]
		\setcounter{enumi}{\value{counter:notation}}
		\item $\Omega$ is the intersection of the domains of all $\varphi_n$ and $\psi_{m,n}$.\label{en:Omega}
		\item $\Phi:\Omega\to\mathbb C^{N\times N}$ is the $N\times N$ diagonal matrix-valued function whose $n$-th diagonal entry is the restriction of $\varphi_n$ to $\Omega$.
		\item $\Psi:\Omega\to\mathbb C^{N\times N}$ is the $N\times N$ matrix-valued function whose $(m,n)$-th entry is, for $m\neq n$, the restriction of $\psi_{m,n}$ to $\Omega$. Each diagonal entry of $\Psi$ is one.
		\item $\Lambda$ is the $N\times N$ diagonal matrix with $n$-th diagonal entry equal to $\lambda_n$.
		\item $A:\Omega\to\mathbb C^{N\times N}$ is the $N\times N$ matrix-valued function defined by
		\begin{equation*}
			A(z)=\Phi(z)+\Pi\odot\Psi(z)-\Lambda,
		\end{equation*}
		where $\odot$ is the Hadamard \textup{(}entry-wise\textup{)} product.
	\end{enumerate}
\end{assumption}

We say that a subset of $\mathbb C$ is a \emph{strip in the complex plane} if it is equal to $\{z\in\mathbb C:\Re(z)\in C\}$ for some nonempty convex set $C\subseteq\mathbb R$. The domain of every Laplace transform, and thus of every L\'{e}vy exponent, is a strip in the complex plane containing zero. Thus the set $\Omega$ defined in part \ref{en:Omega} of Assumption \ref{ass:continuous} is a strip in the complex plane containing zero.

The only conditions imposed in Assumption \ref{ass:continuous} not already discussed are parts \ref{en:initial-continuous} and \ref{en:final-continuous}. These conditions do not appear explicitly in \cite{BeareSeoToda2022}, but are implicitly present. Condition \ref{en:initial-continuous} amounts to excluding redundant Markov states; that is, states which are never visited with probability one. The same is done on p.~990 in \cite{BeareSeoToda2022}. Condition \ref{en:final-continuous} is used to guarantee that $\mathrm{P}(T<\infty)=1$, which is assumed directly at several points in \cite{BeareSeoToda2022}. Condition \ref{en:final-continuous} also ensures that $\zeta(A(0))<0$, which will be important later.
\begin{lemma}\label{lem:Tfinite-continuous}
	Let $(W_t,V_t,J_t)_{t\geq0}$ be a stochastic process satisfying Assumption \ref{ass:continuous}. Define $T\coloneqq\inf\{t\geq0:V_t>0\}$. Then $\Pr(T<\infty)=1$ and $\zeta(A(0))<0$.
\end{lemma}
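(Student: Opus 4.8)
The plan is to prove the two assertions separately. For $\Pr(T<\infty)=1$, I would condition on the entire path of the modulator $J$ and exploit the defining properties of a Markov modulated Poisson process: conditionally on $J$, the process $V$ is an inhomogeneous Poisson process whose intensity at time $t$ is $\lambda_{J_t}$. Since $V$ is nondecreasing with $V_0=0$ (it accrues only nonnegative Poisson increments and has no jumps at transition times of $J$), the event $\{T>t\}$ coincides with $\{V_t=0\}$, so that
\begin{equation*}
	\Pr(T=\infty\mid J)=\lim_{t\to\infty}\Pr(V_t=0\mid J)=\exp\Big(-\int_0^\infty\lambda_{J_s}\,\diff s\Big).
\end{equation*}
It then suffices to show $\int_0^\infty\lambda_{J_s}\,\diff s=\infty$ almost surely. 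Standard finite-state Markov chain theory gives that $J$ is, with probability one, eventually absorbed in a closed communicating class, which is a final class of $\Pi$. Condition \ref{en:final-continuous} supplies a state in that class at which $\lambda$ is positive, and $J$ occupies that state for infinite total Lebesgue time, so the integral diverges. Hence $\Pr(T=\infty\mid J)=0$ almost surely, and taking expectations gives $\Pr(T<\infty)=1$.

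For $\zeta(A(0))<0$ I would first evaluate $A(0)$. Since each $\varphi_n$ is a L\'evy exponent we have $\varphi_n(0)=0$, so $\Phi(0)=0$; and $\psi_{m,n}(0)=\E(\e^{0})=1$, so every entry of $\Psi(0)$ equals one and $\Pi\odot\Psi(0)=\Pi$. Therefore $A(0)=\Pi-\Lambda$. Writing $\mathbf 1$ for the $N\times1$ vector of ones, this matrix is Metzler (because $\Pi$ has nonnegative off-diagonal entries and $\Lambda$ is diagonal) and satisfies $(\Pi-\Lambda)\mathbf 1=\Pi\mathbf 1-\Lambda\mathbf 1=-\lambda\leq0$. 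By Lemma \ref{lem:PF}, $\zeta(A(0))$ is an eigenvalue of $A(0)$ with an associated semipositive left eigenvector $y$. Premultiplying $(\Pi-\Lambda)\mathbf 1=-\lambda$ by $y^\top$ and using $y^\top(\Pi-\Lambda)=\zeta(A(0))\,y^\top$ gives $\zeta(A(0))\,(y^\top\mathbf 1)=-y^\top\lambda\leq0$, and since $y^\top\mathbf 1>0$ we conclude $\zeta(A(0))\leq0$.

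The remaining work is to rule out $\zeta(A(0))=0$, and this is where condition \ref{en:final-continuous} enters a second time. If $\zeta(A(0))=0$ then $y^\top(\Pi-\Lambda)=0$; writing $S$ for the support of $y$, the columns of this identity indexed outside $S$ force $\pi_{m,n}=0$ whenever $m\in S$ and $n\notin S$, while $y^\top\lambda=y^\top(\Pi-\Lambda)\mathbf 1=0$ forces $\lambda_m=0$ for all $m\in S$. Thus $S$ is a nonempty set of states from which $J$ cannot escape, so every communicating class of $\Pi$ lies in $S$ or in its complement, and the sub-poset of classes contained in $S$ has a maximal element, which must then be a final class $\gamma$ of $\Pi$. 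But $\lambda$ vanishes on $\gamma\subseteq S$, contradicting condition \ref{en:final-continuous}; hence $\zeta(A(0))<0$. An alternative route is to apply the block-triangular decomposition of $\Pi-\Lambda$ described in Section \ref{sec:terminology} together with the strict Perron--Frobenius inequality on each irreducible diagonal block, using the same accounting.

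I expect the only genuine subtleties to be bookkeeping ones: justifying the conditional inhomogeneous-Poisson representation of $V$ given $J$ from the two defining conditions of a Markov modulated Poisson process (partitioning $[0,t]$ at the transition times of $J$ and summing independent conditional Poisson increments), and handling the reducibility of $\Pi-\Lambda$ in the strictness step. Neither requires input beyond Lemma \ref{lem:PF} and elementary properties of finite continuous-time Markov chains. Note also that condition \ref{en:initial-continuous} plays no role here; only \ref{en:final-continuous} is used.
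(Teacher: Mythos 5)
Your proof is correct, and the second half takes a genuinely different route from the paper. For $\Pr(T<\infty)=1$ you argue exactly as the paper does (eventual absorption of $J$ in a final class, positive intensity somewhere in that class by condition \ref{en:final-continuous}), only more explicitly via the conditional inhomogeneous-Poisson representation of $V$ given $J$; the measure-theoretic identification $\{T>t\}=\{V_t=0\}$ is only an almost-sure/limit statement, but that does not affect the conclusion. For $\zeta(A(0))<0$ the paper instead invokes the block upper triangular decomposition of $\Pi-\Lambda$ into classes, the fact \citep[Corollary 3.5]{Rothblum1975} that the basic classes of the generator $\Pi$ are its final classes, and a strict monotonicity property of the spectral abscissa \citep[Theorem A.5]{BeareSeoToda2022}, based on \cite{Deutsch1975}, applied blockwise (it also notes the shortcut that Proposition 3.2 of \cite{BeareSeoToda2022} gives $\Pr(T<\infty)=1\iff\zeta(A(0))<0$). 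Your argument is a self-contained left Perron-eigenvector support argument: $(\Pi-\Lambda)\mathbf 1=-\lambda\leq0$ gives $\zeta(A(0))\leq0$, and if $\zeta(A(0))=0$ the support $S$ of a semipositive left null vector is a closed set of states on which $\lambda$ vanishes, which must contain a final class, contradicting \ref{en:final-continuous}. This buys independence from the cited monotonicity theorem and from Rothblum's corollary, at the cost of the combinatorial bookkeeping about $S$ that the paper's blockwise argument avoids; both are valid. One cosmetic slip: you wrote $y^\top\lambda=y^\top(\Pi-\Lambda)\mathbf 1$, whereas in fact $y^\top(\Pi-\Lambda)\mathbf 1=-y^\top\lambda$; since both sides are zero under your hypothesis, the conclusion $\lambda_m=0$ for $m\in S$ is unaffected.
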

\begin{proof}
	The Markov modulator $J$ is, with probability one, eventually confined to one of the final classes of $\Pi$, and visits each state in that final class infinitely often. Condition \ref{en:final-continuous} in Assumption \ref{ass:continuous} requires that each final class contains a state $n$ for which the Poisson intensity $\lambda_n$ is strictly positive. Thus $\mathrm{P}(T<\infty)=1$.
	
	Proposition 3.2 in \cite{BeareSeoToda2022} establishes that $\mathrm{P}(T<\infty)=1$ if and only if $\zeta(A(0))<0$. Alternatively, one may observe that the basic classes of the infinitesimal generator $\Pi$ are precisely its final classes; see e.g.\ Corollary 3.5 in \cite{Rothblum1975}. Therefore, since $\zeta(\Pi)=0$ (because $\Pi$ is Metzler with zero row sums), we have $\zeta(\Pi_{\gamma,\gamma})=0$ if $\gamma$ is a final class of $\Pi$ and $\zeta(\Pi_{\gamma,\gamma})<0$ if $\gamma$ is a nonfinal class of $\Pi$. By applying a monotonicity property of the spectral abscissa---see Theorem A.5 in \cite{BeareSeoToda2022}, based on Corollary 1 in \cite{Deutsch1975}---we deduce that $\zeta(\Pi_{\gamma,\gamma}-\Lambda_{\gamma,\gamma})<0$ for every class $\gamma$ of $\Pi$ under condition \ref{en:final-continuous} in Assumption \ref{ass:continuous}. Since $A(0)=\Pi-\Lambda$ and $\Lambda$ is diagonal, it follows that $\zeta(A(0))<0$.
\end{proof}

Our main result in this section is the following characterization of the tail probabilities of $W_T$. As in Section \ref{sec:intro}, we denote by $\Omega^\circ$ the interior of $\Omega$.

\begin{theorem}\label{thm:continuous}
	Let $(W_t,V_t,J_t)_{t\geq0}$ be a stochastic process satisfying Assumption \ref{ass:continuous}. Define $T\coloneqq\inf\{t\geq0:V_t>0\}$, noting that $\Pr(T<\infty)=1$ by Lemma \ref{lem:Tfinite-continuous}. If there exists a positive real number $\alpha$ in $\Omega^\circ$ such that $\zeta(A(\alpha))=0$ then
	\begin{equation}\label{eq:thmcts1}
		0<\liminf_{w\to\infty}w^{-d_\alpha+1}\e^{\alpha w}\Pr(W_T>w)\leq\limsup_{w\to\infty}w^{-d_\alpha+1}\e^{\alpha w}\Pr(W_T>w)<\infty,
	\end{equation}
	where $d_\alpha$ is the length of the longest chain of classes of $A(\alpha)$. If there exists a negative real number $-\beta$ in $\Omega^\circ$ such that $\zeta(A(-\beta))=0$ then
	\begin{equation}\label{eq:thmcts2}
		0<\liminf_{w\to\infty}w^{-d_{-\beta}+1}\e^{\beta w}\Pr(W_T<-w)\leq\limsup_{w\to\infty}w^{-d_{-\beta}+1}\e^{\beta w}\Pr(W_T<-w)<\infty,
	\end{equation}
	where $d_{-\beta}$ is the length of the longest chain of classes of $A(-\beta)$.
\end{theorem}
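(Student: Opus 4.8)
The plan is to identify the tail of $W_T$ with that of a random variable whose Laplace transform has the form \eqref{eq:Laplaceform}, to check that the relevant matrix-valued function meets the hypotheses of Theorem~\ref{thm:main}, and then to apply Theorem~\ref{thm:tauberian}. I would prove \eqref{eq:thmcts1} in full and obtain \eqref{eq:thmcts2} by running the same argument for $-W_T$: this amounts to replacing each $\varphi_n(z)$ and $\psi_{m,n}(z)$ by $\varphi_n(-z)$ and $\psi_{m,n}(-z)$, hence $A(z)$ by $A(-z)$, while leaving $\Pi$, $\varpi$ and $\lambda$---and therefore conditions \ref{en:initial-continuous} and \ref{en:final-continuous}---untouched, so that the hypothesis on $-\beta$ becomes the hypothesis on $\alpha$ for $-W_T$ and $\Pr(-W_T>w)=\Pr(W_T<-w)$. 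For the first step I would follow \cite{BeareSeoToda2022}: conditioning on $J_0$ and writing $\E(\e^{zW_T})$ as the time integral of the Feynman--Kac semigroup $\e^{tA(z)}$ against the killing intensities $\lambda$ shows that for every real $s$ with $\zeta(A(s))<0$ one has $\E(\e^{sW_T})<\infty$ and $\varphi(s)=-\varpi^\top A(s)^{-1}\lambda$, where $\varphi$ denotes the Laplace transform of $W_T$; thus $\varphi$ plays the role of $-v^\top A(\cdot)^{-1}w$ with the nonnegative vectors $v=\varpi$ and $w=\lambda$. (One checks $-\varpi^\top A(0)^{-1}\lambda=1$ using that $\Pi$ has zero row sums and $\varpi$ sums to one.)

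Next I would verify the hypotheses of Theorem~\ref{thm:main}, taking its $\Omega$ to be the interior of the strip $\Omega$ from Assumption~\ref{ass:continuous}, which is open, connected, contains $\alpha$, and on which $A$ is holomorphic and nonsingular near $0$ (Lemma~\ref{lem:Tfinite-continuous}). Condition \ref{en:Metzler} holds because the off-diagonal entries of $A(s)$ are $\pi_{m,n}\psi_{m,n}(s)\geq0$, and \ref{en:zero} is the hypothesis $\zeta(A(\alpha))=0$. For \ref{en:classes}, since $\psi_{m,n}(\alpha)\in(0,\infty)$ the graph $G(A(\alpha))$ coincides with $G(\Pi)$, so $A(\alpha)$ and $\Pi$ have the same classes, the same partial order, and the same initial and final classes; whenever $\gamma\npreceq\delta$ among these classes, $\Pi_{\gamma,\delta}=0$, and since the diagonal matrices $\Phi$, $\Lambda$ do not contribute off-diagonal blocks, $A_{\gamma,\delta}(z)=\Pi_{\gamma,\delta}\odot\Psi_{\gamma,\delta}(z)=0$ for all $z$. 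The same observation shows $A(s)$ is block upper triangular in the class ordering, so $\zeta(A(s))=\max_k\zeta(A_{\gamma_k,\gamma_k}(s))$ for real $s$. For \ref{en:basic} I would use that each $\zeta(A_{\gamma_k,\gamma_k}(\cdot))$ is convex on the real part of $\Omega$ (its entries are log-convex functions of $s$, so this follows from the convexity of the spectral abscissa established in \cite{BeareSeoToda2022}); combined with $\zeta(A(0))<0$ and $\zeta(A(\alpha))=0$ this gives $\zeta(A(s))\leq(1-s/\alpha)\zeta(A(0))<0$ for $s\in[0,\alpha)$, so for every basic class $\gamma$ of $A(\alpha)$ the convex function $\zeta(A_{\gamma,\gamma}(\cdot))$ is negative on $[0,\alpha)$ and zero at $\alpha$, hence has left-derivative at $\alpha$ bounded below by $-\zeta(A_{\gamma,\gamma}(0))/\alpha>0$. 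Thus \ref{en:basic} holds uniformly with a positive sign.

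With the hypotheses verified, Theorem~\ref{thm:main}\ref{en:1} gives that $\alpha$ is a pole of $A(z)^{-1}$ of order $d_\alpha$ (the length of the longest chain of classes of $A(\alpha)$), and $d_\alpha\geq1$ since every Metzler matrix has a basic class. To invoke part \ref{en:2}, I would start from a chain of length $d_\alpha$ and repeatedly prepend strict predecessors and append strict successors; since $d_\alpha$ is maximal, each class added this way must be nonbasic, so after finitely many steps one obtains a chain $(\gamma_1,\dots,\gamma_\ell)$ of length $d_\alpha$ with $\gamma_1$ initial and $\gamma_\ell$ final for $A(\alpha)$, hence for $\Pi$. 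By \ref{en:initial-continuous} and \ref{en:final-continuous}, $\max_{m\in\gamma_1}\varpi_m>0$ and $\max_{n\in\gamma_\ell}\lambda_n>0$, and no chain with these endpoint properties can exceed length $d_\alpha$, so Theorem~\ref{thm:main}\ref{en:2} shows $\alpha$ is a pole of $\varphi(z)=-\varpi^\top A(z)^{-1}\lambda$ of order exactly $d_\alpha$. Finally I would confirm that $\alpha$ is the right abscissa of convergence of $\varphi$: it is at least $\alpha$ because $\E(\e^{sW_T})=-\varpi^\top A(s)^{-1}\lambda<\infty$ on $[0,\alpha)$, and at most $\alpha$ because if $\varphi^\circ$ extended holomorphically past $\alpha$ then, agreeing with the meromorphic function $-\varpi^\top A(z)^{-1}\lambda$ on a real interval accumulating at $\alpha$, it would force that function to be bounded near $\alpha$ by the identity theorem, contradicting the pole. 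Theorem~\ref{thm:tauberian}, applied to $X=W_T$ with meromorphic extension $-\varpi^\top A(\cdot)^{-1}\lambda$ of $\varphi^\circ$ and pole order $d=d_\alpha$, then yields \eqref{eq:thmcts1}.

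The step I expect to be the main obstacle is the use of part \ref{en:2}: one must be sure that the longest chain whose two ends ``see'' the vectors $\varpi$ and $\lambda$ is no shorter than the longest chain of classes of $A(\alpha)$ overall. This is precisely where conditions \ref{en:initial-continuous} and \ref{en:final-continuous} enter, through the chain-extension argument, and it is the reason those two conditions are imposed. A second, more routine, subtlety is pinning the right abscissa of convergence exactly at $\alpha$ rather than merely bounding it, which combines the convexity of $\zeta(A(\cdot))$ with the analytic continuation furnished by the formula $\varphi=-\varpi^\top A(\cdot)^{-1}\lambda$.
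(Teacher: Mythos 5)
Your proposal is correct and follows essentially the same route as the paper: the Laplace-transform formula $\varphi(z)=-\varpi^\top A(z)^{-1}\lambda$ from \cite{BeareSeoToda2022}, verification of the hypotheses of Theorem \ref{thm:main} via the class-structure lemma and convexity of $\zeta(A_{\gamma,\gamma}(\cdot))$, the use of conditions \ref{en:initial-continuous} and \ref{en:final-continuous} to make the endpoint restrictions in part \ref{en:2} non-binding, and finally Theorem \ref{thm:tauberian}, with the lower tail handled symmetrically. Your chain-extension argument for the redundancy of the endpoint restrictions is simply a more explicit version of what the paper asserts, so no substantive difference remains.
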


Theorem \ref{thm:continuous} is a generalization of Theorem 3.1 in \cite{BeareSeoToda2022}. The latter result requires $\Pi$ to be irreducible. In this case $A(\alpha)$ and $A(-\beta)$ are also irreducible, so that $d_\alpha=d_{-\beta}=1$, and the multiplicative factors $w^{-d_\alpha+1}$ and $w^{-d_{-\beta}+1}$ vanish. Thus the tail probabilities of $W_T$ resemble those of an exponential distribution. When $\Pi$ is reducible this need not be the case. Recalling our discussion of exponential versus Erlang tail probabilities in Section \ref{sec:intro}, we learn from Theorem \ref{thm:continuous} that the tail probabilities of $W_T$ in general resemble those of an Erlang distribution, and learn how the parameters of the relevant Erlang distribution (i.e., $d_\alpha$ and $\alpha$ for the upper tail, and $d_{-\beta}$ and $\beta$ for the lower tail) are determined by the parameters of the random growth model.

The Erlang parameters $\alpha$ and $\beta$ for the upper and lower tail probabilities are determined by the equation $\zeta(A(s))=0$. The characterization of upper tail probabilities provided in Theorem \ref{thm:continuous} requires this equation to admit a positive solution in $\Omega^\circ$, while the characterization of lower tail probabilities requires it to admit a negative solution in $\Omega^\circ$. Of course, a necessary condition to have a positive or negative solution is that there exists some positive or negative number in $\Omega^\circ$. Thus, as alluded to earlier, our characterization of upper (lower) tail probabilities requires all of the L\'{e}vy exponents $\varphi_n$ and Laplace transforms $\psi_{m,n}$ to include positive (negative) real numbers in their domain, and thus be associated with probability distributions which have a light upper (lower) tail. This is natural: an Erlang distribution is itself light-tailed, so we cannot expect $W_T$ to have an upper or lower tail resembling that of an Erlang distribution if the corresponding tail of $W_t$ is not light for each fixed $t$.

If either a positive or negative solution to the equation $\zeta(A(s))=0$ exists in $\Omega^\circ$ then it is the unique positive or negative solution in $\Omega^\circ$. This is a consequence of the fact that $\zeta(A(0))<0$ (by Lemma \ref{lem:Tfinite-continuous}) and of the fact that $\zeta(A(s))$ is convex as a function of real $s\in\Omega$. The latter fact is an implication of the following result of Nussbaum \cite{Nussbaum1986}; see inequalities (1.2) and (1.4) in Theorem 1.1 therein.
\begin{lemma}[Nussbaum]\label{lem:Nussbaum}
	Let $C$ be a nonempty convex subset of the real line. Let $N$ be a natural number. Let $F:C\to\mathbb R^{N\times N}$ and $G:C\to\mathbb R^{N\times N}$ be matrix-valued functions. Assume that $F(s)$ is nonnegative for each $s\in C$, and that $G(s)$ is diagonal for each $s\in C$. Assume further that each entry of $F$ is either log-convex or identically zero, and that each diagonal entry of $G$ is convex. Then $\zeta(F(s)+G(s))$ is a convex function of $s\in C$.
\end{lemma}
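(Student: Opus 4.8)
The plan is to deduce the convexity of $s\mapsto\zeta(F(s)+G(s))$ from a log-convexity property of the matrix exponential, a semigroup analogue of the classical fact (Kingman's theorem) that a nonnegative matrix with log-convex entries has log-convex spectral radius. Write $M(s)\coloneqq F(s)+G(s)$; because the off-diagonal entries of $M(s)$ are exactly the off-diagonal entries of $F(s)$, which are nonnegative, $M(s)$ is a Metzler matrix for every $s\in C$, and $\e^{tM(s)}$ therefore has nonnegative entries for $t\geq0$. The first ingredient is the standard identity, valid for any square matrix $M$ and any submultiplicative matrix norm,
\begin{equation*}
	\zeta(M)=\lim_{t\to\infty}\frac1t\log\lVert\e^{tM}\rVert,
\end{equation*}
which follows from the spectral mapping theorem ($\rho(\e^{tM})=\e^{t\zeta(M)}$), Gelfand's formula, and the sandwich $\e^{t\zeta(M)}=\rho(\e^{tM})\leq\lVert\e^{tM}\rVert\leq c(1+t)^{N-1}\e^{t\zeta(M)}$ obtained from the Jordan form. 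Taking $\lVert A\rVert=\sum_{i,j}\lvert A_{ij}\rvert$, which equals $\sum_{i,j}A_{ij}$ when $A\geq0$, and using that a pointwise limit of finite-valued convex functions is convex (the limit here being the real number $\zeta(M(s))$), we reduce the lemma to the assertion that, for each fixed $t>0$, the map
\begin{equation*}
	s\longmapsto\sum_{i,j}\bigl(\e^{tM(s)}\bigr)_{ij}
\end{equation*}
is log-convex on $C$.

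The core step is to show that each entry $s\mapsto(\e^{tM(s)})_{ij}$ is either log-convex on $C$ or identically zero. Split $M(s)=D(s)+N(s)$ into its diagonal and off-diagonal parts. The $k$-th diagonal entry of $D(s)$ is $f_{kk}(s)+g_{kk}(s)$; since $f_{kk}$ is log-convex---hence convex---or identically zero, and $g_{kk}$ is convex, the function $d_k(s)\coloneqq f_{kk}(s)+g_{kk}(s)$ is convex, so $s\mapsto\e^{\tau d_k(s)}$ is log-convex for every $\tau\geq0$. The off-diagonal entries of $N(s)$ are off-diagonal entries of $F(s)$, hence log-convex or identically zero. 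Apply now the Dyson (Duhamel) expansion
\begin{equation*}
	\e^{tM(s)}=\sum_{n=0}^{\infty}\int_{\Delta_n(t)}\e^{\tau_0 D(s)}N(s)\e^{\tau_1 D(s)}N(s)\cdots N(s)\e^{\tau_n D(s)}\,\diff\tau,\qquad\Delta_n(t)\coloneqq\Bigl\{\tau\geq0:\textstyle\sum_{h=0}^{n}\tau_h=t\Bigr\}.
\end{equation*}
Since each $\e^{\tau_h D(s)}$ is diagonal, the $(i,j)$ entry of the ordered product inside the integral is, for fixed $\tau$, a finite sum over chains of intermediate indices of products of factors of the form $\e^{\tau_h d_k(s)}$ and off-diagonal entries of $F(s)$; as a function of $s$ this is therefore log-convex or identically zero. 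The class $\mathcal L$ of functions on $C$ that are either log-convex or identically zero is closed under pointwise products, under finite or countable sums, and under integration against a positive measure, each closure property following from H\"older's inequality. Hence $(\e^{tM(s)})_{ij}\in\mathcal L$ for all $i,j,t$. Finally, every term of the Dyson series is entrywise nonnegative and the $n=0$ term equals $\e^{tD(s)}$, so $(\e^{tM(s)})_{ii}\geq\e^{td_i(s)}>0$; thus $s\mapsto\sum_{i,j}(\e^{tM(s)})_{ij}$ is a sum of members of $\mathcal L$ at least one of which is strictly positive, and is therefore strictly positive and log-convex on $C$. This yields the lemma.

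I expect the main obstacle to be the bookkeeping for the Dyson series---writing the entrywise expansion of the ordered-product integrand transparently---and, relatedly, checking that membership in $\mathcal L$ survives the passage first through the simplex integrals $\int_{\Delta_n(t)}$ and then through the infinite sum over $n$. The substance there is the closure of $\mathcal L$ under integration and countable summation, which comes down to H\"older's inequality applied to integrals and to series; the only delicacy is ensuring the series converges (it does, since it converges to the finite entries of $\e^{tM(s)}$) so that the inequality passes to the limit. An essentially equivalent presentation uses the Trotter product formula $\e^{tM(s)}=\lim_{k\to\infty}\bigl(\e^{(t/k)F(s)}\e^{(t/k)G(s)}\bigr)^{k}$: here $\e^{(t/k)G(s)}$ is diagonal with log-convex entries, $\e^{(t/k)F(s)}$ has entries in $\mathcal L$ by the same Dyson device applied to $F$ alone, each matrix power lies entrywise in $\mathcal L$, and since the corresponding sum of all entries is strictly positive its pointwise limit $\sum_{i,j}(\e^{tM(s)})_{ij}$ is again log-convex. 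I would use whichever of these two versions reads more cleanly.
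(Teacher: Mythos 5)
Your argument is correct, but it is worth noting that the paper itself does not prove this lemma at all: it is stated as a known result and attributed to Nussbaum, with a citation to inequalities (1.2) and (1.4) of Theorem 1.1 in his 1986 paper, so there is no in-paper proof to match. Your self-contained route --- writing $M(s)=F(s)+G(s)$, noting it is Metzler so $\e^{tM(s)}\geq0$, reducing $\zeta(M(s))=\lim_{t\to\infty}t^{-1}\log\sum_{i,j}(\e^{tM(s)})_{ij}$ via the spectral mapping theorem and the Jordan-form sandwich, and then establishing entrywise log-convexity in $s$ of $\e^{tM(s)}$ through the Dyson/Duhamel expansion (or Trotter splitting), using that the class of log-convex-or-zero functions is closed under products, countable sums and integration by H\"older --- is a valid and essentially elementary proof, in effect a continuous-time analogue of Kingman's theorem on the log-convexity of the spectral radius of a nonnegative matrix with log-convex entries. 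The steps you flag as delicate are indeed the only ones needing care and they all go through: the simplex integrands are continuous in $\tau$, the series has nonnegative terms converging to the finite entries of $\e^{tM(s)}$ so the multiplicative inequality passes to the limit, the diagonal $n=0$ term gives strict positivity of the entry sum, and the pointwise limit in $t$ of finite convex functions is convex. What the paper's citation buys is brevity and the greater generality of Nussbaum's theorem (which covers nonlinear, cone-preserving settings); what your argument buys is a short, self-contained proof using only the matrix exponential and H\"older's inequality, which would make the article independent of that reference for this particular step.
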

Convexity of $\zeta(A(s))$ follows from Lemma \ref{lem:Nussbaum} by taking $F(s)=\Pi\odot\Psi(s)$ and $G(s)=\Phi(s)-\Lambda$. Note that every Laplace transform is log-convex on the real part of its domain, and every L\'{e}vy exponent is convex on the real part of its domain. The fact that $\zeta(A(s))$ is a convex function of $s$ depending in a simple way on the parameters of our random growth model makes it easy to determine in specific applications whether $\alpha$ and/or $\beta$ exist. We refer to \cite{BeareSeoToda2022} for further discussion of the existence of $\alpha$ and $\beta$, including a simple sufficient condition for existence.

Our proof of Theorem \ref{thm:continuous} involves verifying that the Laplace transform for $W_T$ behaves at $\alpha$ and/or $-\beta$ in a manner permitting the application of Theorem \ref{thm:tauberian}. Proposition 2.2 in \cite[p.~311]{Asmussen2003} establishes that the Laplace transform for $W_t$, with $t$ fixed, satisfies
\begin{equation*}
	\mathrm{E}\left(\e^{zW_t}\right)=\varpi^\top\exp\left(t(\Phi(z)+\Pi\odot\Psi(z))\right)1_N
\end{equation*}
on its domain, where $\exp$ is the matrix exponential function and $1_N$ is an $N\times1$ vector of ones. Building on this result, Propositions 3.3 and 3.4 in \cite{BeareSeoToda2022} establish (without assuming irreducibility of $\Pi$) the following characterization of the Laplace transform for $W_T$. We do not repeat the proof here.
\begin{lemma}\label{lem:MGF-continuous}
	Let $W$ and $T$ be as in Theorem \ref{thm:continuous}. Let $z$ be a point in $\Omega$ such that $\zeta(A(\Re(z)))<0$. Then $A(z)$ is nonsingular, and
	\begin{equation*}
		\mathrm{E}\left(\e^{zW_T}\right)=-\varpi^\top A(z)^{-1}\lambda.
	\end{equation*}
\end{lemma}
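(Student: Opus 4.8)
The plan is to read the first-jump time $T$ of $V$ as a killing of the Markov additive process $(W,J)$ at a state-dependent Poisson rate, and to deduce the formula from the matrix-exponential description of Markov additive processes. I would first dispose of the nonsingularity claim. For any $z\in\Omega$ the bounds $|\psi_{m,n}(z)|\leq\psi_{m,n}(\Re(z))$ and $\Re\varphi_n(z)\leq\varphi_n(\Re(z))$ hold, because $|\E(\e^{zY})|\leq\E(\e^{\Re(z)Y})$ for the relevant jump $Y$ and, likewise, for the unit-time value of a L\'{e}vy process with exponent $\varphi_n$. Thus $A(z)$ is dominated---entrywise off the diagonal and in real part on the diagonal---by the Metzler matrix $A(\Re(z))$, and a standard comparison argument for spectral abscissae (of the kind invoked in the proof of Lemma \ref{lem:Tfinite-continuous}, valid for complex matrices as well, e.g.\ by applying Perron--Frobenius to $A(\Re(z))+cI$ for $c$ large) gives $\zeta(A(z))\leq\zeta(A(\Re(z)))$. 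When $\zeta(A(\Re(z)))<0$ every eigenvalue of $A(z)$ then has negative real part; in particular $A(z)$ is nonsingular, $\e^{tA(z)}\to0$ as $t\to\infty$, and $\int_0^t\e^{rA(z)}\diff r\to-A(z)^{-1}$.

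For the identity I would condition on the full path of the modulator $J$. Since $V$ is a Markov modulated Poisson process, conditional on $J$ the time $T$ has survival function $t\mapsto\exp(-\int_0^t\lambda_{J_s}\diff s)$; and since $W$ and $V$ are conditionally independent given $J$ while $T$ is $\sigma(V,J)$-measurable, $W$ and $T$ are conditionally independent given $J$. Writing $h(t)$ for the row vector with $h(t)_n=\E[\e^{zW_t}\exp(-\int_0^t\lambda_{J_s}\diff s)\,\mathbf 1_{\{J_t=n\}}]$, conditioning on $J$ and then interchanging expectation and integration (justified below) gives
\begin{equation*}
	\E(\e^{zW_T})=\int_0^\infty\E\Big[\e^{zW_t}\,\lambda_{J_t}\exp\Big(-\int_0^t\lambda_{J_s}\diff s\Big)\Big]\diff t=\int_0^\infty h(t)\,\lambda\,\diff t.
\end{equation*}
The crux is the identification $h(t)=\varpi^\top\e^{tA(z)}$. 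Heuristically, $h$ solves the linear system $h'(t)=h(t)A(z)$, $h(0)=\varpi^\top$: over $[t,t+\diff t]$, given $J_t=n$, the factor $\e^{zW_t}$ is multiplied in expectation by $1+\varphi_n(z)\diff t$, the discount contributes $-\lambda_n\diff t$, and a transition $n\to m$ (rate $\pi_{n,m}$, jump Laplace transform $\psi_{n,m}$) moves mass to coordinate $m$ at rate $\pi_{n,m}\psi_{n,m}(z)$; this assembles into $\Phi(z)+\Pi\odot\Psi(z)-\Lambda=A(z)$. Rigorously this says that killing $(W,J)$ at the independent, state-driven Poisson rate $\lambda_n$ replaces the matrix exponent $\Phi(z)+\Pi\odot\Psi(z)$ in Asmussen's formula $\E(\e^{zW_t})=\varpi^\top\exp(t(\Phi(z)+\Pi\odot\Psi(z)))1_N$ \cite[Prop.~2.2]{Asmussen2003} by $A(z)$; one proves it either by adjoining an absorbing cemetery state reached from $n$ at rate $\lambda_n$---whereupon the enlarged process is Markov additive with matrix exponent $\left[\begin{smallmatrix}A(z)&\lambda\\0&0\end{smallmatrix}\right]$, so the matrix form of Asmussen's formula yields $\E(\e^{zW_{t\wedge T}})=\varpi^\top\e^{tA(z)}1_N+\varpi^\top(\int_0^t\e^{rA(z)}\diff r)\lambda$---or by differentiating $h$ through the Markov additive property. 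This is precisely Propositions 3.3 and 3.4 of \cite{BeareSeoToda2022}.

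Granting $h(t)=\varpi^\top\e^{tA(z)}$, the display gives $\E(\e^{zW_T})=\varpi^\top\big(\int_0^\infty\e^{tA(z)}\diff t\big)\lambda=-\varpi^\top A(z)^{-1}\lambda$ by the first paragraph; equivalently, in the cemetery-state formulation, one lets $t\to\infty$ in $\E(\e^{zW_{t\wedge T}})$, the first summand vanishing and $W_{t\wedge T}\to W_T$ since $\Pr(T<\infty)=1$. To justify the interchanges I would first run the whole computation with the real number $s=\Re(z)$ in place of $z$: all integrands are nonnegative, Tonelli applies unconditionally, and monotone convergence with $\Pr(T<\infty)=1$ (Lemma \ref{lem:Tfinite-continuous}) yields $\E(\e^{sW_T})=-\varpi^\top A(s)^{-1}\lambda<\infty$ whenever $\zeta(A(s))<0$; since $|\e^{zW_T}|=\e^{(\Re z)W_T}$ and $\int_0^\infty\E[\e^{(\Re z)W_t}\lambda_{J_t}\exp(-\int_0^t\lambda_{J_s}\diff s)]\diff t<\infty$, this provides the integrable dominant for the complex case. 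I expect the sole genuine obstacle to be the identification $h(t)=\varpi^\top\e^{tA(z)}$---that killing a Markov additive process at an independent state-driven Poisson rate is implemented by subtracting the diagonal rate matrix from the matrix exponent; the surrounding domination and matrix-exponential limiting arguments are routine.
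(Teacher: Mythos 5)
Your proposal is correct in outline, but note that the paper itself does not prove this lemma: it simply invokes Propositions 3.3 and 3.4 of \cite{BeareSeoToda2022} ("We do not repeat the proof here") and adds only the algebraic remark that $\varpi^\top A(z)^{-1}A(0)1_N=-\varpi^\top A(z)^{-1}\lambda$ because $A(0)=\Pi-\Lambda$, $\Pi 1_N=0$ and $\Lambda 1_N=\lambda$. What you have done is reconstruct the cited argument itself: killing $(W,J)$ at the state-dependent rate $\lambda_{J_t}$, implemented via a cemetery state so that Asmussen's matrix-exponential formula yields $\E(\e^{zW_{t\wedge T}})=\varpi^\top\e^{tA(z)}1_N+\varpi^\top\big(\int_0^t\e^{rA(z)}\diff r\big)\lambda$, and then letting $t\to\infty$; your route even delivers the $-\varpi^\top A(z)^{-1}\lambda$ form directly, bypassing the paper's conversion remark, and your device of first running the computation at $s=\Re(z)$ to obtain a Tonelli/domination justification for the complex case is sound. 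The one place where your sketch is too glib is the inequality $\zeta(A(z))\leq\zeta(A(\Re(z)))$: the bare shift-and-Perron--Frobenius trick does not handle the diagonal entries of $A(z)$, whose imaginary parts need not vanish (if $\Re a_{nn}(z)=a_{nn}(\Re(z))$ but $\Im a_{nn}(z)\neq0$, no finite shift $c$ makes $|a_{nn}(z)+c|\leq a_{nn}(\Re(z))+c$). The statement is nonetheless true and standard; a clean route is the entrywise bound $|\e^{tA(z)}|\leq\e^{tA(\Re(z))}$ (split $A$ into diagonal and off-diagonal parts and use Lie--Trotter or the power series) together with $\rho(M)\leq\rho(|M|)$ and monotonicity of the spectral radius for nonnegative matrices, which gives $\e^{t\zeta(A(z))}\leq\e^{t\zeta(A(\Re(z)))}$; this is the step that \cite{BeareSeoToda2022} supplies in proving their Proposition 3.3. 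With that repair, your argument is a legitimate self-contained proof of the lemma, whereas the paper's treatment is a citation.
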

Note that the formula provided in Proposition 3.4 in \cite{BeareSeoToda2022} is $\mathrm{E}(\e^{zW_T})=\varpi^\top A(z)^{-1}A(0)1_N$. This is equivalent to the formula provided in Lemma \ref{lem:MGF-continuous} because $A(0)=\Pi-\Lambda$, $\Pi1_N=0$ and $\Lambda1_N=\lambda$.

We require one further lemma, easily proved, for our proof of Theorem \ref{thm:continuous}.

\begin{lemma}\label{lem:samechains-continuous}
	Let $\Pi$ and $A:\Omega\to\mathbb C^{N\times N}$ be the $N\times N$ matrix and matrix-valued function defined in Assumption \ref{ass:continuous}. Then, for every $z\in\Omega$ and every $m,n\in\{1,\dots,N\}$ with $m\neq n$, the $(m,n)$-entry of $A(z)$ is zero if the $(m,n)$-entry of $\Pi$ is zero. Moreover, for every real $s\in\Omega$ and every $m,n\in\{1,\dots,N\}$ with $m\neq n$, the $(m,n)$-entry of $A(s)$ is zero if and only if the $(m,n)$-entry of $\Pi$ is zero.
\end{lemma}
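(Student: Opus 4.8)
The plan is to read off the off-diagonal entries of $A(z)$ directly from the defining formula $A(z)=\Phi(z)+\Pi\odot\Psi(z)-\Lambda$ and then to treat complex and real arguments separately. First I would observe that, since $\Phi(z)$ and $\Lambda$ are both diagonal matrix-valued functions, the only term contributing to an off-diagonal entry of $A(z)$ is $\Pi\odot\Psi(z)$; hence for $m\neq n$ the $(m,n)$-entry of $A(z)$ equals $\pi_{m,n}\psi_{m,n}(z)$ for every $z\in\Omega$.

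With this identity in hand, the first claim is immediate: if $\pi_{m,n}=0$ then $\pi_{m,n}\psi_{m,n}(z)=0$ for every $z\in\Omega$, irrespective of whether $\psi_{m,n}(z)$ is read off from its definition as a Laplace transform or from the convention $\psi_{m,n}(z)=1$ recorded in part (e) of Assumption \ref{ass:continuous}. This also disposes of the ``if'' direction of the second claim.

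For the ``only if'' direction of the second claim, suppose $s\in\Omega$ is real and $\pi_{m,n}\neq0$. Since $\Pi$ is an infinitesimal generator matrix its off-diagonal entries are nonnegative, so in fact $\pi_{m,n}>0$. Writing $X$ for the jump in $W$ triggered by a transition from state $m$ to state $n$, membership of $s$ in $\Omega$ guarantees $\E(\e^{sX})<\infty$, while $\e^{sX}>0$ almost surely forces $\E(\e^{sX})>0$; hence $\psi_{m,n}(s)=\E(\e^{sX})\in(0,\infty)$, and the $(m,n)$-entry $\pi_{m,n}\psi_{m,n}(s)$ is strictly positive, in particular nonzero. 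Taking the contrapositive yields the claim.

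The lemma is elementary and I do not anticipate a genuine obstacle; the only point requiring care is the asymmetry between the two claims. For complex $z$ the factor $\psi_{m,n}(z)$ may vanish even when $\pi_{m,n}>0$, which is precisely why the biconditional is asserted only for real arguments, and the proof of that case rests on the (otherwise trivial) fact that a Laplace transform is strictly positive at every real point of its strip of convergence.
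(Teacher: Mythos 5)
Your proof is correct and follows essentially the same route as the paper: read off the off-diagonal entries of $A(z)$ as $\pi_{m,n}\psi_{m,n}(z)$ using the diagonality of $\Phi(z)$ and $\Lambda$, and then use the fact that a Laplace transform is strictly positive (in particular nonzero) at real points of its domain to get the biconditional for real $s$. Your extra remarks on the convention $\psi_{m,n}\equiv1$ when $\pi_{m,n}=0$ and on why the equivalence can fail for complex $z$ are accurate and consistent with the paper's argument.
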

\begin{proof}
	The first assertion follows from the definition $A(z)=\Phi(z)+\Pi\odot\Psi(z)-\Lambda$ since $\Phi(z)$ is diagonal for each $z\in\Omega$ and $\Lambda$ is diagonal. Observing also that $\Psi(s)$ has nonzero off-diagonal entries for each real $s\in\Omega$ yields the second assertion. Note that the Laplace transform for a real random variable is nonzero on the real part of its domain.
\end{proof}

The second assertion of Lemma \ref{lem:samechains-continuous} implies that the classes and chains of classes of $A(s)$ are the same for all real $s\in\Omega$. Note, however, that the classification of classes as basic or nonbasic, and therefore the length of chains, may vary with $s$.

\begin{proof}[Proof of Theorem \ref{thm:continuous}]
	Suppose there exists a positive real number $\alpha$ in $\Omega^\circ$ such that $\zeta(A(\alpha))=0$. We must show that in this case \eqref{eq:thmcts1} is satisfied. Since $\Omega$ is a strip in the complex plane containing $0$ and $\alpha$, it must be the case that $\Omega^\circ$ is a strip in the complex plane satisfying $\Omega_{0,\alpha}\subseteq\Omega^\circ$, where we define $\Omega_{0,\alpha}\coloneqq\{z\in\mathbb C:0<\Re(z)<\alpha\}$. The function $A(z)$ is holomorphic on $\Omega^\circ$ because Laplace transforms, and thus L\'{e}vy exponents, are holomorphic on the interior of their domains. If $A(z)$ is also somewhere nonsingular on $\Omega^\circ$ then, by the first part of Proposition \ref{thm:irreducible}, $A(z)^{-1}$ is meromorphic on $\Omega^\circ$. In fact, we have $\zeta(A(s))<0$ for all $s\in(0,\alpha)$ because $\zeta(A(0))<0$ (by Lemma \ref{lem:Tfinite-continuous}) and because $\zeta(A(s))$ is a convex function of real $s\in\Omega$ (by Lemma \ref{lem:Nussbaum} and the remark immediately following). Thus $A(z)$ is nonsingular for all $z\in\Omega_{0,\alpha}$ by Lemma \ref{lem:MGF-continuous}, and $A(z)^{-1}$ is meromorphic on $\Omega^\circ$. Consequently, we may define a function $f(z)$ meromorphic on $\Omega^\circ$ by setting
	\begin{equation*}
		f(z)=\varpi^\top A(z)^{-1}\lambda.
	\end{equation*}
	
	We apply part \ref{en:2} of Theorem \ref{thm:main} to $f(z)$, with $v=\varpi$ and $w=\lambda$. If the assumptions of Theorem \ref{thm:main} are satisfied then it asserts that if there exists a positive length chain of classes $(\gamma_1,\dots,\gamma_\ell)$ of $A(\alpha)$ such that $\max_{m\in\gamma_1}\varpi_m>0$ and $\max_{n\in\gamma_\ell}\lambda_n>0$ then $\alpha$ is a pole of $f(z)$ with order equal to the length of the longest such chain. The initial and final classes of $A(\alpha)$ are the same as those of $\Pi$ by Lemma \ref{lem:samechains-continuous}, so condition \ref{en:initial-continuous} in Assumption \ref{ass:continuous} makes the restriction to chains for which $\max_{m\in\gamma_1}\varpi_m>0$ redundant, while condition \ref{en:final-continuous} in Assumption \ref{ass:continuous} makes the restriction to chains for which $\max_{n\in\gamma_\ell}\lambda_n>0$ redundant. Thus if the assumptions of Theorem \ref{thm:main} are satisfied then $\alpha$ is a pole of $f(z)$ with order equal to $d_\alpha$, the length of the longest chain of classes of $A(\alpha)$.
	
	The assumptions we need to verify are conditions \ref{en:Metzler} and \ref{en:zero} of Proposition \ref{thm:irreducible} and conditions \ref{en:classes} and \ref{en:basic} of Theorem \ref{thm:main}. The first two conditions are plainly satisfied. Condition \ref{en:classes} of Theorem \ref{thm:main} is satisfied because if $\gamma$ and $\delta$ are classes of $A(\alpha)$ with $\gamma\npreceq\delta$ then $A_{\gamma,\delta}(\alpha)=0$, and thus $A_{\gamma,\delta}(z)=0$ for every $z\in\Omega$ by the first assertion of Lemma \ref{lem:samechains-continuous}. It remains to verify condition \ref{en:basic} of Theorem \ref{thm:main}. Let $\gamma$ be a basic class of $A(\alpha)$. Since $\gamma$ is basic we have $\zeta(A_{\gamma,\gamma}(\alpha))=\zeta(A(\alpha))=0$. The classes of $A(\alpha)$ are the same as those of $A(0)$ by the second assertion of Lemma \ref{lem:samechains-continuous}. Thus $\gamma$ is a class of $A(0)$, implying that $\zeta(A_{\gamma,\gamma}(0))\leq\zeta(A(0))$. By Lemma \ref{lem:Tfinite-continuous} we therefore have $\zeta(A_{\gamma,\gamma}(0))<0$. Moreover, $\zeta(A_{\gamma,\gamma}(s))$ is convex as a function of real $s\in\Omega$ by Lemma \ref{lem:Nussbaum}, taking $F(s)=\Pi_{\gamma,\gamma}\odot\Psi_{\gamma,\gamma}(s)$ and $G(s)=\Phi_{\gamma,\gamma}(s)-\Lambda_{\gamma,\gamma}$. Since $\zeta(A_{\gamma,\gamma}(s))$ is convex in $s$, negative at $s=0$ and zero at $s=\alpha$, it must have positive left- and right-derivatives at $s=\alpha$. Thus condition \ref{en:basic} of Theorem \ref{thm:main} is satisfied. We conclude that $\alpha$ is a pole of $f(z)$ with order $d_\alpha$.
	
	Let $\varphi$ denote the Laplace transform for $W_T$, and let $\Omega_\varphi$ denote its domain, i.e.\
	\begin{equation*}
		\Omega_\varphi\coloneqq\big\{z\in\mathbb C:\E\big(\e^{\Re(z)W_T}\big)<\infty\big\}.
	\end{equation*}
	Let $\varphi^\circ$ denote the restriction of $\varphi$ to $\Omega_\varphi^\circ$, a holomorphic function. Recalling that $\zeta(A(s))<0$ for all $s\in(0,\alpha)$, we deduce from Lemma \ref{lem:MGF-continuous} that $\Omega_{0,\alpha}\subseteq\Omega_\varphi^\circ$ and that $\varphi^\circ$ coincides with $-f$ on $\Omega_{0,\alpha}$. Since $f$ is meromorphic on an open neighborhood of $\alpha$ with a pole of order $d_\alpha$ at $\alpha$, it follows that $\alpha$ is the right abscissa of convergence of $\varphi$, and that $\varphi^\circ$ can be meromorphically extended to an open set containing $\alpha$ in such a way that $\alpha$ is a pole of order $d_\alpha$ of this extension. Thus, by applying Theorem \ref{thm:tauberian}, we obtain the characterization of upper tail probabilities in \eqref{eq:thmcts1}.
	
	A symmetric argument can be used to establish the characterization of lower tail probabilities in \eqref{eq:thmcts2} when there exists a negative $-\beta\in\Omega^\circ$ such that $\zeta(A(-\beta))=0$.
\end{proof}

We close this subsection with a simple example illustrating the application of Theorem \ref{thm:continuous}. Let $(W,J,V)_{t\geq 0}$ be a stochastic process satisfying Assumption \ref{ass:continuous} with infinitesimal generator matrix $\Pi$ and initial state probability vector $\varpi$ satisfying
\begin{equation*}
	\Pi=\begin{bmatrix}-\theta_1&\theta_1&0&\cdots&0&0\cr0&-\theta_2&\theta_2&\cdots&0&0\cr\vdots&\vdots&\vdots&&\vdots&\vdots\cr0&0&0&\cdots&-\theta_{N-1}&\theta_{N-1}\cr0&0&0&\cdots&0&0\end{bmatrix}\quad\text{and}\quad\varpi=\begin{bmatrix}1\cr0\cr\vdots\cr0\cr0\end{bmatrix},
\end{equation*}
where $\theta_1,\dots,\theta_{N-1}$ are strictly positive. The directed graph for $\Pi$ is simply $1\to2\to\cdots\to N$. The Markov modulator $J$ begins in state $1$ with probability $\varpi_1=1$ and transitions from each state to the subsequent state at rate $\pi_{n,n+1}=\theta_n$. The stopping rates $\lambda_1,\dots,\lambda_{N-1}$ are arbitrary nonnegative numbers and the final stopping rate $\lambda_N$ is strictly positive. Assume that state transitions do not trigger jumps, so that every entry of $\Psi(z)$ is identically equal to one. Let $c_n=\theta_n+\lambda_n$ for $n=1,\dots,N-1$, and let $c_N=\lambda_N$. Then
\begin{equation*}
	A(z)=\begin{bmatrix}\varphi_1(z)-c_1&\theta_1&\cdots&0&0\cr0&\varphi_2(z)-c_2&\cdots&0&0\cr\vdots&\vdots&\ddots&\vdots&\vdots\cr0&0&\cdots&\varphi_{N-1}(z)-c_{N-1}&\theta_{N-1}\cr0&0&\cdots&0&\varphi_N(z)-c_N\end{bmatrix}.
\end{equation*}
Assume that each state-dependent L\'{e}vy exponent $\varphi_n$ takes the Gaussian form $\varphi_n(z)=\mu_nz+\sigma_n^2z^2/2$ with state-dependent mean $\mu_n$ and state-dependent variance $\sigma^2_n>0$. The unique positive value of $\alpha_n$ such that $\varphi_n(\alpha_n)=c_n$ is
\begin{equation*}
	\alpha_n=\frac{-\mu_n+\sqrt{\mu_n^2+2\sigma_n^2c_n}}{\sigma_n^2},
\end{equation*}
and for real $s\in(0,\alpha_n)$ we have $\varphi_n(s)<c_n$. Let $\alpha=\min_{n}\alpha_n$ and let $d_\alpha$ be the number of states for which $\alpha_n=\alpha$. Then $\zeta(A(\alpha))=0$, and there are exactly $d_\alpha$ zeros along the diagonal of $A(\alpha)$, with the other diagonal entries of $A(\alpha)$ being negative. The classes and chains of classes for $A(\alpha)$ are the same as those for $\Pi$, and exactly $d_\alpha$ of the $N$ classes of $A(\alpha)$ are basic. Thus Theorem \ref{thm:continuous} shows that the upper tail probabilities of the stopped Markov additive process satisfy \eqref{eq:thmcts1}, i.e.\ the upper tail probabilities are Erlang-like with rate parameter $\alpha$ and shape parameter $d_\alpha$. A similar argument using Theorem \ref{thm:continuous} shows that the lower tail probabilities of the stopped Markov additive process satisfy \eqref{eq:thmcts2} with $-\beta$ equal to the maximum value of
\begin{equation*}
	-\beta_n=\frac{-\mu_n-\sqrt{\mu_n^2+2\sigma_n^2c_n}}{\sigma_n^2}
\end{equation*}
across states and with $d_{-\beta}$ equal to the number of states for which $\beta_n=\beta$.

\subsection{Discrete-time Markov additive process with reset}\label{sec:discrete}

Our second model of random growth is the one studied in \cite{BeareToda2022}. It is in many ways similar to the model treated in the previous subsection, but with two key differences. First, time is discrete, taking values in $\mathbb N_0\coloneqq\{0\}\cup\mathbb N$. Second, rather than study the tail probabilities of a random variable obtained by stopping our growth process at a random time, we endow our growth process with a resetting mechanism inducing a unique stationary distribution, and study the tail probabilities of this stationary distribution. The resetting mechanism is intended to model the random destruction and creation of agents in an economy. This modeling device originates in work by Yaari and Blanchard \cite{Yaari1965,Blanchard1985} and is commonly employed in models of stationary economies with overlapping generations of agents subject to random mortality. See \cite{BeareToda2025} for a recent application related to the optimal choice of taxation policy.

Assumption \ref{ass:discrete} defines our second random growth model and introduces associated notation. As in the statement of Assumption \ref{ass:continuous}, we use Roman numerals to itemize required conditions and English letters to itemize notation.
\begin{assumption}\label{ass:discrete}
	We require that $(W,V,J)\coloneqq(W_t,V_t,J_t)_{t\in\mathbb N_0}$ is a stationary Markov chain satisfying the following conditions.
	\begin{enumerate}[label=\upshape(\roman*)]
		\item The state space for $(W,V,J)$ is $\mathbb R\times\{0,1\}\times\{1,\dots,N\}$ for some $N\in\mathbb N$.\label{en:statespace-discrete}
		\item $(V,J)$ is a Markov chain such that for each $t\in\mathbb N$, each $u,v\in\{0,1\}$ and each $m,n\in\mathcal N$ we have
		\begin{align*}
			\Pr(V_{t+1}=0\,\vert\,V_t=u,J_t=m)&=\sum_{n=1}^N\pi_{m,n}\upsilon_{m,n}\quad\text{and}\\
			\Pr(J_{t+1}=n\,\vert\,V_{t+1}=v,V_t=u,J_t=m)&=\mathbbm 1(v=0)\pi_{m,n}+\mathbbm 1(v=1)\varpi_n,
		\end{align*}
		where:
		\begin{enumerate}[label=\upshape(\alph*)]
			\item $\Pi\coloneqq(\pi_{m,n})$ is a nonnegative $N\times N$ matrix with rows summing to one.
			\item $\Upsilon\coloneqq(\upsilon_{m,n})$ is an $N\times N$ matrix with entries in $[0,1]$.
			\item $\varpi\coloneqq(\varpi_n)$ is a nonnegative $N\times1$ vector with entries summing to one.
			\setcounter{counter:notation}{\value{enumii}}
		\end{enumerate}\label{en:VJ-discrete}
		\item There exists an iid sequence of real $N\times N$ random matrices $\varepsilon\coloneqq(\varepsilon_t)_{t\in\mathbb N}$, independent of $(V,J)$, such that for each $t\in\mathbb N$ we have
		\begin{equation*}
			W_{t+1}=\mathbbm 1(V_{t+1}=0)[W_t+\varepsilon_{t+1}(J_t,J_{t+1})],
		\end{equation*}
		where $\varepsilon_t(m,n)$ refers to the $(m,n)$-entry of $\varepsilon_{t}$.\label{en:W-discrete}
		\item Each initial class of $\Pi\odot\Upsilon$ contains a state $n$ such that $\varpi_n>0$.\label{en:initial-discrete}
		\item Each final class of $\Pi$ contains states $m$ and $n$ \textup{(}possibly with $m=n$\textup{)} such that $\pi_{m,n}>0$ and $\upsilon_{m,n}<1$.\label{en:final-discrete}
		\setcounter{counter:assumption}{\value{enumi}}
	\end{enumerate}
	We further introduce the following notation.
	\begin{enumerate}[label=\upshape(\alph*)]
		\setcounter{enumi}{\value{counter:notation}}
		\item $\varphi_{m,n}$ is the Laplace transform for $\varepsilon_1(m,n)$. \textup{(}If $\Pr(J_0=m,J_1=n)=0$ then we may assume without loss of generality that $\varepsilon_1(m,n)=0$, so that $\varphi_{m,n}(z)=1$ for all $z\in\mathbb C$.\textup{)}
		\item $\Omega$ is the intersection of the domains of all $\varphi_{m,n}$.
		\item $\Phi:\Omega\to\mathbb C^{N\times N}$ is the $N\times N$ matrix-valued function whose $(m,n)$-entry is the restriction of $\varphi_{m,n}$ to $\Omega$.
		\item $A:\Omega\to\mathbb C^{N\times N}$ is the $N\times N$ matrix-valued function defined by
		\begin{equation*}
			A(z)=\Pi\odot\Upsilon\odot\Phi(z)-I,
		\end{equation*}
		where $I$ is the $N\times N$ identity matrix.
	\end{enumerate}
\end{assumption}

Given $\Pi$, $\Upsilon$ and $\varpi$, parts \ref{en:statespace-discrete} and \ref{en:VJ-discrete} of Assumption \ref{ass:discrete} determine the Markov transition probabilities for $(V_t,J_t)_{t\in\mathbb N_0}$. Note that if all entries of $\Upsilon$ were equal to one---which is not permitted under condition \ref{en:final-discrete}---then $(J_t)_{t\in\mathbb N_0}$ would be a stationary Markov chain with transition probability matrix $\Pi$, and we would have $V_t=0$ for all $t$ with probability one. To the contrary, condition \ref{en:final-discrete} guarantees that each final class of $\Pi$ contains a state $m$ such that $\Pr(V_{t+1}=1\mid J_t=m)>0$, thereby ensuring that $V_t=1$ infinitely often with probability one. Part \ref{en:W-discrete} of Assumption \ref{ass:discrete} requires $(W_t)_{t\in\mathbb N_0}$ to increase by $\varepsilon_{t+1}(J_{t},J_{t+1})$ from period $t$ to period $t+1$ if $V_{t+1}=0$, or to reset to zero if $V_{t+1}=1$. Condition \ref{en:initial-discrete} excludes the presence of states which, with probability one, are never visited by $J_t$.

If $(W_t,V_t,J_t)_{t\in\mathbb N_0}$ satisfies Assumption \ref{ass:discrete} then $(\e^{W_t},V_t,J_t)_{t\in\mathbb N_0}$ is a \emph{Markov multiplicative process with reset} in the sense defined in \cite{BeareToda2022}. The characterization of the upper and lower tails of the time-invariant distribution of $\e^{W_t}$ established in \cite{BeareToda2022}---see, in particular, Theorem 3 therein---may be adapted in an obvious way to apply to the time-invariant distribution of $W_t$. However, it is required in \cite{BeareToda2022} that the matrix $\Pi\odot\Upsilon$ be irreducible. Our main result in this section, which may be compared to Theorem \ref{thm:continuous} in Section \ref{sec:continuous}, does not impose this requirement, and consequently provides a more general characterization of tail probabilities.

\begin{theorem}\label{thm:discrete}
	Let $(W_t,V_t,J_t)_{t\in\mathbb N_0}$ be a stationary Markov chain satisfying Assumption \ref{ass:discrete}. If there exists a positive real number $\alpha$ in $\Omega^\circ$ such that $\zeta(A(\alpha))=0$ then
	\begin{equation*}
		0<\liminf_{w\to\infty}w^{-d_\alpha+1}\e^{\alpha w}\Pr(W_0>w)\leq\limsup_{w\to\infty}w^{-d_\alpha+1}\e^{\alpha w}\Pr(W_0>w)<\infty,
	\end{equation*}
	where $d_\alpha$ is the length of the longest chain of classes of $A(\alpha)$. If there exists a negative real number $-\beta$ in $\Omega^\circ$ such that $\zeta(A(-\beta))=0$ then
	\begin{equation*}
		0<\liminf_{w\to\infty}w^{-d_{-\beta}+1}\e^{\beta w}\Pr(W_0<-w)\leq\limsup_{w\to\infty}w^{-d_{-\beta}+1}\e^{\beta w}\Pr(W_0<-w)<\infty,
	\end{equation*}
	where $d_{-\beta}$ is the length of the longest chain of classes of $A(-\beta)$.
\end{theorem}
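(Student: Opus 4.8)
The proof follows the same template as that of Theorem \ref{thm:continuous}, so I describe mainly its structure and the points requiring separate treatment in the discrete setting. I treat only the upper tail; the lower tail follows from the symmetric argument at $-\beta$. Suppose $\alpha>0$ lies in $\Omega^\circ$ with $\zeta(A(\alpha))=0$. As in the continuous case, $\Omega$ is a strip containing $0$ and $\alpha$, so the open strip $\Omega_{0,\alpha}\coloneqq\{z\in\mathbb C:0<\Re(z)<\alpha\}$ is contained in $\Omega^\circ$, and $A(z)=\Pi\odot\Upsilon\odot\Phi(z)-I$ is holomorphic on $\Omega^\circ$ because the entries of $\Phi$ are Laplace transforms.

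The first ingredient is the discrete analog of Lemma \ref{lem:Tfinite-continuous}, namely $\zeta(A(0))<0$. Since $\Phi(0)$ is the all-ones matrix, $A(0)=\Pi\odot\Upsilon-I$, so $\zeta(A(0))=\rho(\Pi\odot\Upsilon)-1$ and it suffices to show $\rho(\Pi\odot\Upsilon)<1$. Every edge of $\Pi\odot\Upsilon$ is an edge of $\Pi$, so each class of $\Pi\odot\Upsilon$ is contained in a class of $\Pi$, and $\rho(\Pi\odot\Upsilon)$ equals the largest of the spectral radii of the diagonal blocks $(\Pi\odot\Upsilon)_{\gamma,\gamma}$, each of which is a principal submatrix of $(\Pi\odot\Upsilon)_{\delta,\delta}$ for some class $\delta$ of $\Pi$. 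If $\delta$ is not a final class of $\Pi$, then $\Pi_{\delta,\delta}$ is strictly substochastic, so $\rho((\Pi\odot\Upsilon)_{\delta,\delta})\leq\rho(\Pi_{\delta,\delta})<1$. If $\delta$ is a final class of $\Pi$, then $\Pi_{\delta,\delta}$ is irreducible and stochastic, so $\rho(\Pi_{\delta,\delta})=1$, while condition \ref{en:final-discrete} supplies an entry at which $\Pi_{\delta,\delta}\odot\Upsilon_{\delta,\delta}$ is strictly smaller than $\Pi_{\delta,\delta}$, so strict monotonicity of the spectral radius of an irreducible nonnegative matrix gives $\rho((\Pi\odot\Upsilon)_{\delta,\delta})<\rho(\Pi_{\delta,\delta})=1$. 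Either way $\rho(\Pi\odot\Upsilon)<1$, hence $\zeta(A(0))<0$. Combined with Lemma \ref{lem:Nussbaum}---applied with $F(s)=\Pi\odot\Upsilon\odot\Phi(s)$, each of whose entries is a nonnegative constant times a Laplace transform, hence log-convex or identically zero, and $G(s)=-I$, which is constant and diagonal, hence convex---this shows that $\zeta(A(s))$ is convex in real $s\in\Omega$ and strictly negative on $[0,\alpha)$.

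The second ingredient is the discrete analog of Lemma \ref{lem:MGF-continuous}. Adapting the renewal argument behind Theorem 3 in \cite{BeareToda2022}, which does not require irreducibility of $\Pi\odot\Upsilon$, one expands the stationary distribution of $W_0$ over the number of steps elapsed since the last reset; the resulting geometric series in $\Pi\odot\Upsilon\odot\Phi(z)$ converges wherever $\zeta(A(\Re(z)))<0$, in particular on $\Omega_{0,\alpha}$, and shows that $A(z)$ is nonsingular there with $\E(\e^{zW_0})=-v^\top A(z)^{-1}w$, where $v$ is a positive multiple of $\varpi$ and $w$ is a positive multiple of $1_N$. In particular $v$ is positive on every initial class of $\Pi\odot\Upsilon$ by condition \ref{en:initial-discrete}, while $w\gg0$. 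Setting $f(z)\coloneqq v^\top A(z)^{-1}w$, which is meromorphic on $\Omega^\circ$ by the first part of Proposition \ref{thm:irreducible}, I would apply part \ref{en:2} of Theorem \ref{thm:main} to $f$ at $\alpha$. Conditions \ref{en:Metzler} and \ref{en:zero} are immediate; condition \ref{en:classes} holds because the off-diagonal zero pattern of $A(z)$ coincides with that of $\Pi\odot\Upsilon$ for every $z\in\Omega$---a Laplace transform is never identically zero and is positive on the real axis---so $A_{\gamma,\delta}(z)=0$ for all $z$ whenever $\gamma\npreceq\delta$; and condition \ref{en:basic} holds because, for each basic class $\gamma$ of $A(\alpha)$, the function $\zeta(A_{\gamma,\gamma}(s))$ is convex in real $s$ (Lemma \ref{lem:Nussbaum} again, restricted to $\gamma$), vanishes at $\alpha$, and is at most $\zeta(A(0))<0$ at $0$, forcing positive left- and right-derivatives at $\alpha$ simultaneously for every basic $\gamma$. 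Since the directed graph of $A(\alpha)$ coincides with that of $\Pi\odot\Upsilon$, a longest chain of classes of $A(\alpha)$ may be prolonged on the left to begin at an initial class without changing its length, and that initial class contains a state at which $\varpi$---and hence $v$---is positive; as also $w\gg0$, the chain restrictions in part \ref{en:2} are vacuous, so $\alpha$ is a pole of $f$ of order exactly $d_\alpha$. Since $\E(\e^{zW_0})=-f(z)$ on $\Omega_{0,\alpha}$, it follows that $\alpha$ is the right abscissa of convergence of the Laplace transform of $W_0$ and that this transform extends meromorphically across $\alpha$ with a pole of order $d_\alpha$ there, so Theorem \ref{thm:tauberian} delivers the stated upper-tail estimate.

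The step I expect to be the main obstacle is the second one: verifying that the renewal expansion of the stationary law of $W_0$ collapses cleanly to $-v^\top A(z)^{-1}w$ in the reducible case, and pinning down $v$ and $w$ together with the positivity properties above. The inequality $\zeta(A(0))<0$ is what guarantees a finite expected reset-cycle length, without which the stationary distribution would not be concentrated on finite accumulations of shocks and the representation would break down; condition \ref{en:final-discrete} is used precisely to secure it. Once the Laplace-transform representation and $\zeta(A(0))<0$ are in hand, the combinatorial spectral bookkeeping---matching classes and chains of $A(s)$ across real $s$, distinguishing basic from nonbasic classes, and reading off the pole order---is entirely absorbed into Theorem \ref{thm:main}, exactly as in Section \ref{sec:continuous}.
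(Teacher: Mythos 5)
Your proposal is correct and follows essentially the same route as the paper: establish $\zeta(A(0))<0$ from condition \ref{en:final-discrete}, get convexity of $\zeta(A(s))$ from Lemma \ref{lem:Nussbaum}, invoke the stationary-law Laplace transform representation $\E(\e^{zW_0})=-\Pr(V_0=1)\varpi^\top A(z)^{-1}1_N$ (which the paper likewise takes from Lemma 2 of \cite{BeareToda2022} rather than reproving), verify the hypotheses of Theorem \ref{thm:main} via the zero-pattern comparison with $\Pi\odot\Upsilon$, and finish with Theorem \ref{thm:tauberian}. Your hands-on substochasticity argument for $\rho(\Pi\odot\Upsilon)<1$ and the explicit left-prolongation of a longest chain to an initial class are just slightly more elementary renditions of the paper's citations (Rothblum's Corollary 3.5 plus spectral-abscissa monotonicity, and the ``redundancy'' of the initial-class restriction), not a different approach.
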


We will prove Theorem \ref{thm:discrete} using arguments similar to those we used to prove Theorem \ref{thm:continuous}. The following lemma plays a similar role in the proof of Theorem \ref{thm:discrete} to that of Lemma \ref{lem:Tfinite-continuous} in the proof of Theorem \ref{thm:continuous}.

\begin{lemma}\label{lem:Tfinite-discrete}
	Let $(W_t,V_t,J_t)_{t\in\mathbb N_0}$ be a stationary Markov chain satisfying Assumption \ref{ass:discrete}. Then $\zeta(A(0))<0$.
\end{lemma}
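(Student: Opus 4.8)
The plan is to reduce the claim to a statement about the spectral radius of the nonnegative matrix $\Pi\odot\Upsilon$. Since each $\varphi_{m,n}$ is a Laplace transform we have $\varphi_{m,n}(0)=1$, so $\Phi(0)$ is the all-ones matrix and hence $A(0)=\Pi\odot\Upsilon-I$. The eigenvalues of $A(0)$ are those of $\Pi\odot\Upsilon$ shifted by $-1$, so $\zeta(A(0))=\zeta(\Pi\odot\Upsilon)-1$, and because $\Pi\odot\Upsilon\geq0$ Lemma \ref{lem:PF} gives $\zeta(\Pi\odot\Upsilon)=\rho(\Pi\odot\Upsilon)$. Thus $\zeta(A(0))=\rho(\Pi\odot\Upsilon)-1$, and it suffices to establish the (stronger) bound $\rho(\Pi\odot\Upsilon)<1$, which immediately yields $\zeta(A(0))<0<1$.

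To bound $\rho(\Pi\odot\Upsilon)$ I would first argue at the level of the classes of $\Pi$ itself. Since $\Pi$ is row-stochastic, $\rho(\Pi_{\gamma,\gamma})=1$ for every final class $\gamma$ of $\Pi$ (the block is irreducible with all row sums equal to one), while $\rho(\Pi_{\gamma,\gamma})<1$ for every non-final class $\gamma$ of $\Pi$ (the block is irreducible and substochastic with at least one strictly deficient row, since $\gamma$ has access outside itself; equivalently, the non-final classes of $\Pi$ are precisely its nonbasic classes, cf.\ Corollary 3.5 in \cite{Rothblum1975}). Because $0\leq\Pi\odot\Upsilon\leq\Pi$ entrywise, the same holds blockwise, and the monotonicity of the spectral abscissa of a Metzler matrix (Corollary 1 in \cite{Deutsch1975}; see also Theorem A.5 in \cite{BeareSeoToda2022}) gives $\rho((\Pi\odot\Upsilon)_{\gamma,\gamma})\leq\rho(\Pi_{\gamma,\gamma})$ for every class $\gamma$ of $\Pi$; this already handles the non-final classes. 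For a final class $\gamma$ of $\Pi$, condition \ref{en:final-discrete} in Assumption \ref{ass:discrete} supplies states $m,n\in\gamma$ with $\pi_{m,n}>0$ and $\upsilon_{m,n}<1$, so $(\Pi\odot\Upsilon)_{\gamma,\gamma}$ is a nonnegative matrix dominated by, but not equal to, the irreducible matrix $\Pi_{\gamma,\gamma}$; the strict form of the monotonicity result then forces $\rho((\Pi\odot\Upsilon)_{\gamma,\gamma})<\rho(\Pi_{\gamma,\gamma})=1$. Hence $\rho((\Pi\odot\Upsilon)_{\gamma,\gamma})<1$ for every class $\gamma$ of $\Pi$.

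Finally I would pass from the classes of $\Pi$ to those of $\Pi\odot\Upsilon$. Since the directed graph of $\Pi\odot\Upsilon$ is a subgraph of that of $\Pi$, every class $\delta$ of $\Pi\odot\Upsilon$ is contained in a single class $\gamma$ of $\Pi$, and $(\Pi\odot\Upsilon)_{\delta,\delta}$ is then a principal submatrix of $(\Pi\odot\Upsilon)_{\gamma,\gamma}$; monotonicity of the spectral radius under passage to a principal submatrix of a nonnegative matrix gives $\rho((\Pi\odot\Upsilon)_{\delta,\delta})\leq\rho((\Pi\odot\Upsilon)_{\gamma,\gamma})<1$. Writing $\Pi\odot\Upsilon$ in block upper triangular form with irreducible diagonal blocks indexed by its own classes, its spectral radius equals the maximum of the spectral radii of those blocks, so $\rho(\Pi\odot\Upsilon)<1$ and the claim follows. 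I expect the one genuinely delicate point to be the bookkeeping around these two distinct class structures: condition \ref{en:final-discrete} only controls $\Upsilon$ along an edge lying inside a final class of $\Pi$, and that edge need not lie inside a single class of $\Pi\odot\Upsilon$, which is exactly why the strict drop in spectral radius must be extracted at the level of $\Pi$'s final classes before restricting to the (possibly finer) classes of $\Pi\odot\Upsilon$.
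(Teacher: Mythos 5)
Your proposal is correct and follows essentially the same route as the paper's proof: reduce to $\zeta(\Pi\odot\Upsilon)<1$ via $A(0)=\Pi\odot\Upsilon-I$, use row-stochasticity and Rothblum's Corollary 3.5 to identify the final classes of $\Pi$ as the basic ones, extract the strict drop in spectral radius on final classes of $\Pi$ from condition \ref{en:final-discrete} together with strict monotonicity of the spectral abscissa for irreducible dominated matrices, and then pass to the (possibly finer) classes of $\Pi\odot\Upsilon$ by principal-submatrix monotonicity. Your observation that the argument actually yields the stronger bound $\zeta(A(0))<0$ matches what the paper uses later in the proof of Theorem \ref{thm:discrete}.
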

\begin{proof}
	The proof is similar to the argument given in the last paragraph of the proof of Lemma \ref{lem:Tfinite-continuous}. Since $\Pi$ is nonnegative with unit row sums we know that $\zeta(\Pi)=1$ and (from Corollary 3.5 in \cite{Rothblum1975}) that the basic classes of $\Pi$ are precisely its final ones. Thus $\zeta(\Pi_{\gamma,\gamma})=1$ for each final class $\gamma$ of $\Pi$, and $\zeta(\Pi_{\gamma,\gamma})<1$ for each nonfinal class $\gamma$ of $\Pi$. Since the entries of $\Upsilon$ belong to $[0,1]$ we have $\Pi_{\gamma,\gamma}\odot\Upsilon_{\gamma,\gamma}\leq\Pi_{\gamma,\gamma}$ for each class $\gamma$ of $\Pi$; moreover, condition \ref{en:final-discrete} in Assumption \ref{ass:discrete} implies that $\Pi_{\gamma,\gamma}\odot\Upsilon_{\gamma,\gamma}<\Pi_{\gamma,\gamma}$ for each final class $\gamma$ of $\Pi$. From a monotonicity property of the spectral abscissa (see Theorem A.5 in \cite{BeareSeoToda2022}) we therefore obtain $\zeta(\Pi_{\gamma,\gamma}\odot\Upsilon_{\gamma,\gamma})<1$ for each class $\gamma$ of $\Pi$. Since every class of $\Pi\odot\Upsilon$ is a subset of a class of $\Pi$, it follows (see Corollary 1.6 on p.~28 in \cite{BermanPlemmons1994}) that $\zeta(\Pi_{\gamma,\gamma}\odot\Upsilon_{\gamma,\gamma})<1$ for each class $\gamma$ of $\Pi\odot\Upsilon$. Consequently $\zeta(\Pi\odot\Upsilon)<1$. We complete the proof by observing that $A(0)=\Pi\odot\Upsilon-I$.
\end{proof}

The next lemma, which may be compared to Lemma \ref{lem:MGF-continuous}, is an immediate consequence of Lemma 2 in \cite{BeareToda2022}. We do not repeat the proof here.
\begin{lemma}\label{lem:MGF-discrete}
	Let $(W_t,V_t,J_t)_{t\in\mathbb N_0}$ be a stationary Markov chain satisfying Assumption \ref{ass:discrete}. Let $z$ be a point in $\Omega$ such that $\zeta(A(\Re(z)))<0$. Then $A(z)$ is nonsingular, and
	\begin{align*}
		\E\left(\e^{zW_0}\right)&=-\Pr(V_0=1)\varpi^\top A(z)^{-1}1_N.
	\end{align*}
\end{lemma}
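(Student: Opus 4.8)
The plan is to reduce the claimed identity to the resolvent of a single matrix, exploiting the renewal structure of the reset mechanism together with stationarity. Set $M(z)\coloneqq\Pi\odot\Upsilon\odot\Phi(z)$, so that $A(z)=M(z)-I$, and let $g(z)\coloneqq(g_1(z),\dots,g_N(z))^\top$ with $g_n(z)\coloneqq\E(\e^{zW_0}\mathbbm{1}(J_0=n))$. The heart of the argument is the fixed-point relation $g(z)^\top=g(z)^\top M(z)+\Pr(V_0=1)\varpi^\top$. To obtain it I would extend the stationary chain to $t\in\mathbb Z$ and condition on $(V_0,J_{-1})$ in the recursion $W_0=\mathbbm{1}(V_0=0)[W_{-1}+\varepsilon_0(J_{-1},J_0)]$. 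On the event $\{V_0=1\}$ a reset forces $W_0=0$ and re-draws $J_0\sim\varpi$, contributing $\Pr(V_0=1)\varpi_n$ to $g_n(z)$. On $\{V_0=0\}$ the Markov property of $(V,J)$ renders $W_{-1}$ conditionally independent of $(V_0,J_0)$ given $J_{-1}$, the sequence $\varepsilon$ is independent of $(V,J)$, and the probability of surviving one step from state $m$ to state $n$ without a reset is $(\Pi\odot\Upsilon)_{m,n}$ by the transition probabilities in condition \ref{en:VJ-discrete} of Assumption \ref{ass:discrete}; the increment's Laplace transform $\varphi_{m,n}(z)$ then factors out, so this part contributes $\sum_m g_m(z)(\Pi\odot\Upsilon)_{m,n}\varphi_{m,n}(z)=(g(z)^\top M(z))_n$. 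Adding the two pieces gives the relation.

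Before this relation can be rearranged one must know that the entries of $g(z)$ are finite, i.e.\ that $\E(\e^{\Re(z)W_0})<\infty$. I would first handle real $s$ with $\zeta(A(s))<0$ by unwinding $W_0$ back to the most recent reset time $\tau\coloneqq\sup\{t\le0:V_t=1\}$, which gives $W_0=\sum_{i=\tau+1}^{0}\varepsilon_i(J_{i-1},J_i)$ and, after summing over the post-reset modulator path, $\E(\e^{sW_0})=\Pr(V_0=1)\sum_{k\ge0}\varpi^\top M(s)^k1_N$. Since $M(s)\ge0$, Lemma \ref{lem:PF} gives $\zeta(M(s))=\rho(M(s))$, and the hypothesis $\zeta(A(s))<0$ is precisely $\rho(M(s))<1$; hence the Neumann series converges and $\E(\e^{sW_0})=\Pr(V_0=1)\varpi^\top(I-M(s))^{-1}1_N<\infty$. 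That $\tau>-\infty$ almost surely, which makes this decomposition exhaustive, follows from condition \ref{en:final-discrete} of Assumption \ref{ass:discrete} together with the inequality $\rho(\Pi\odot\Upsilon)<1$ established in the proof of Lemma \ref{lem:Tfinite-discrete}, since the latter forces $\Pr(V_{-k}=\dots=V_0=0)\to0$ as $k\to\infty$.

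With finiteness secured at $s=\Re(z)$, the fixed-point relation is valid at $z$ and rearranges to $g(z)^\top(I-M(z))=\Pr(V_0=1)\varpi^\top$, so it remains only to invert $I-M(z)$---equivalently, to prove the asserted nonsingularity of $A(z)$. Here the elementary bound $|\varphi_{m,n}(z)|=|\E(\e^{z\varepsilon_1(m,n)})|\le\E(\e^{\Re(z)\varepsilon_1(m,n)})=\varphi_{m,n}(\Re(z))$ gives $|M(z)|\le M(\Re(z))$ entrywise, whence $\rho(M(z))\le\rho(M(\Re(z)))<1$ by the standard monotonicity of the spectral radius on nonnegative matrices. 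Thus $1$ is not an eigenvalue of $M(z)$, so $A(z)=M(z)-I$ is nonsingular and $(I-M(z))^{-1}=-A(z)^{-1}$. Substituting gives $g(z)^\top=-\Pr(V_0=1)\varpi^\top A(z)^{-1}$, and post-multiplication by $1_N$ yields $\E(\e^{zW_0})=g(z)^\top1_N=-\Pr(V_0=1)\varpi^\top A(z)^{-1}1_N$, as required.

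I expect the main obstacle to lie in the second paragraph: rigorously verifying that the backward recurrence time $\tau$ is almost surely finite and that the associated renewal series both converges and represents $\E(\e^{\Re(z)W_0})$. This is exactly where the subcriticality $\rho(\Pi\odot\Upsilon)<1$ furnished by Lemma \ref{lem:Tfinite-discrete} and the reset condition \ref{en:final-discrete} of Assumption \ref{ass:discrete} do the essential work; once finiteness is in hand, the fixed-point manipulation and the nonsingularity argument are routine. As an alternative to the fixed-point relation one may derive the complex-$z$ formula directly from the renewal series $\Pr(V_0=1)\sum_{k\ge0}\varpi^\top M(z)^k1_N$ by dominated convergence, dominating term-by-term by the integrable $\e^{\Re(z)W_0}$ via $|M(z)^k|\le M(\Re(z))^k$; this is essentially the route taken for Lemma 2 in \cite{BeareToda2022}.
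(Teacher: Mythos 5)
The paper offers no proof of this lemma---it is deferred to Lemma 2 of \cite{BeareToda2022}---and your argument is in substance a correct self-contained reconstruction of that proof: a fixed-point equation for the restricted transforms $g_n(z)=\E(\e^{zW_0}\mathbbm 1(J_0=n))$ obtained by conditioning on whether a reset occurs at the current step, finiteness of $\E(\e^{sW_0})$ at $s=\Re(z)$ via the renewal series over the time elapsed since the last reset (convergent because $\zeta(A(s))<0$ is exactly $\rho(M(s))<1$ for the nonnegative matrix $M(s)$), and nonsingularity of $A(z)$ from the entrywise bound $|M(z)|\leq M(\Re(z))$ together with monotonicity of the spectral radius. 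The algebra, the use of Lemma \ref{lem:PF} to equate spectral abscissa and radius, and the appeal to $\rho(\Pi\odot\Upsilon)<1$ from the proof of Lemma \ref{lem:Tfinite-discrete} are all sound.

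One step deserves explicit care. You assert that the probability of moving from $m$ to $n$ without a reset is $(\Pi\odot\Upsilon)_{m,n}$ ``by the transition probabilities in condition \ref{en:VJ-discrete} of Assumption \ref{ass:discrete}.'' Composing the two conditionals displayed there literally gives $\Pr(V_{t+1}=0,J_{t+1}=n\mid V_t=u,J_t=m)=\pi_{m,n}\sum_{k=1}^N\pi_{m,k}\upsilon_{m,k}$, which coincides with $\pi_{m,n}\upsilon_{m,n}$ only when the entries of row $m$ of $\Upsilon$ are constant on the support of row $m$ of $\Pi$. The joint specification $\pi_{m,n}\upsilon_{m,n}$ that you use is the one under which the stated formula, the definition of $A(z)$, and Lemma \ref{lem:samechains-discrete} are coherent, and it matches the model of \cite{BeareToda2022}; you should state that you are working with this joint law (equivalently, that the conditional law of $J_{t+1}$ given no reset is $\pi_{m,n}\upsilon_{m,n}/\sum_k\pi_{m,k}\upsilon_{m,k}$), rather than present it as a direct consequence of the printed conditionals. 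Relatedly, the fixed-point step uses that $(V_{t+1},J_{t+1})$ is conditionally independent of $W_t$ given $J_t$; this too is part of the intended reading of conditions \ref{en:VJ-discrete}--\ref{en:W-discrete} rather than a line you can quote. Finally, the excursion to a two-sided extension is avoidable and, as written, glosses over the need to justify that the extended chain retains the structural representation with an iid $\varepsilon$ independent of $(V,J)$: stationarity lets you run the same conditioning at the step from $t=1$ to $t=2$, where the recursion in condition \ref{en:W-discrete} is available, and the renewal series can be obtained by decomposing $W_k$ over the most recent reset time in $\{1,\dots,k\}$ and letting $k\to\infty$. None of this changes the route, which is the same as that of the cited lemma.
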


The final lemma we require for our proof of Theorem \ref{thm:discrete} resembles Lemma \ref{lem:samechains-continuous} above. We omit the proof as it is an obvious modification to the proof of Lemma \ref{lem:samechains-continuous}.
\begin{lemma}\label{lem:samechains-discrete}
	Let $\Pi$, $\Upsilon$ and $A(z)$ be defined as in Assumption \ref{ass:discrete}. Then, for every $z\in\Omega$ and every $m,n\in\{1,\dots,N\}$ with $m\neq n$, the $(m,n)$-entry of $A(z)$ is zero if the $(m,n)$-entry of $\Pi\odot\Upsilon$ is zero. Moreover, for every real $s\in\Omega$ and every $m,n\in\{1,\dots,N\}$ with $m\neq n$, the $(m,n)$-entry of $A(s)$ is zero if and only if the $(m,n)$-entry of $\Pi\odot\Upsilon$ is zero.
\end{lemma}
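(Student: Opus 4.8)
\textbf{Proof proposal for Lemma \ref{lem:samechains-discrete}.}

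The plan is to proceed exactly as in the proof of Lemma \ref{lem:samechains-continuous}, with $\Pi\odot\Upsilon$ playing the role that $\Pi$ played there. Recall that $A(z)=\Pi\odot\Upsilon\odot\Phi(z)-I$. Fix $m,n\in\{1,\dots,N\}$ with $m\neq n$; then the $(m,n)$-entry of $-I$ is zero, so the $(m,n)$-entry of $A(z)$ equals $(\Pi\odot\Upsilon)_{m,n}\varphi_{m,n}(z)$ for every $z\in\Omega$. If $(\Pi\odot\Upsilon)_{m,n}=0$ then this product is zero regardless of $z$, which gives the first assertion. For the second assertion, suppose additionally that $s\in\Omega$ is real. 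Then $\varphi_{m,n}(s)=\E(\e^{s\varepsilon_1(m,n)})$ is the value at a real point of the domain of the Laplace transform of the real random variable $\varepsilon_1(m,n)$, hence is strictly positive (the integrand $\e^{s\varepsilon_1(m,n)}$ is positive and the expectation is finite and nonzero). Therefore $(\Pi\odot\Upsilon)_{m,n}\varphi_{m,n}(s)=0$ if and only if $(\Pi\odot\Upsilon)_{m,n}=0$, which is the second assertion.

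The only technical points to keep in mind are the following. First, we must confirm that $\Phi(z)$ is well-defined and that the formula $A(z)=\Pi\odot\Upsilon\odot\Phi(z)-I$ really does make the off-diagonal $(m,n)$-entry of $A(z)$ equal to $(\Pi\odot\Upsilon)_{m,n}\Phi_{m,n}(z)$; this is immediate because $I$ is diagonal and the Hadamard product acts entrywise, so the entrywise identity $A_{m,n}(z)=(\Pi\odot\Upsilon)_{m,n}\Phi_{m,n}(z)$ holds for $m\neq n$ directly from the definitions in Assumption \ref{ass:discrete}. Second, we must invoke the convention, stated in Assumption \ref{ass:discrete}(g), that when $\Pr(J_0=m,J_1=n)=0$ we take $\varepsilon_1(m,n)=0$ and hence $\varphi_{m,n}\equiv1$; but this convention is harmless here, since in that case $\varphi_{m,n}(s)=1>0$ still holds, so the argument for the second assertion is unaffected, and the first assertion is vacuous or trivial as appropriate.

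I do not expect any real obstacle: this lemma is purely bookkeeping about the entrywise structure of $A(z)$ and the positivity of Laplace transforms at real points of their domain. The one place where a careless reader might slip is in asserting positivity of $\varphi_{m,n}(s)$ at a \emph{real} $s$ — this uses that $\varepsilon_1(m,n)$ is real-valued, so that $\e^{s\varepsilon_1(m,n)}>0$ almost surely, and that $s$ lies in the (real part of the) domain so the expectation is finite; it is this combination that fails at a general complex $z$, which is precisely why the second ("if and only if") assertion is restricted to real $s$ while the first ("if") assertion holds for all $z\in\Omega$.
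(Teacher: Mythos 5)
Your proof is correct and follows essentially the same route as the paper, which simply notes that $I$ is diagonal so the off-diagonal entries of $A(z)$ are $(\Pi\odot\Upsilon)_{m,n}\varphi_{m,n}(z)$, and that a Laplace transform of a real random variable is nonzero (indeed positive) at real points of its domain. Your extra remarks on the $\varphi_{m,n}\equiv1$ convention and on why the equivalence is restricted to real $s$ are accurate but not needed beyond what the paper's one-line argument already covers.
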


\begin{proof}[Proof of Theorem \ref{thm:discrete}]
	The proof is similar to the proof of Theorem \ref{thm:continuous}. We again prove only the characterization of upper tail probabilities. A symmetric argument can be used to prove the characterization of lower tail probabilities.
	
	Suppose there exists a positive real $\alpha\in\Omega^\circ$ such that $\zeta(A(\alpha))=0$, and define $\Omega_{0,\alpha}\coloneqq\{z\in\mathbb C:0<\Re(z)<\alpha\}$. Arguments given at the beginning of the proof of Theorem \ref{thm:continuous} establish that $\Omega_{0,\alpha}\subseteq\Omega^\circ$, and that $A(z)^{-1}$ is meromorphic on $\Omega^\circ$ if $A(z)$ is nonsingular somewhere on $\Omega^\circ$. By applying Lemma \ref{lem:Nussbaum} with $F(s)=\Pi\odot\Upsilon\odot\Phi(s)$ and $G(s)=-I$, we find that $\zeta(A(s))$ is a convex function of real $s\in\Omega$. Therefore, since $\zeta(A(0))<0$ by Lemma \ref{lem:Tfinite-discrete}, and $\zeta(A(\alpha))=0$, we must have $\zeta(A(s))<0$ for all $s\in(0,\alpha)$. Thus $A(z)^{-1}$ is meromorphic on $\Omega^\circ$, and we may define a function $f(z)$ meromorphic on $\Omega^\circ$ by setting
	\begin{equation*}
		f(z)=\varpi^\top A(z)^{-1}1_N.
	\end{equation*}
	
	We apply part \ref{en:2} of Theorem \ref{thm:main} to $f(z)$, with $v=\varpi$ and $w=1_N$. If the assumptions of Theorem \ref{thm:main} are satisfied then it asserts that if there exists a positive length chain of classes $(\gamma_1,\dots,\gamma_\ell)$ of $A(\alpha)$ such that $\max_{m\in\gamma_1}\varpi_m>0$ then $\alpha$ is a pole of $f(z)$ with order equal to the length of the longest such chain. The initial classes of $A(\alpha)$ are the same as those of $\Pi\odot\Upsilon$ by the second assertion of Lemma \ref{lem:samechains-discrete}, so condition \ref{en:initial-discrete} in Assumption \ref{ass:discrete} makes the restriction to chains for which $\max_{m\in\gamma_1}\varpi_m>0$ redundant. Thus if the assumptions of Theorem \ref{thm:main} are satisfied then $\alpha$ is a pole of $f(z)$ with order equal to $d_\alpha$, the length of the longest chain of classes of $A(\alpha)$. We may verify these assumptions in the same way as in the proof of Theorem \ref{thm:continuous}, but using Lemma \ref{lem:samechains-discrete} in place of Lemma \ref{lem:samechains-continuous}, using Lemma \ref{lem:Tfinite-discrete} in place of Lemma \ref{lem:Tfinite-continuous}, and applying Lemma \ref{lem:Nussbaum} with $F(s)=\Pi_{\gamma,\gamma}\odot\Upsilon_{\gamma,\gamma}\odot\Phi_{\gamma,\gamma}(s)$ and $G(s)=-I$.
	
	Let $\varphi$ denote the Laplace transform for $W_0$, and let $\Omega_\varphi$ denote its domain, i.e.\
	\begin{equation*}
		\Omega_\varphi\coloneqq\big\{z\in\mathbb C:\E\big(\e^{\Re(z)W_0}\big)<\infty\big\}.
	\end{equation*}
	Let $\varphi^\circ$ denote the restriction of $\varphi$ to $\Omega_\varphi^\circ$, a holomorphic function. Recalling that $\zeta(A(s))<0$ for all $s\in(0,\alpha)$, we deduce from Lemma \ref{lem:MGF-discrete} that $\Omega_{0,\alpha}\subseteq\Omega_\varphi^\circ$ and that $\varphi^\circ$ coincides with $-\Pr(V_0=1)f$ on $\Omega_{0,\alpha}$. We know that $\Pr(V_0=1)>0$ because $(V_t)_{t\in\mathbb N_0}$ is stationary and condition \ref{en:final-discrete} in Assumption \ref{ass:discrete} implies that $V_t=1$ infinitely often with probability one. Therefore, since $f$ is meromorphic on an open neighborhood of $\alpha$ with a pole of order $d_\alpha$ at $\alpha$, it follows that $\alpha$ is the right abscissa of convergence of $\varphi$, and that $\varphi^\circ$ can be meromorphically extended to an open set containing $\alpha$ in such a way that $\alpha$ is a pole of order $d_\alpha$ of this extension. Thus, by applying Theorem \ref{thm:tauberian}, we obtain the asserted characterization of upper tail probabilities.
\end{proof}


\begin{thebibliography}{99}

\bibitem{Asmussen2003}
{\sc Asmussen, S.} (2003). {\em Applied Probability and Queues}, 2nd~edn. Springer, New York.

\bibitem{BeareSeoToda2022}
{\sc Beare, B.~K., Seo, W.-K. and Toda, A.~A.} (2022). Tail behavior of stopped L\'{e}vy processes with Markov modulation. {\em Econometric Theory} {\bf 38}, 986--1013.

\bibitem{BeareSeoToda2025}
{\sc Beare, B.~K., Seo, W.-K. and Toda, A.~A.} (2025). Tail behavior of stopped L\'{e}vy processes with Markov modulation---corrigendum. {\em Econometric Theory} {\bf 41}, 1244--47.

\bibitem{BeareToda2020}
{\sc Beare, B.~K. and Toda, A.~A.} (2020). On the emergence of a power law in the distribution of COVID-19 cases. {\em Phys. D.} {\bf 412}, 132649.

\bibitem{BeareToda2022}
{\sc Beare, B.~K. and Toda, A.~A.} (2022). Determination of Pareto exponents in economic models driven by Markov multiplicative processes. {\em Econometrica} {\bf 90}, 1811--33.

\bibitem{BeareToda2025}
{\sc Beare, B.~K. and Toda, A.~A.} (2025). Optimal taxation and the Domar-Musgrave effect. {\em Economic Inquiry} {\bf 63}, 1170--200.

\bibitem{BermanPlemmons1994}
{\sc Berman, A. and Plemmons, R.~J.} (1994). {\em Nonnegative Matrices in the Mathematical Sciences}. Society for Industrial and Applied Mathematics, Philadelphia.

\bibitem{Beyn2012}
{\sc Beyn, W.~J.} (2012). An integral method for solving nonlinear eigenvalue problems. {\em Linear Algebra Appl.} {\bf 436}, 3839--63.

\bibitem{Blanchard1985}
{\sc Blanchard, O.~J.} (1985). Debt, deficits, and finite horizons. {\em Journal of Political Economy} {\bf 93}, 223--47.

\bibitem{CampbellDaners2013}
{\sc Campbell, A.~P. and Daners, D.} (2013). Linear algebra via complex analysis. {\em Amer. Math. Monthly} {\bf 120}, 877--92.

\bibitem{Deutsch1975}
{\sc Deutsch, E.} (1975). The spectral abscissa of partitioned matrices. {\em J. Math. Anal. Appl.} {\bf 50}, 66--73.

\bibitem{DeutschNeumann1984}
{\sc Deutsch, E. and Neumann, M.} (1984). Derivatives of the Perron root at an essentially nonnegative matrix and the group inverse of an $M$-matrix. {\em J. Math. Anal. Appl.} {\bf 102}, 1--29.

\bibitem{DeutschNeumann1985}
{\sc Deutsch, E. and Neumann, M.} (1985). On the first and second order derivatives of the Perron vector. {\em Linear Algebra Appl.} {\bf 71}, 57--76.

\bibitem{FackrellHeTaylorZhang2010}
{\sc Fackrell, M., He, Q.-M., Taylor, P. and Zhang, H.} (2010). The algebraic degree of phase-type distributions. {\em J. Appl. Prob.} {\bf 47}, 611--29.

\bibitem{GohbergLeiterer2009}
{\sc Gohberg, I. and Leiterer, J.} (2009). {\em Holomorphic Operator Functions of One Variable and Applications}. Birkh\"{a}user, Basel.

\bibitem{GomezGouinBonenfant2024}
{\sc Gomez, M. and Gouin-Bonenfant, \'{E}.} (2024). Wealth inequality in a low rate environment. {\em Econometrica} {\bf 92}, 201--46.

\bibitem{GouinBonenfant2022}
{\sc Gouin-Bonenfant, \'{E}.} (2022). Productivity dispersion, between-firm competition, and the labor share. {\em Econometrica} {\bf 90}, 2755--93.

\bibitem{GouinBonenfantToda2023}
{\sc Gouin-Bonenfant, \'{E}. and Toda, A.~A.} (2023). Pareto extrapolation: An analytical framework for studying tail inequality. {\em  Quant. Econ.} {\bf 14}, 201--33.

\bibitem{GrahamVaaler1981}
{\sc Graham, S.~W. and Vaaler, J.~D.} (1981). A class of extremal functions for the Fourier transform. {\em Trans. Amer. Math. Soc.} {\bf 265}, 283--302.

\bibitem{Korevaar2004}
{\sc Korevaar, J.} (2004). {\em Tauberian Theory: A Century of Developments}. Springer, New York.

\bibitem{Nakagawa2004}
{\sc Nakagawa, K.} (2004). On the exponential decay rate of the tail of a discrete probability distribution. {\em Stoch. Models} {\bf 20}, 31--42.

\bibitem{Nakagawa2005}
{\sc Nakagawa, K.} (2005). Tail probability of random variable and Laplace transform. {\em Appl. Anal.} {\bf 84}, 499--522.

\bibitem{Nakagawa2007}
{\sc Nakagawa, K.} (2007). Application of Tauberian theorem to the exponential decay of the tail probability of a random variable. {\em IEEE Trans. Inform. Theory} {\bf 53}, 3239--49.

\bibitem{Nakagawa2015}
{\sc Nakagawa, K.} (2015). Tail probability and singularity of Laplace-Stieltjes transform of a Pareto type random variable. {\em Appl. Math.} {\bf 60}, 157--84.

\bibitem{Neuts1975}
{\sc Neuts, M.~F.} (1975). Probability distributions of phase-type. In {\em Liber Amicorum Prof. Emeritus H. Florin.} 173--206. Dept. Mathematics, Univ. Louvain, Belgium.

\bibitem{Neuts1981}
{\sc Neuts, M.~F.} (1981). {\em Matrix-Geometric Solutions in Stochastic Models: An Algorithmic Approach.} Johns Hopkins Univ. Press, Baltimore.

\bibitem{Nussbaum1986}
{\sc Nussbaum, R.~D.} (1986). Convexity and log convexity for the spectral radius. {\em Linear Algebra Appl.} {\bf 73}, 59--122.

\bibitem{OCinneide1990}
{\sc O'Cinneide, C.~A.} (1990). Characterization of phase-type distributions. {\em Commun. Statist. Stoch. Models} {\bf 6}, 1--57.

\bibitem{OCinneide1991}
{\sc O'Cinneide, C.~A.} (1991). Phase-type distributions and majorization. {\em Ann. Appl. Probab.} {\bf 1}, 219--27.


\bibitem{Ortega1972}
{\sc Ortega, J.~M.} (1990). {\em Numerical Analysis: A Second Course.} Society for Industrial and Applied Mathematics, Philadelphia.

\bibitem{Reed2001}
{\sc Reed, W.~J.} (2001). The Pareto, Zipf and other power laws. {\em Econom. Lett.} {\bf 74}, 15--9.

\bibitem{Rothblum1975}
{\sc Rothblum, U.} (1975). Algebraic eigenspaces of nonnegative matrices. {\em Linear Algebra Appl.} {\bf 12}, 281--92.

\bibitem{Schumacher1986}
{\sc Schumacher, J.~M.} (1986). Residue formulas for meromorphic matrices. In {\em Computational and Combinatorial Methods in Systems Theory}, eds C.~I. Byrnes and A. Lindquist, pp.~97--111, North-Holland, Amsterdam.

\bibitem{Schumacher2026}
{\sc Schumacher, J.~M.} (2026). Keldysh's theorem revisited. {\em Linear Algebra Appl.} {\bf 730}, 358--86.

\bibitem{Smith1995}
{\sc Smith, H.~L.} (1995). {\em Monotone Dynamical Systems: An Introduction to the Theory of Competitive and Cooperative Systems}. American Mathematical Society, Providence.

\bibitem{Steinberg1968}
{\sc Steinberg, S.} (1968). Meromorphic families of compact operators. {\em Arch. Ration. Mech. Anal.} {\bf 31}, 372--79.

\bibitem{Tam2001}
{\sc Tam, B.-S.} (2001). A cone-theoretic approach to the spectral theory of positive linear operators: The finite-dimensional case. {\em Taiwanese J. Math.} {\bf 5}, 207--77.

\bibitem{Widder1941}
{\sc Widder, D.~V.} (1941). {\em The Laplace Transform}. Princeton University Press, Princeton.

\bibitem{Yaari1965}
{\sc Yaari, M.~E.} (1965). Uncertain lifetime, life insurance, and the theory of the consumer. {\em Rev. Econ. Stud.} {\bf 32}, 137--50.

\end{thebibliography}
\end{document}